\newtheorem{theorem}{Theorem}[section]
\newtheorem{lemma}[theorem]{Lemma}
\newtheorem{proposition}[theorem]{Proposition}
\newtheorem{remark}[theorem]{Remark}
\newtheorem{corollary}[theorem]{Corollary}
\numberwithin{equation}{section}
\begin{document}
%Harnack estimates for PME and FDE under the Ricci flow
\title[]{ Aronson-B\'enilan estimates for the fast diffusion equation under the Ricci flow}

\author{Huai-Dong Cao and Meng Zhu}

\address{Huai-Dong Cao \\ Department of Mathematics\\Lehigh University \\Bethlehem, PA 18015, USA} \email{huc2@lehigh.edu}

\address{Meng Zhu\\ Department of Mathematics\\East China Normal University\\ Shanghai 200241, China \& Department of Mathematics\\University of California at Riverside\\Riverside, CA 92521, USA}
\email{mzhu@math.ecnu.edu.cn, mengzhu@ucr.edu}
\date{}
\thanks{The research of the first author was partially supported by NSF Grant DMS-0909581.}
\thanks{The research of the second author was partially supported by the National Natural Science Foundation of China Grant No.11501206.}
%\thanks{Research of the second author is partially supported by China Postdoctral Science Foundation Grant No. 2013M531105}
%\email{mengzhu@fudan.edu.cn}
\maketitle

\begin{abstract}
We study the fast diffusion equation with a linear forcing term,
\begin{equation}\label{eq0}
\frac{\partial u}{\partial t}=\textrm{div}(|u|^{p-1}\nabla u)+Ru,
\end{equation}
under the Ricci flow on complete manifold $M$ with bounded curvature and nonnegative curvature operator, where $0<p<1$ and $R=R(x,t)$ is the evolving scalar curvature of $M$ at time $t$. We prove Aronson-B\'enilan and Li-Yau-Hamilton type differential Harnack estimates for positive solutions of \eqref{eq0}. In addition, we use similar method to prove certain Li-Yau-Hamilton estimates for the heat equation and conjugate heat equation which extend those obtained by X. Cao and R. Hamilton \cite{CaHa2009}, X. Cao \cite{xCao2008}, and S. Kuang and Q. Zhang \cite{KuZh2008} to noncompact setting.
\end{abstract}

\section{Introduction}

This is a sequel to our paper \cite{CaZh2014}, in which we derived Aronson-B\'enilan and Li-Yau-Hamilton type differential Harnack estimates for positive solutions of the porous medium equation
coupled with the Ricci flow on complete manifolds with bounded curvature and nonnegative curvature operator. In this paper, we study similar problems for the fast diffusion equation under the Ricci flow.

On a Riemannian manifold $(M^n, g_{ij})$, the fast diffusion equation (FDE) is the nonlinear parabolic equation
\begin{equation}\label{eq1}
\frac{\partial u}{\partial t}=\textrm{div}(|u|^{p-1}\nabla u),
\end{equation}
where $p<1$. Equation \eqref{eq1} is also known as the porous medium equation (PME) when $p>1$. PME and FDE appear naturally as nonlinear diffusion models in many areas of mathematics, physics, chemistry, and engineering. For example, the PME is the model of the flow of gas through porous medium, ground water filtration, and heat radiation in plasmas, etc (see e.g. \cite{Vaz2007}), while FDE arises in plasma physics (\cite{OkDa1973}), diffusion of impurities in silicon (\cite{King1988}), gas kinetics theory (\cite{LiTo1997}), etc. Moreover, in geometry, the FDE with $n=2$ and $p=0$ describes the Ricci flow on surfaces (\cite{Wu1993}, \cite{DHS2012}), and it corresponds to the Yamabe flow when $n\geq3$ and $p=\frac{n-2}{n+2}$ (\cite{Vaz2006}).

While both PME and FDE look quite similar to the heat equation, the case when $p=1$ in \eqref{eq1}, the nonlinearity and degeneracy make their existence and regularity theory differ a lot from the heat equation. On Euclidean spaces, a relatively satisfying theory for the PME has been developed over the past several decades. Readers may refer to \cite{Vaz2007} and the references therein for a thorough account of the PME. Regarding the FDE on $\mathbb{R}^n$, see e.g. \cite{ArBe1979}, \cite{HePi1985}, \cite{Dake1988}, \cite{DaPi1994}, \cite{Dapi1995}, \cite{Dapi1997}, \cite{Dapi1999}, \cite{Ber2001}, and \cite{Vaz2006}. While many properties and techniques in the PME case can be shared in the FDE case, the two equations have some fundamental differences. For instance, long time solution to the Cauchy problem of FDE with $0<p<1$ and $L^1_{loc}$ initial condition exists without any growth restriction on the initial value (\cite{HePi1985}), solutions of FDE with $p<0$ and $L^1$ initial data (see e.g. \cite{Vaz1992}) do not exist, and the $L^1$ to $L^{\infty}$ smoothing effect can only be carried from PME to FDE for $\frac{n-2}{n}<p<1$ (see e.g. \cite{Vaz2006}).

If one concentrates on nonnegative solutions of \eqref{eq1} with $p>0$, it is equivalent to the following equation:
\begin{equation}\label{eq FDE}
\frac{\partial u}{\partial t}=\Delta u^p.
\end{equation}

In the study of Cauchy problems of PME and FDE on $\mathbb{R}^n$, an essential tool is the following second order differential inequality discovered by Aronson and B\'enilan (\cite{ArBe1979}):
\begin{equation}
\sum_i\frac{\partial}{\partial x_i}(pu^{p-2}\frac{\partial u}{\partial x_i})\geq-\frac{\kappa}{t},
\end{equation}
where $\kappa=\frac{n}{2+n(p-1)}$, for any positive solution $u$ to \eqref{eq FDE} with $p>(1-\frac{2}{n})^+$.

On a general complete Riemannian manifold $(M^n, g_{ij})$ with Ricci curvature bounded from below, $Rc\geq -K$ for some $K\geq0$, such kind of differential estimate was first found by Li and Yau (\cite{LiYa1986}). Subsequently, the matrix differential Harnack estimate for the heat equation was proved by Hamilton (\cite{Ham1993cag}) on Riemannian manifolds, and by Ni and the first author (\cite{CaNi2005}) on K\"ahler manifolds.

Unlike the heat equation, PME and FDE on Riemannian manifolds were only investigated recently. Aronson-B\'enilan type estimates were first proved by V\'azquez (\cite{Vaz2007}) on manifolds with nonnegative Ricci curvature, and generalized by Lu, Ni, V\'azquez and Villani (\cite{LNVV2009}) to manifolds with Ricci curvature bounded from below. In particular, they showed that, if $Rc\geq-(n-1)K$ on $M$ for some $K\geq0$, then for any $\alpha\in(0,1)$, $\epsilon>0$ and bounded smooth positive solution $u$ to FDE, one has
\begin{equation}\label{eqNi}
\alpha\frac{v_t}{v}-\frac{|\nabla v|^2}{v}\leq \frac{\kappa\alpha^2}{C}\left[\frac{1}{t}+\frac{(1-p)(n-1)}{(1-\alpha)\epsilon}\sqrt{C}K\bar{v}_{max} \right],
\end{equation}
where $v=\frac{p}{p-1}u^{p-1}$, $\kappa=\frac{n(1-p)}{2-n(1-p)}$, $\bar{v}_{max}=\max_{M\times[0,T]}v$, and $C=C(\kappa, \alpha,\epsilon)>0$.

The study of differential Harnack estimates for parabolic equations under the Ricci flow
\begin{equation}\label{rf}
\frac{\partial g_{ij}}{\partial t}=-2R_{ij}
\end{equation}
on complete manifolds was originated from Hamilton's work (\cite{Ham1988}). The readers may refer to e.g. \cite{Cao1992}, \cite{Cho1991}, \cite{Ham1993jdg}, \cite{Ni2006}, \cite{Ni2007}, \cite{Per2002a} for more results on differential Harnack estimates for linear and semi-linear parabolic equations under the Ricci flow and the K\"ahler-Ricci flow.

In recent years, among the parabolic equations coupled with the Ricci flow, the heat equation
\begin{equation}\label{heat equation1}
\frac{\partial u}{\partial t}=\Delta u,
\end{equation}
and the conjugate heat equation
\begin{equation}\label{conjugate heat equation1}
\frac{\partial u}{\partial t}=-\Delta u+ Ru
\end{equation}
are two of the most studied.

In \cite{Per2002a}, Perelman first obtained a Li-Yau type estimate for the fundamental solution of the conjugate heat equation \eqref{conjugate heat equation1} (see also \cite{Ni2006}). On compact manifolds, X. Cao \cite{xCao2008} and Kuang-Zhang \cite{KuZh2008} independently showed the following Li-Yau estimate,
\begin{equation}\label{kuang-zhang}
2\Delta v-|\nabla v|^2+R\leq \frac{n}{2\tau},
\end{equation}
where $u=(4\pi\tau)^{-n/2}e^{-v}$, for arbitrary positive solutions $u$ of \eqref{conjugate heat equation1}.

Regarding the heat equation \eqref{heat equation1} under the Ricci flow, Li-Yau type estimates were first studied by Bailesteanu-X.Cao-Pulemotov \cite{BCP} assuming the Ricci curvature being uniformly bounded. Most recently, Q. Zhang and the second author \cite{qZZ}  were able to show a Li-Yau type estimate only under the assumption that the scalar curvature is uniformly bounded.

Moreover, By using Hamilton's trace differential Harnack estimate \cite{Ham1993jdg}, X. Cao and Hamilton \cite{CaHa2009} proved the Li-Yau-Hamilton type estimate
\begin{equation}\label{Cao-Hamilton}
2\Delta v-|\nabla v|^2-3R\leq \frac{2n}{t},
\end{equation}
where $v=-\ln u$ and $u$ is any positive solution to the heat type equation
\begin{equation}\label{conjugate equation}\frac{\partial u}{\partial t}=\Delta u+Ru\end{equation}
under the Ricci flow on a compact manifold with nonnegative curvature operator.

In \cite{CaZh2014}, we studied PME \eqref{eq FDE} with a linear forcing term,
\begin{equation}\label{eq2}
\frac{\partial u}{\partial t}=\Delta u^{p}+ Ru,
\end{equation}
where $p>1$, under the Ricci flow on a complete manifold $M^n$ with bounded curvature and nonnegative curvature operator and proved Aronson-B\'enilan and Li-Yau-Hamilton type differential Harnack estimates for any bounded positive solution $u(x,t)$. Especially, we showed
$$\frac{|\nabla v|^2}{v}-2\frac{v_t}{v}-\frac{R}{v}-\frac{d}{t}\leq 0,$$
where $v=\frac{p}{p-1}u^{p-1}$, $d=\max\{2\alpha, 1\}$ and $\alpha=\frac{n(p-1)}{1+n(p-1)}$. The reason for adding the term $Ru$ in \eqref{eq2} is that the total mass $\int_M u dV$ would stay unchanged under \eqref{eq2} coupled with the Ricci flow, as it does under \eqref{eq FDE} in the fixed metric case.

In this paper, we consider the equation \eqref{eq2} with $0<p<1$, i.e., the FDE case. We prove that certain Aronson-B\'enilan and Li-Yau-Hamilton type estimates as in \cite{CaZh2014} still hold. In particular, we have

\begin{theorem}
Let $(M^n, g_{ij}(t))$, $t\in[0, T]$, be a complete solution to the Ricci flow with bounded curvature and nonnegative curvature operator. If $u$ is a bounded smooth positive solution to \eqref{eq2} with $p\in(1-\frac{1}{n},1)$, then for $v=\frac{p}{p-1}u^{p-1}$, we have
$$\frac{|\nabla v|^2}{v}-2\frac{v_t}{v}-\frac{R}{v}+ KR_{max}+\frac{2\alpha}{t}\geq 0$$
on $M\times(0,T]$, where $\alpha=\frac{n(1-p)}{1-n(1-p)}$, $R_{max}=\sup_{M\times[0,T]}R$, and $K=(1-p) \max\{(\alpha+\frac{1}{2}),\,  2\alpha\}.$
\end{theorem}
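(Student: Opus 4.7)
The plan is to work with the pressure function $v = \frac{p}{p-1}u^{p-1}$, which for $0<p<1$ is everywhere negative and, under \eqref{eq2} coupled to the Ricci flow, satisfies
\begin{equation*}
v_t = (p-1)v\,\Delta v + |\nabla v|^2 + (p-1)Rv.
\end{equation*}
Eliminating $v_t$ from the inequality in the theorem, the claim becomes
\begin{equation*}
P := -\frac{|\nabla v|^2}{v} + 2(1-p)\Delta v + 2(1-p)R - \frac{R}{v} \;\geq\; -\,K R_{max} - \frac{2\alpha}{t}
\end{equation*}
on $M\times(0,T]$. I would prove this via a parabolic maximum-principle argument applied to $P$ against the natural linearized operator $\mathcal{L} := (p-1)v\,\Delta + 2\langle\nabla v,\nabla\cdot\rangle$, which is elliptic because $(p-1)v>0$. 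The overall blueprint follows the companion PME paper \cite{CaZh2014}, but every sign must be retraced since here $p-1<0$ and $v<0$.

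\textbf{Step 1 (evolution of $P$).} Compute $(\partial_t - \mathcal{L})P$ term by term, using the Bochner formula $\Delta|\nabla v|^2 = 2|\nabla^2 v|^2 + 2\langle\nabla v,\nabla\Delta v\rangle + 2 Rc(\nabla v,\nabla v)$, the Ricci-flow identities $\partial_t R = \Delta R + 2|Rc|^2$ and $\partial_t g_{ij} = -2R_{ij}$, and the resulting commutator $[\partial_t,\Delta]$. After collecting terms the output should take the schematic form
\begin{equation*}
(\partial_t-\mathcal{L})P \;\geq\; \tfrac{2(1-p)}{n}(-v)(\Delta v)^2 + [\text{nonneg.\ curvature terms}] + [\text{controlled scalar-curvature terms}],
\end{equation*}
where the curvature terms (including $2(1-p)(-v)^{-1}Rc(\nabla v,\nabla v)$ and $|Rc|^2$) are $\geq 0$ by the nonnegative curvature operator assumption, and the residual scalar-curvature terms---in particular a $-R_t/v$ contribution coming from differentiating $-R/v$---are absorbed via Hamilton's trace Harnack $R_t + R/t \geq 0$ for Ricci flow with nonnegative curvature operator.

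\textbf{Step 2 (maximum principle; extracting $\alpha$ and $K$).} Argue by contradiction: assume $F := tP + tKR_{max} + 2\alpha$ is negative somewhere, and let $(x_0,t_0)$ with $t_0>0$ be an interior minimum, so $\nabla P = 0$, $\Delta P \geq 0$, and $\partial_t P \leq -P/t$. Substituting into Step 1, using $\nabla P = 0$ to eliminate the gradient of $|\nabla v|^2$, and completing squares reduces the problem to a scalar quadratic inequality in $\Delta v$. The value $\alpha = \frac{n(1-p)}{1-n(1-p)}$ is exactly the exponent for which the Cauchy--Schwarz term $\tfrac{2(1-p)}{n}(-v)(\Delta v)^2$ absorbs the deficit $-P/t$; the positivity $1-n(1-p)>0$ is precisely the reason for the hypothesis $p>1-1/n$. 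The remaining $R$-terms are dominated by $KR_{max}$ with $K = (1-p)\max\{\alpha+\tfrac{1}{2},\,2\alpha\}$, producing the contradiction. On complete noncompact $M$ one localizes via a spatial cutoff function $\eta$ supported in large geodesic balls, using the distance-distortion estimates under bounded-curvature Ricci flow to control $\partial_t\eta$, $|\nabla\eta|$, $|\Delta\eta|$, and then lets the radius tend to infinity; boundedness of $u$, $R$, and $v$ on $M\times[0,T]$ ensures the cutoff error vanishes.

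\textbf{Main obstacle.} The substantive effort is Step 1: the $(\partial_t-\mathcal{L})P$ computation is lengthy and error-prone, and because in this regime $p-1<0$, $v<0$, and the estimate is a lower bound rather than the upper bound of the PME case, nearly every sign is reversed from \cite{CaZh2014}, so the computation must be redone carefully rather than merely quoted. The conceptual crux is the identification of $\alpha$ as the sharp exponent making a trace Cauchy--Schwarz inequality saturate, together with the use of Hamilton's Harnack estimate to convert the various scalar-curvature error terms into the single bound $KR_{max}$; both of these steps hinge essentially on the nonnegative curvature operator hypothesis.
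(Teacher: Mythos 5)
Your Step 1 is correct in spirit and matches Proposition 2.1 of the paper specialized to $b=2$, though one imprecision is worth flagging: what is needed is not merely the $V=0$ case $R_t + R/t \geq 0$ of Hamilton's trace Harnack, but the full one--form version $R_t + 2\nabla_i R\, V_i + 2R_{ij}V_iV_j + R/t \geq 0$, applied with $V_i = -\nabla_i v$, because differentiating $-R/v$ in $\mathcal{L}$ also generates $\nabla R\cdot\nabla v$ and $R_{ij}\nabla_i v\nabla_j v$ cross terms that must be packaged together with $R_t$ before the Harnack estimate can be invoked.

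The genuine gap is in Step 2. Your plan is a \emph{single}-stage localization: put a cutoff $\eta$ supported in a ball $B_t(O,R_0)$, find an interior minimum of $t\eta F$, derive a quadratic inequality, and let $R_0\to\infty$ so the cutoff errors vanish. That argument does yield a bound, but not the clean one claimed. After completing the square and invoking the quadratic formula, a $\sqrt{\,\cdot\,}$ correction of order $b\alpha$ survives \emph{independently} of $R_0$ (it comes from absorbing the $(b-1)\tfrac{R}{-v}$ and $|\nabla v|^2/v$ cross terms into the $\tilde y^2$ quadratic, not from the cutoff), so sending $R_0\to\infty$ only recovers the weaker Corollary~2.9-type estimate $F \geq -\tfrac{b\alpha}{t}\bigl[1+\tfrac12\sqrt{\cdot}+C R_{\max}\bigr]$, with leading coefficient strictly larger than $b\alpha/t$. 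The paper uses this weaker global bound only as a \emph{preliminary}: it establishes that $H = t(F+K)+b\alpha$ is bounded below, so that for Hamilton's distance-like function $f$ (Lemma~2.6) and $\psi = e^{At}f$, the perturbed quantity $\tilde H = H+\epsilon\psi$ is positive at $t=0$ and outside a compact set, allowing a first-touching-time argument. In that second stage there is no quadratic to solve: at the touching point $H = -\epsilon\psi$ exactly, the $\epsilon$-terms are dominated by choosing $A$ large, the non-$\epsilon$ terms are made nonnegative by the specific choice of $K$, and sending $\epsilon\to0$ removes the perturbation entirely, giving the sharp coefficient $b\alpha/t$. Without this two-stage structure---cutoff-based global estimate first, then refinement via $\epsilon\psi$---the stated constant cannot be obtained.
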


A more general version of the differential Harnack inequality depending on a positive parameter $b>1$ (see Theorem \ref{refined noncompact harnack p<1 b<2} and Theorem \ref{refined noncompact harnack p<1 b>2}) is given in Section 2, and Theorem 1.1 is the special case when $b=2$. It happens that the proof of these differential Harnack inequalities in the FDE case is more subtle than in the PME case, in the sense that
% the second last and the third last
some terms on the right hand side of Eq. \eqref{eq F} in Proposition 2.1 become bad terms in the FDE case, while they are good terms in the PME case. Thus, more effort was needed in order to have them controlled.

\begin{remark}
When the metric is fixed, Lu-Ni-V\'azquez-Villani \cite{LNVV2009} obtained a series of Aronson-B\'enilan estimates for FDE \eqref{eq1} in which the parameters $b\in(0,1)$ and $p\in(1-\frac{2}{n},1)$ (see \eqref{eqNi}). However, under the Ricci flow, an extra term (the second term on the right hand side of \eqref{eq F} in Proposition 2.1) appears due to the fact that the metrics are evolving. In order to make the maximum principle argument work, it is required that $b>1$.
Moreover, our method used in the proofs of Theorem \ref{refined noncompact harnack p<1 b<2} and Theorem \ref{refined noncompact harnack p<1 b>2} can also be applied to the FDE \eqref{eq1} in the fixed metric case to prove certain Aronson-B\'enilan estimates with $b>1$ and $p\in(1-\frac{2(b-1)}{bn},1)$.
\end{remark}

Furthermore,  we apply similar techniques used in the proof of Theorem 1.1 to extend the Li-Yau-Hamilton estimates \eqref{kuang-zhang} and \eqref{Cao-Hamilton}, for the conjugate heat equation and the heat equation under the Ricci flow respectively, to the complete noncompact setting.

\begin{theorem}\label{theorem conjugate heat equation}
Let $g(t)$, $t\in[0, T)$, be a complete solution to the Ricci flow on $M^n$ with bounded curvature and nonnegative scalar curvature. If $u$ is a positive solution to the conjugate heat equation \eqref{conjugate heat equation1}, then for $v=-\ln u-\frac{n}{2}\ln(4\pi(T-t))$, we have
$$2\Delta v-|\nabla v|^2+R\leq \frac{2n}{T-t}$$
on $M\times[0,T)$.

\end{theorem}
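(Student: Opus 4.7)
The plan is to adapt the maximum principle argument of X.~Cao \cite{xCao2008} and Kuang--Zhang \cite{KuZh2008} from the compact to the complete noncompact setting by localizing with a spatial cutoff built from the evolving distance function. Set $\tau = T - t$ and
\[
P := 2\Delta v - |\nabla v|^2 + R - \frac{2n}{\tau},
\]
so that the claim becomes $P \leq 0$ on $M \times [0, T)$. The definition of $v$ together with the conjugate heat equation yields $\partial_t v = -\Delta v + |\nabla v|^2 - R + \frac{n}{2\tau}$, and hence $\partial_t v - \Delta v = -P - \frac{3n}{2\tau}$, so that $\nabla(\partial_t v - \Delta v) = -\nabla P$ and $\Delta(\partial_t v - \Delta v) = -\Delta P$.

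Next I would compute $(\partial_t - \Delta)P$ using the Ricci flow commutators $\partial_t(\Delta v) = \Delta(\partial_t v) + 2R_{ij}\nabla^i\nabla^j v$ and $\partial_t|\nabla v|^2 = 2R_{ij}\nabla^i v \nabla^j v + 2\langle\nabla v, \nabla\partial_t v\rangle$, the evolution $\partial_t R = \Delta R + 2|Rc|^2$, and Bochner's identity. Many terms cancel, and after regrouping $4R_{ij}\nabla^i\nabla^j v + 2|Rc|^2 + 2|\nabla^2 v|^2 = 2|Rc + \nabla^2 v|^2$, one obtains
\[
(\partial_t + \Delta)P - 2\langle\nabla v, \nabla P\rangle = 2|Rc + \nabla^2 v|^2 - \frac{2n}{\tau^2}.
\]
Combining $|Rc + \nabla^2 v|^2 \geq \tfrac{1}{n}(R + \Delta v)^2$ with the algebraic identity $2(R + \Delta v) = P + \tfrac{2n}{\tau} + R + |\nabla v|^2$ and the hypothesis $R \geq 0$ gives $R + \Delta v \geq \tfrac{1}{2}(P + \tfrac{2n}{\tau})$ wherever $P \geq 0$, which leads to
\[
(\partial_t + \Delta)P - 2\langle\nabla v, \nabla P\rangle \;\geq\; \frac{P^2}{2n} + \frac{2P}{\tau}
\]
on $\{P \geq 0\}$. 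The constant $2n$ appearing in the definition of $P$ is precisely what is needed to cancel the $-2n/\tau^2$ error and retain a clean quadratic remainder.

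On a compact manifold the proof closes by ODE comparison: $M(t) := \max_M P(\cdot, t)$ satisfies $M' \geq M^2/(2n) + 2M/\tau$ wherever $M > 0$, and the substitution $N = \tau^2 M$ reduces this to $N' \geq N^2/(2n\tau^2)$, which forces blow-up of $N$ strictly before $\tau = 0$ if $M$ is ever positive, contradicting smoothness on $[0,T)$. The main obstacle in the complete noncompact case is that the spatial supremum of $P$ need not be attained, and neither $|\nabla v|$, $\Delta v$, nor $P$ is known a priori to be bounded. To overcome this I would apply the maximum principle to $\phi P$, where $\phi(x,t) = \eta(d_t(x_0, x)/A)$ with $\eta$ a standard smooth decreasing bump and $d_t$ the Riemannian distance at time $t$. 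The bounded-curvature hypothesis is essential here: Hamilton's distance-distortion estimate under Ricci flow controls $|\partial_t d_t|$, and the Laplacian comparison theorem (applied in the support-function sense) controls $\Delta d_t$, so that $|\nabla\phi|$, $\Delta\phi$, and $|\partial_t\phi|$ are all of order $1/A$. At an interior maximum of $\phi P$, the relation $\phi\nabla P = -P\nabla\phi$ allows the drift $-2\langle\nabla v,\nabla P\rangle$ to be absorbed into a small multiple of the good term $\phi P^2/(2n)$ via the Peter--Paul inequality; the cutoff errors vanish as $A \to \infty$, reducing to the compact ODE argument and yielding $P \leq 0$ on $M \times [0,T)$. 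Nonnegativity of $R$ is used both to secure the coercive algebraic bound above and to prevent pathological growth of $v$ at infinity.
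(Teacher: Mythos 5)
Your evolution equation for $P=2\Delta v-|\nabla v|^2+R-\tfrac{2n}{\tau}$ is correct: it matches the paper's Lemma 3.1 specialized to $b=-2$, $c=-1$, $d=0$ (up to the harmless shift by $\tfrac{n}{\tau}$ versus $\tfrac{2n}{\tau}$), and the pointwise inequality $(\partial_t+\Delta)P-2\langle\nabla v,\nabla P\rangle\geq \tfrac{P^2}{2n}+\tfrac{2P}{\tau}$ on $\{P\geq0\}$ follows exactly as you say. The difficulty is entirely in the localization, and there your plan as stated does not close.

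First, in passing from $R+\Delta v=\tfrac12\bigl(P+\tfrac{2n}{\tau}+R+|\nabla v|^2\bigr)$ to $R+\Delta v\geq\tfrac12\bigl(P+\tfrac{2n}{\tau}\bigr)$ you discard $R+|\nabla v|^2$, and this is precisely the quantity you need later. At an interior maximum of $\phi P$ the identity $\phi\nabla P=-P\nabla\phi$ turns the drift into $2P\,\nabla v\cdot\nabla\phi$; Peter--Paul against the good term $\phi P^2/(2n)$ leaves a remainder proportional to $|\nabla v|^2\,|\nabla\phi|^2/\phi\sim |\nabla v|^2/A^2$, which you cannot send to zero as $A\to\infty$ without an a priori bound on $|\nabla v|$ --- something you yourself note is unavailable. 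The paper avoids this by keeping the full square in the form $-\tfrac{1}{2n}(\tilde H+\tau y)^2$ with $y=|\nabla v|^2+R$, so that the cross term $-\tfrac{1}{n}\tilde H\,\tau y$ (quartic in $|\nabla v|$) is available to absorb the $|\nabla v|$ contribution from the cutoff. Your one-line inequality has no such reserve.

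Second, the claim that ``the cutoff errors vanish as $A\to\infty$'' is too strong for the time-dependent cutoff $\phi(x,t)=\eta(d_t(x_0,x)/A)$. Hamilton's distance-distortion estimate gives $|\partial_t d_t|\lesssim R_{\max}\,d_t$, so on the support of $\phi$ (where $d_t\lesssim A$) one only gets $|\partial_t\phi|\lesssim R_{\max}$, with no gain in $A$. This is exactly why the paper's Proposition 3.3 and Corollary 3.4 yield $H\leq 2n\bigl(\tfrac1\tau+16R_{\max}\bigr)$ rather than the sharp bound; the $R_{\max}$ error is irremovable at this stage. The sharp constant is then recovered in a separate step: perturb by $\epsilon\,e^{A\tau}f$, where $f$ is Hamilton's time-independent distance-like function of Lemma 2.4 with uniform bounds $|\nabla f|,|\Delta f|\leq C$ under all the metrics $g(\tau)$, so the time-derivative issue disappears and one can push $\epsilon\to0$. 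Your proposal collapses the paper's two stages into one and thereby loses both the control of $|\nabla v|$ and the removal of the $R_{\max}$ error. To repair it you would need either (i) to retain the $|\nabla v|^2$ term so that the quartic piece absorbs the drift, plus (ii) a separate refinement using a time-independent auxiliary function, which is exactly the paper's route.
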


\begin{theorem}\label{theorem heat equation}
Let $(M^n, g_{ij}(t))$, $t\in[0, T)$, be a complete solution to the Ricci flow with bounded curvature and nonnegative curvature operator. If $u$ is a positive solution to the heat equation \eqref{conjugate equation} and $v=-\ln u$, then we have
\begin{align}\label{eq noncompact harnack2 p=1}
2\Delta v-|\nabla v|^2-3R\leq \frac{2n}{t}
\end{align}
on $M\times(0, T)$.
\end{theorem}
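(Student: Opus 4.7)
The plan is to run a maximum-principle argument on the time-weighted quantity
\[
H := t\bigl(2\Delta v - |\nabla v|^2 - 3R\bigr) - 2n, \qquad v = -\ln u,
\]
with the goal of showing $H \leq 0$. From $\partial_t u = \Delta u + Ru$ one gets $v_t = \Delta v - |\nabla v|^2 - R$, so the quantity $F := 2\Delta v - |\nabla v|^2 - 3R$ coincides with $\Delta v + v_t - 2R$. Computing $(\partial_t - \Delta)F$ along the Ricci flow uses the commutator $[\partial_t,\Delta]f = 2R^{ij}\nabla_i \nabla_j f + (\text{lower order})$, the evolution $\partial_t R = \Delta R + 2|\mrm{Ric}|^2$, and Bochner applied to $|\nabla v|^2$. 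After regrouping, the resulting identity carries the distinguished combination $\partial_t R - 2\,\nabla R \cdot \nabla v + 2\,\mrm{Ric}(\nabla v,\nabla v)$, which is controlled below by Hamilton's trace Li--Yau--Hamilton estimate
\[
\partial_t R + \frac{R}{t} - 2\,\nabla R \cdot \nabla v + 2\,\mrm{Ric}(\nabla v,\nabla v) \;\geq\; 0,
\]
valid on complete manifolds with bounded curvature and nonnegative curvature operator. Together with the pointwise bound $|\nabla^2 v + \mrm{Ric}|^2 \geq \tfrac{1}{n}(\Delta v + R)^2$ and the nonnegativity of the remaining curvature squares, this gives a parabolic inequality of the schematic form
\[
(\partial_t - \Delta) H \;\leq\; -2\,\nabla v \cdot \nabla H \;+\; \frac{H}{t} \;-\; \frac{2t}{n}(\Delta v + R)^2,
\]
which forces a contradiction at any first interior point where $H > 0$.

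On compact $M$ the maximum principle immediately gives $H \leq 0$ since $H \to -2n$ as $t \to 0^+$. For the complete noncompact case, carry out a Shi-type spatial localization. Fix $x_0 \in M$ and let $\phi(x,t) = \eta\bigl(r_{g(t)}(x_0,x)/\rho\bigr)$ with $\eta$ a standard nonincreasing cut-off supported on $[0,2]$ and equal to $1$ on $[0,1]$. Bounded curvature together with the Hessian comparison and Perelman's bound $(\partial_t - \Delta)r \geq -C$ yields $(\partial_t - \Delta)\phi \geq -C_1/\rho^2 - C_2/\rho$ on the support of $\phi$. Applying the maximum principle to $\phi H$ (with a standard auxiliary $\varepsilon$-perturbation to enforce an interior first-positive point), and using $\phi \nabla H = -H\,\nabla \phi$ at such a point, the transport term $-2\nabla v \cdot \nabla H$ becomes $2H\,\nabla v \cdot \nabla \phi/\phi$ and is absorbed, via Cauchy--Schwarz, against the negative $(\Delta v + R)^2$ penalty and the $-|\nabla v|^2$ content inside $H$. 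Sending $\rho \to \infty$ and $\varepsilon \to 0$ concludes $H \leq 0$ on $M \times (0,T)$.

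The main obstacle is the noncompact localization: since $v$ and $|\nabla v|$ are not a priori globally bounded, the cut-off error $2H\,\nabla v \cdot \nabla \phi / \phi$ must be reabsorbed using the quadratic terms intrinsic to $H$. The hypotheses enter in three essential places: the nonnegative curvature operator renders the Bochner curvature squares nonnegative and validates Hamilton's LYH estimate, while bounded curvature supports the Hessian comparison and Perelman's distance estimate underlying the cut-off.
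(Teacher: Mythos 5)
The overall plan (compute $(\partial_t-\Delta)F$ for $F=2\Delta v-|\nabla v|^2-3R$, invoke Hamilton's trace Li--Yau--Hamilton estimate to control the curvature combination, complete a square to obtain $-2|\nabla^2v-\mrm{Ric}|^2\le -\tfrac{2}{n}(\Delta v-R)^2$, and then apply the parabolic maximum principle to a time-weighted quantity) is the same computation that underlies Proposition 4.1 and the case $b=2$ of the paper. There are two minor slips: the completed square is $|\nabla^2v-\mrm{Ric}|^2$ and the corresponding trace term is $(\Delta v-R)^2$, not $|\nabla^2v+\mrm{Ric}|^2$ and $(\Delta v+R)^2$, and for $b=2$ the "remaining curvature squares'' $\tfrac{(b-2)^2}{2}|\mrm{Ric}|^2$ vanish identically, so there is nothing to discard there.

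The genuine gap is in the noncompact localization. Your cut-off $\phi(x,t)=\eta(r_{g(t)}(x_0,x)/\rho)$ is built from the evolving distance, so $\partial_t\phi=\eta'\cdot\dfrac{\partial_t r}{2\rho}$, and $\partial_t r$ is comparable to $r$ (it is $-\int_\gamma\mrm{Ric}(\gamma',\gamma')$). On the annulus where $\eta'\ne 0$ one only gets $|\partial_t\phi|\le CR_{\max}$, a bound that does not vanish as $\rho\to\infty$; consequently the claim $(\partial_t-\Delta)\phi\ge -C_1/\rho^2-C_2/\rho$ fails, and a single pass with this cut-off only yields $2\Delta v-|\nabla v|^2-3R\le \tfrac{2n}{t}+C(R_{\max})$, exactly as in the paper's Proposition \ref{local harnack p=1} and Corollary \ref{noncompact harnack p=1}. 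To remove the $R_{\max}$-size error the paper performs a second, different step: it perturbs by $\epsilon\psi$ with $\psi=e^{At}f$, where $f$ is the \emph{time-independent} distance-like function of Lemma \ref{distance function} with $|\nabla f|_{g(t)}\le C$ and $|\nabla\nabla f|_{g(t)}\le C$ uniformly in $t$ (so $\partial_t\psi$ contributes the favorable term $A\epsilon\psi$), and it uses the preliminary crude global bound from Corollary \ref{noncompact harnack p=1} precisely to guarantee $\tilde H<0$ outside a compact set before applying the maximum principle. Your sketch collapses these two steps into one and lets $\rho\to\infty$ immediately; as written this does not produce the sharp constant $2n/t$, and you would need to add the refinement with Hamilton's distance-like function (or an equivalent device) to close the argument.
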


%\noindent {\bf Remark 1.4} (i) When $M$ is compact with nonnegative curvature operator, Theorem 1.2 was first proved by X. Cao and R. Hamilton (\cite{CaHa2009}). %Thus, Theorem \ref{noncompact harnack p=1 b=2} extends their result to the noncompact setting;

%(ii) Theorem 1.3 extends the Li-Yau type estimate \eqref{kuang-zhang} obtained by X. Cao \cite{xCao2008} and Kuang-Zhang \cite{KuZh2008} to the complete noncompact setting.

The proofs of Theorem \ref{theorem conjugate heat equation} and Theorem \ref{theorem heat equation} are given in Section 3 and Section 4, respectively.\\

\hspace{-.4cm}\textbf{Ackownledgements}:  Part of the work was carried out while the first author was visiting the University of Macau, where he was partially supported by Science and Technology Development Fund (Macao S.A.R.) Grant FDCT/016/2013/A1, as well as the Projects RDG010 and MYRG2015-00235-FST of University of Macau . The second author would like to thank Professors Qing Ding, Jixiang Fu, Jiaxing Hong, Jun Li, Quanshui Wu, Qi S. Zhang and Weiping Zhang for their support and encouragement. The second author also wants to thank the Shanghai Center for Mathematical Sciences, where part of the work was carried out, for its hospitality.
\bigskip

\section{Aronson-B\'enilan and Li-Yau-Hamilton estimates for FDE}

In this section we study the FDE \eqref{eq2} (with $0<p<1$) coupled with the Ricci flow \eqref{rf},  and prove Aronson-B\'enilen and Li-Yau-Hamilton type estimates for its positive solutions.

Suppose $u=u(x,t)$ is a positive solution to the FDE \eqref{eq2}, with $0<p<1$. Notice that the function $v=\frac{p}{p-1}u^{p-1}$ satisfies the equation
\begin{equation}\label{eqv}
\frac{\partial v}{\partial t}=(p-1)v\Delta v + |\nabla v|^2 + (p-1)Rv.
\end{equation}

Let
$$F=\frac{|\nabla v|^2}{v}-b\frac{v_t}{v}+(1-b)\frac{R}{v}.$$
If we denote by $$y=\frac{|\nabla v|^2}{v}+\frac{R}{v}\qquad \textrm{and} \qquad z=\frac{v_t}{v}+\frac{R}{v},$$
then we have
$$F=y-bz.$$

Next we recall the following result from \cite{CaZh2014} (see Proposition 2.2 in \cite{CaZh2014}):
\begin{proposition}\label{evolution F} Suppose $u$ is a smooth positive solution to \eqref{eq2} and $v=\frac{p}{p-1}u^{p-1}$. Let $$\mathcal{L}=\frac{\partial}{\partial t}-(p-1)v\Delta.$$ Then,
\begin{equation}\label{eq F}
\begin{aligned}
\mathcal{L}(F)&=2p\nabla_i F\nabla_iv-[\frac{b-1}{v}+p-1](\frac{\partial R}{\partial t}-2\nabla_i R\nabla_i v+2R_{ij}\nabla_i v \nabla_j v)\\
&\quad\ -2(p-1)|\nabla^2 v+\frac{b}{2}Rc|^2+\frac{(b-2)^2}{2}(p-1)|Rc|^2-\frac{1}{b}F^2\\
&\quad\ -[(p-1)R+\frac{2(b-1)}{b}\frac{R}{v}]F -\frac{b-1}{b}y^2-\frac{(b-1)(b-2)}{b} y \frac{R}{v}.
\end{aligned}
\end{equation}
%where $$\mathcal{L}=\frac{\partial}{\partial t}-(p-1)v\Delta.$$
\end{proposition}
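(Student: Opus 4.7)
The strategy is to compute $\mathcal{L}(F)$ directly by writing $F=f/v$ with $f=|\nabla v|^2-b v_t+(1-b)R$, deriving a quotient rule for $\mathcal{L}$, and then computing $\mathcal{L}$ on each of the three summands of $f$. The key ingredients are equation \eqref{eqv}, which rearranges to $\mathcal{L}(v)=|\nabla v|^2+(p-1)Rv$; the Ricci-flow identities $\partial_t R=\Delta R+2|Rc|^2$ and $\partial_t g^{ij}=2R^{ij}$ (the latter forcing $\partial_t$ to fail to commute with the gradient and Hessian); and the Bochner formula
\[
\Delta|\nabla v|^2=2|\nabla^2 v|^2+2\nabla v\cdot\nabla\Delta v+2R_{ij}\nabla_i v\nabla_j v.
\]
A short direct calculation yields the quotient identity
\[
\mathcal{L}(f/v)=\mathcal{L}(f)/v-f\,\mathcal{L}(v)/v^2+2(p-1)\,\nabla v\cdot\nabla(f/v),
\]
reducing the problem to computing $\mathcal{L}(|\nabla v|^2)$, $\mathcal{L}(v_t)$, and $\mathcal{L}(R)$.

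The individual computations are standard. For $\mathcal{L}(R)$ one simply substitutes $\partial_t R=\Delta R+2|Rc|^2$. For $\mathcal{L}(|\nabla v|^2)$ one combines the Bochner formula with the metric-derivative contribution $2R_{ij}\nabla_i v\nabla_j v$ from $\partial_t g^{ij}$, then eliminates $\nabla v\cdot\nabla\Delta v$ using the gradient of \eqref{eqv}; the leading term is $-2(p-1)v|\nabla^2 v|^2$. For $\mathcal{L}(v_t)$ one differentiates \eqref{eqv} in $t$, invokes $\partial_t(\Delta v)=\Delta v_t+2R_{ij}\nabla_i\nabla_j v$, and repeatedly substitutes \eqref{eqv} to trade $v_t$ back for $(p-1)v\Delta v+|\nabla v|^2+(p-1)Rv$; this is what produces the crucial Hessian--Ricci cross-term $2bR_{ij}\nabla_i\nabla_j v$ inside $-b\,\mathcal{L}(v_t)$ that is needed to complete the square.

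Substituting into the quotient template and collecting is the last, tedious step. The three independent cross-gradient contributions---$2(p-1)\nabla v\cdot\nabla F$ from the template itself, plus the $2\nabla v\cdot\nabla|\nabla v|^2/v$ piece inherited from inside $\mathcal{L}(|\nabla v|^2)$, and analogous pieces from $\mathcal{L}(v_t)$ and $\mathcal{L}(R)$---collapse, after using $F=y-bz$, into the single term $2p\,\nabla_i F\nabla_i v$. The three quadratic pieces in $\nabla^2 v$ and $Rc$ are then completed into $-2(p-1)|\nabla^2 v+\tfrac{b}{2}Rc|^2$, with residual $\tfrac{(b-2)^2}{2}(p-1)|Rc|^2$. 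Finally, a purely algebraic rearrangement using $y=F+bz$ produces the $-F^2/b$, $-(b-1)y^2/b$, the $F$-linear $R/v$ coefficient, and the $(b-1)(b-2)\,yR/(bv)$ cross-term.

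The main obstacle is sign-and-prefactor bookkeeping. In particular, the Hamilton-type combination $\partial_t R-2\nabla R\cdot\nabla v+2R_{ij}\nabla_i v\nabla_j v$ must emerge with the precise coefficient $-[(b-1)/v+(p-1)]$, which requires careful cancellation between the $(b-1)/v$ piece arising from the quotient-rule application to $R/v$ and the $(p-1)$ piece from $\mathcal{L}(v_t)$. Likewise, the completion of the square has to produce coefficient exactly $b/2$ in front of $Rc$, which constrains how the remaining curvature-squared terms are regrouped. Everything is driven by \eqref{eqv} and the identities above, but matching all these numerical coefficients across the roughly twenty intermediate terms is the delicate part.
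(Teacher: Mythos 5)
The paper does not actually prove Proposition~\ref{evolution F}: it is recalled verbatim from Proposition~2.2 of the companion paper \cite{CaZh2014}, so there is no in-paper proof to compare your argument against. What you give is a plausible reconstruction of the computation that must underlie the cited result, and its overall architecture is sound. Your quotient identity
\[
\mathcal{L}(f/v)=\frac{\mathcal{L}(f)}{v}-\frac{f}{v^2}\mathcal{L}(v)+2(p-1)\,\nabla v\cdot\nabla(f/v)
\]
is correct (one checks it by expanding $\partial_t(f/v)$ and $-(p-1)v\,\Delta(f/v)$ with the usual product and quotient rules for the Laplacian), and the three auxiliary identities you invoke---the Bochner formula, $\partial_t R=\Delta R+2|Rc|^2$, and the commutator relation $\partial_t(\Delta v)=\Delta v_t+2R_{ij}\nabla_i\nabla_j v$ (which uses the contracted second Bianchi identity so that the $\partial_t\Gamma$ contribution vanishes after tracing)---are all correct and are exactly what one needs. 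The passage from $2(p-1)\nabla F\cdot\nabla v$ to $2p\,\nabla F\cdot\nabla v$ does indeed come from the $2\nabla v\cdot\nabla|\nabla v|^2$ and $2\nabla v\cdot\nabla v_t$ pieces inside $\mathcal{L}(f)$, combined with the $-\tfrac{f}{v^2}\mathcal{L}(v)=-\tfrac{f|\nabla v|^2}{v^2}-(p-1)\tfrac{fR}{v}$ contribution; one small imprecision is that $\mathcal{L}(R)$ itself produces no $\nabla R\cdot\nabla v$ cross-term, so the $(1-b)\nabla R\cdot\nabla v$ piece needed to write the drift as $2p\,\nabla f\cdot\nabla v/v$ has to be introduced and compensated inside the Hamilton bracket $\partial_t R-2\nabla R\cdot\nabla v+2R_{ij}\nabla_i v\nabla_j v$, not extracted from $\mathcal{L}(R)$ as your phrasing suggests. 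Modulo that bookkeeping remark and the fact that you stop short of carrying out the final collection of the roughly twenty intermediate terms, the route you describe is the natural and correct one and is, to the best that can be judged without access to \cite{CaZh2014}, the same direct computation that the cited proposition rests on.
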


We also need the following trace version of Hamilton's matrix Li-Yau estimate for the Ricci flow.
\begin{theorem}[\textbf{Hamilton \cite{Ham1993jdg}}]\label{matrix harnack}
Let $(M^n, g_{ij}(t))$, $t\in[0, T)$, be a complete solution to the Ricci flow with bounded curvature and nonnegative curvature operator, then for any 1-form $V_i$ on $M^n$, we have
$$\frac{\partial R}{\partial t} +2\nabla_i R V_i + 2R_{ij}V_iV_j + \frac{R}{t}\geq0$$
for $t\in(0,T)$.
\end{theorem}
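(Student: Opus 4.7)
The plan is to deduce this trace Harnack from Hamilton's full matrix Li-Yau-Hamilton estimate for the Ricci flow, proved in \cite{Ham1993jdg}.

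\textbf{Step 1 (Matrix Harnack).} Introduce the Hamilton tensors
$$P_{abc} = \nabla_a R_{bc} - \nabla_b R_{ac},$$
$$M_{ab} = \Delta R_{ab} - \tfrac{1}{2}\nabla_a \nabla_b R + 2 R_{kabl} R_{kl} - R_{ak} R_{bk} + \tfrac{R_{ab}}{2t},$$
and the Harnack quadratic
$$Z(U, W) := M_{ab} W_a W_b + 2\, P_{abc} U_{ab} W_c + R_{abcd} U_{ab} U_{cd},$$
defined for a $1$-form $W$ and a $2$-form $U$. On a complete solution of the Ricci flow with bounded curvature and nonnegative curvature operator, the inequality $Z(U, W) \ge 0$ for all $U, W$ is the main theorem of \cite{Ham1993jdg}. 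I would prove it via Hamilton's tensor maximum principle: compute $(\partial_t - \Delta) Z$ along the flow and show that the reaction terms assemble into a sum of squares plus expressions whose nonnegativity is forced by the nonnegative-curvature-operator cone (via the Uhlenbeck-frame bundle and a Lie-algebraic regrouping of curvature-commutator terms). Noncompactness is handled by a Shi-type cutoff, and nonnegativity of the curvature operator is preserved along the complete flow by Shi's theorem, both relying on the bounded-curvature hypothesis.

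\textbf{Step 2 (Trace).} Given a $1$-form $V$, for each orthonormal frame vector $e_i$ apply the matrix estimate with $W = e_i$ and the $2$-form $U_{ab} = \tfrac{1}{2}(V_a (e_i)_b - V_b (e_i)_a)$. Using the antisymmetry $P_{abc} = -P_{bac}$, the cross term simplifies to $2 P_{abc} V_a (e_i)_b (e_i)_c$, and the Riemann term collapses to $R_{abcd} V_a (e_i)_b V_c (e_i)_d$. Summing over $i$ and invoking the completeness relation $\sum_i (e_i)_a (e_i)_b = g_{ab}$ together with the identities
$$g^{ab} M_{ab} = \tfrac{1}{2}\!\left(\tfrac{\partial R}{\partial t} + \tfrac{R}{t}\right), \quad g^{bc} P_{abc} = \tfrac{1}{2} \nabla_a R, \quad g^{bd} R_{abcd} = R_{ac}$$
(the first from the scalar evolution $\partial_t R = \Delta R + 2|\mrm{Rc}|^2$, the second from the contracted second Bianchi identity $\nabla^b R_{ab} = \tfrac{1}{2}\nabla_a R$), the summed matrix inequality reduces to
$$\tfrac{1}{2}\!\left(\tfrac{\partial R}{\partial t} + \tfrac{R}{t}\right) + V_a \nabla_a R + R_{ac} V_a V_c \;\ge\; 0,$$
and multiplying by $2$ gives the stated Harnack.

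\textbf{Main obstacle.} All the technical difficulty sits in Step 1. The algebraic identity certifying that the reaction part of $(\partial_t - \Delta) Z$ is nonnegative on the nonnegative-curvature-operator cone is the central computation of \cite{Ham1993jdg}; it requires the Uhlenbeck trick to convert the nonlinear PDE into a system on an orthonormal frame bundle, followed by a careful Lie-algebraic rearrangement of curvature-commutator terms into manifestly nonnegative pieces. Extending the maximum principle to the complete noncompact setting requires Shi-style cutoffs justified by the bounded-curvature hypothesis. By contrast, Step 2 is a short purely algebraic trace computation using Bianchi.
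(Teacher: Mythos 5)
The paper gives no proof of this statement—it is quoted verbatim as Hamilton's theorem from \cite{Ham1993jdg}—and your proposal reproduces exactly the route taken there: the full matrix Li-Yau-Hamilton estimate $Z(U,W)\geq 0$ followed by the trace with $U_{ab}=\tfrac12(V_aW_b-V_bW_a)$ summed over an orthonormal frame. Your Step 2 contractions ($g^{ab}M_{ab}=\tfrac12(\partial_t R+R/t)$ via $\partial_t R=\Delta R+2|\mathrm{Rc}|^2$, $g^{bc}P_{abc}=\tfrac12\nabla_a R$ via contracted Bianchi) are correct, and Step 1 is an accurate, if schematic, account of Hamilton's argument, so the proposal is correct and essentially identical to the cited source.
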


We remark that by the work of S. Brendle in \cite{Bre}, the above result remains valid if we replace the assumption of nonnegative curvature operator by the weaker condition that $(M^n, g(t)) \times {\mathbb R}^2$ has nonnegative isotropic curvature.

Similar to the arguments in \cite{CaZh2014}, we first establish a local differential Harnack estimate which will lead to a global upper bound for our Harnack quantity. Then, refining the existing Harnack inequality with Hamilton's distance like function gives us the final result.

Since the last term in \eqref{eq F} has different signs for $b\geq 2$ and $1<b<2$, we treat these two cases separately.

\subsection{The case of $1<b<2$}
The local differential Harnack inequality in this case is as follows:
\begin{proposition}\label{local harnack p<1 1<b<2}
Let $(M^n, g_{ij}(t))$, $t\in[0,T]$, be a complete solution to the Ricci flow \eqref{rf} with bounded curvature and nonnegative curvature operator. If $u$ is a positive solution to \eqref{eq2} with $p\in(1-\frac{2(b-1)}{bn}, 1)$, then for $v=\frac{p}{p-1}u^{p-1}$, any point $O\in M$, any constants $R_0>0$, $\delta>0$, and $b\in(1,2)$, we have
\begin{align*}
\frac{|\nabla v|^2}{v}-b\frac{v_t}{v}-(b-1)\frac{R}{v}&\geq -\frac{b\alpha}{t}\left[1+\frac{1}{2}\sqrt{\frac{1+\delta}{\delta}}+C_1\frac{t\bar{v}_{max}}{R_0^2}+C_2(\delta)tR_{max}\right]
\end{align*}
on $\coprod_{t\in(0, T]}B_t(O, R_0)\times\{t\}$, where $C_1=(8n+128)(1-p)+\frac{32b\alpha p^2}{b-1}$, $C_2(\delta)=17-p+\sqrt{\frac{b\alpha(2-b)^2(1-p)}{2}+\delta(1-p)^2}$, $\alpha=\frac{bn(1-p)}{2-bn(1-p)}$, $\bar{v}_{max}=\sup_{\coprod_{t\in[0, T]}B_t(O, 2R_0)\times\{t\}}(-v)$, and $R_{max}=\sup_{M\times[0,T]}R(x,t)$.
\end{proposition}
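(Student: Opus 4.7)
The plan is to obtain the estimate by applying the minimum principle to $H:=t\phi F$, where $\phi$ is a Hamilton-type spatial cutoff supported in $\coprod_{t\in(0,T]}B_t(O,2R_0)$.  The starting point is the evolution identity \eqref{eq F}, and three sign observations specific to the FDE regime (i.e.\ $v<0$, $p-1<0$, $b-1>0$) drive the whole argument: (i) $\mathcal L=\partial_t-(p-1)v\Delta$ is genuinely parabolic because $(p-1)v>0$; (ii) the coefficient $-\left[\tfrac{b-1}{v}+(p-1)\right]$ of the Hamilton--Harnack term in \eqref{eq F} is strictly positive, so Theorem \ref{matrix harnack} with $V_i=-\nabla_i v$ immediately produces a lower bound controlled by $R_{max}/t$ and $\bar v_{max}$; (iii) $-2(p-1)|\nabla^2 v+\tfrac{b}{2}Rc|^2\ge 0$, so the Bochner term, which was the good term in the PME case of \cite{CaZh2014}, is still usable but in the opposite direction.

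To extract a positive $F^{2}$ contribution from (iii), I would use the trace inequality $|\nabla^2 v+\tfrac{b}{2}Rc|^2\ge\tfrac{1}{n}(\Delta v+\tfrac{b}{2}R)^2$ together with the identity $\Delta v=\tfrac{z-y}{p-1}-R$ read off from \eqref{eqv}.  Since $F=y-bz$, this converts $\Delta v+\tfrac{b}{2}R$ into an affine function of $F$, $y$, $R$ whose linear-in-$F$ coefficient is $\tfrac{1}{b(1-p)}$.  Expanding the square and combining with the existing $-\tfrac{1}{b}F^{2}$, $-\tfrac{b-1}{b}y^{2}$ and $-\tfrac{(b-1)(b-2)}{b}yR/v$ terms from \eqref{eq F}, the hypothesis $p>1-\tfrac{2(b-1)}{bn}$ is precisely what makes the net coefficient of $F^{2}$ collapse to $\tfrac{1}{b\alpha}$ and keeps the coefficient of $y^{2}$ non-negative.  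The surviving $yF$ cross term is absorbed by Young's inequality calibrated by the parameter $\delta$ (origin of the $\tfrac12\sqrt{(1+\delta)/\delta}$ factor and of the square root inside $C_2(\delta)$), while the $RF$, $yR$ and $R^{2}$ cross terms are handled similarly and produce the $R_{max}$ contributions in $C_2(\delta)$.  The upshot is a differential inequality of the schematic form
$$
\mathcal L(F)\ \ge\ \frac{1}{b\alpha}F^{2}+2p\,\nabla F\cdot\nabla v\,-\,C(b,p,\delta)\frac{R}{t\,v}\,-\,C'(b,p,\delta)R_{max}|F|\,-\,C''(b,p,\delta)R_{max}^{2}.
$$

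For the localization I would take $\phi=\eta(r(x,t)/R_0)$, where $r(x,t)$ is Hamilton's distance-like function on $B_t(O,2R_0)$ and $\eta\in C^{2}([0,\infty))$ is a standard cutoff equal to $1$ on $[0,1]$ and supported in $[0,2]$, with $|\eta'|^{2}/\eta$ and $|\eta''|$ bounded.  Under bounded curvature with nonnegative curvature operator, Hamilton's construction yields $|\nabla\phi|^{2}/\phi\lesssim R_0^{-2}$ and $\mathcal L(\phi)\ge -CR_0^{-2}\bigl(1+|(p-1)v|\bigr)$, so on the support of $\phi$ every cutoff error is dominated by $R_0^{-2}\bar v_{max}$; this is the origin of $C_1$ in the statement.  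Arguing by contradiction, if the conclusion fails then $H=t\phi F$ attains a negative interior minimum at some $(x_0,t_0)$ with $t_0>0$, where $\mathcal L(H)\le 0$ and $\phi\nabla F=-F\nabla\phi$.  Using the latter identity to rewrite $2p\nabla F\cdot\nabla v$ as $-2pF\,\nabla\phi\cdot\nabla v/\phi$ and bounding it by Cauchy--Schwarz (noting $|\nabla v|^{2}=vy-R$, so on the cutoff set it is controlled by $\bar v_{max}\,y$), then substituting the lower bound above into $\mathcal L(H)\le 0$, produces a quadratic inequality in $(-F)$ with leading coefficient $\tfrac{\phi t_0}{b\alpha}$; solving that quadratic reproduces the stated bound at $(x_0,t_0)$, and hence on all of $B_t(O,R_0)\times\{t\}$.

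The main obstacle is the book-keeping of the quadratic form in $(F,y)$ in the second paragraph: one checks that $(y^{2}\text{-coef})\cdot(F^{2}\text{-coef})<\tfrac14(\text{cross-coef})^{2}$ in general, so the form is not directly positive semidefinite and the $yF$ cross term cannot simply be dropped.  Each Young-inequality absorption eats into the $\tfrac{1}{b\alpha}F^{2}$ coefficient, and the parameter $\delta$ is exactly what records the available leeway; keeping enough of $F^{2}$ to dominate both the cutoff and the $R_{max}$ errors is also what forces the sharper hypothesis $p>1-\tfrac{2(b-1)}{bn}$ rather than the classical $p>1-\tfrac{2}{n}$.
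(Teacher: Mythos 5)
Your proposal follows the paper's route exactly: localize with a Hamilton cutoff, run the minimum principle on $t\phi F$, invoke Hamilton's trace Harnack for the $\partial_t R$ term, use the trace inequality on $|\nabla^2 v+\tfrac{b}{2}Rc|^2$ together with $R+\Delta v=y-z$, and solve the resulting quadratic in $\tilde y-b\tilde z$. A small bookkeeping remark: in the paper the $yF$ cross term is cancelled exactly by choosing $\beta=\tfrac{2(b-1)}{b\alpha}$ in the Cauchy--Schwarz step applied to $\nabla F\cdot\nabla v$, and $\delta$ enters only at the very end as a Young-inequality parameter splitting the discriminant (which is the origin of $\tfrac{1}{2}\sqrt{(1+\delta)/\delta}$ and of the radical in $C_2(\delta)$), rather than being spent absorbing that cross term as your sketch suggests.
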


\begin{proof}
From \eqref{eq F}, we have
\begin{align*}
\mathcal{L}(F)&=2p\nabla_i F\nabla_iv+[\frac{b-1}{-v}+1-p](\frac{\partial R}{\partial t}-2\nabla_i R\nabla_i v+2R_{ij}\nabla_i v \nabla_j v)\\
&\quad\ +2(1-p)|\nabla^2 v+\frac{b}{2}Rc|^2-\frac{(b-2)^2}{2}(1-p)|Rc|^2-\frac{1}{b}(y-bz)^2\\
&\quad\ +[(1-p)R+\frac{2(b-1)}{b}\frac{R}{-v}]F -\frac{b-1}{b}y^2+\frac{(b-1)(2-b)}{b}\cdot \frac{R}{v}y.
\end{align*}

Let $\eta(s)$ be a smooth function defined for $s\geq0$ such that
$\eta(s)=1$ for $0\leq s \leq \frac{1}{2}$, $\eta(s)=0$ for $s\geq 1$, $\eta(s)>0$ for $\frac{1}{2}<s<1$, $|\eta^{\prime}|\leq 16\eta^{\frac{1}{2}}$ and $\eta^{\prime\prime}\geq -16\eta\geq -16$. Define
\begin{equation}\label{eq phi}
\phi(x,t)=\eta\left(\frac{r(x,t)}{2R_0}\right)
\end{equation}
on $\coprod_{t\in[0, T]} B_{t}(O, 2R_0)\times\{t\}$, where $r(x,t)$ is the distance function from $O$ at time $t$.
It follows that
\begin{align*}
t\phi\mathcal{L}(t\phi F)&=t\phi^2F+t^2\phi F\phi_t-(p-1)t^2v\phi F\Delta\phi - 2(p-1)t^2v\phi\nabla_i\phi\nabla_i F + t^2\phi^2\mathcal{L}(F)\\
%&= t\phi^2F+t^2\phi F\phi_t-(p-1)t^2v\phi F\Delta\phi - 2(p-1)t^2v\phi\nabla_i\phi\nabla_i F +
%2p t^2\phi^2\nabla_i F\nabla_iv\\
%&\quad\ +t^2\phi^2[\frac{b-1}{-v}+1-p](\frac{\partial R}{\partial t}-2\nabla_i R\nabla_i v+2R_{ij}\nabla_i v \nabla_j v)\\
%&\quad\ +2(1-p)t^2\phi^2|\nabla^2 v+\frac{b}{2}Rc|^2-\frac{(b-2)^2}{2}(1-p)t^2\phi^2|Rc|^2-\frac{1}{b}t^2\phi^2(y-bz)^2\\
%&\quad\ +[(1-p)R+\frac{2(b-1)}{b}\frac{R}{-v}]t^2\phi^2F -\frac{b-1}{b}t^2\phi^2y^2+\frac{(b-1)(2-b)}{b}\cdot \frac{R}{v}t^2\phi^2y\\
&= \phi(\tilde{y}-b\tilde{z})+t\phi_t(\tilde{y}-b\tilde{z})-(p-1)tv\Delta\phi(\tilde{y}-b\tilde{z}) - 2(p-1)t^2v\phi\nabla_i\phi\nabla_i F\\
&\quad\ +2p t^2\phi^2\nabla_i F\nabla_iv+t\phi^2[\frac{b-1}{-v}+1-p]Q - t\phi^2R[\frac{b-1}{-v}+1-p]\\
&\quad\ +2(1-p)t^2\phi^2|\nabla^2 v+Rc-\frac{2-b}{2}Rc|^2-\frac{(b-2)^2}{2}(1-p)t^2\phi^2|Rc|^2-\frac{1}{b}(\tilde{y}-b\tilde{z})^2\\
&\quad\ +[(1-p)R+\frac{2(b-1)}{b}\frac{R}{-v}]t\phi(\tilde{y}-b\tilde{z}) -\frac{b-1}{b}\tilde{y}^2+\frac{(b-1)(2-b)}{b}\cdot \frac{R}{v}t\phi\tilde{y},
\end{align*}
where $\tilde{y}=t\phi y$, $\tilde{z}=t\phi z$, and
\begin{equation} \label{eq Q}
Q=t\frac{\partial R}{\partial t}+R-2t\nabla_i R\nabla_i v+2tR_{ij}\nabla_i v \nabla_j v\geq0.
\end{equation}
Note that inequality (2.4) is a consequence of Theorem \ref{matrix harnack}.

It was shown in \cite{CaZh2014} that
$$|\frac{\partial \phi}{\partial t}|\leq 16R_{max},\qquad \Delta\phi\geq-\frac{8n}{R^2_0}, \qquad |\nabla \phi|\leq \frac{8}{R_0}\phi^{\frac{1}{2}}.$$
If $t\phi F\geq 0$ in $\coprod_{t\in[0, T]}B_t(O, 2R_0)\times\{t\}$, then the theorem is automatically true. Otherwise, since $t\phi F=0$ on the parabolic boundary of $\coprod_{t\in[0, T]}B_t(O, 2R_0)\times\{t\}$, we may assume that $t\phi F$ achieves a negative minimum at time $t_0>0$ and some interior point $x_0$. Thus, at $(x_0, t_0)$, we have
$$\tilde{y}-b\tilde{z}=t_0\phi F<0, \qquad F\nabla\phi=-\phi\nabla F,\qquad \textrm{and}\qquad \mathcal{L}(t\phi F)(x_0,t_0)\leq0.$$
Moreover, since
$$- 2p t_0^2\phi F \nabla_i\phi\nabla_i v\geq 2p t_0^2\phi F|\nabla\phi||\nabla v|\geq t_0\phi F\frac{16p}{R_0}(-\tilde{y})^{\frac{1}{2}}(-t_0v)^{\frac{1}{2}},$$
$$R+\Delta v=y-z,\qquad\ \textrm{and}\ \qquad -\tilde{y}\geq t\phi\frac{R}{-v},$$
%and
%$$-\frac{b-1}{b}\tilde{y}^2+\frac{(b-1)(2-b)}{b}\cdot t\phi\frac{R}{v}\tilde{y}\leq -\frac{(b-1)^2}{b}\tilde{y}^2\leq0,$$
by the Cauchy-Schwarz inequality, we have
\begin{align*}
0\geq &t_0\phi\mathcal{L}(t\phi F)\\
%\geq &(\tilde{y}-b\tilde{z})+ 16t_0R_{max}(\tilde{y}-b\tilde{z}) + (8n+128)\frac{(p-1)t_0v}{R_0^2}(\tilde{y}-b\tilde{z}) +\frac{16p}{R_0}(-\tilde{y})^{\frac{1}{2}}(-t_0v)^{\frac{1}{2}}(\tilde{y}-b\tilde{z})\\
%\quad\ &-(1-p)t_0\phi^2R-(b-1)t_0\phi^2(\frac{R}{-v})+\frac{2}{n(1-p)}[\tilde{y}-\tilde{z}-\frac{2-b}{2}(1-p)t_0\phi R]^2\\
%\quad\ &-\frac{(b-2)^2}{2}(1-p)t_0^2\phi^2R^2-\frac{1}{b}(\tilde{y}-b\tilde{z})^2+[(1-p)R+\frac{2(b-1)}{b}\frac{R}{-v}]t_0\phi(\tilde{y}-b\tilde{z})\\
%\quad\ & -\frac{b-1}{b}\tilde{y}^2+\frac{(b-1)(2-b)}{b}\cdot \frac{R}{v}t_0\phi\tilde{y}\\
\geq & (\tilde{y}-b\tilde{z})+ 16t_0R_{max}(\tilde{y}-b\tilde{z}) + (8n+128)\frac{(p-1)t_0v}{R_0^2}(\tilde{y}-b\tilde{z}) +
\frac{16p}{R_0}(-\tilde{y})^{\frac{1}{2}}(-t_0v)^{\frac{1}{2}}(\tilde{y}-b\tilde{z})\\
\quad\ &-(1-p)t_0\phi R-(b-1)t_0\phi(\frac{R}{-v})+\frac{2}{b^2n(1-p)}[\tilde{y}-b\tilde{z}-(b-1)(-\tilde{y})-\frac{b(2-b)}{2}(1-p)t_0\phi R]^2\\
\quad\ &-\frac{(b-2)^2}{2}(1-p)t_0^2\phi^2R^2-\frac{1}{b}(\tilde{y}-b\tilde{z})^2+[(1-p)R+\frac{2(b-1)}{b}\frac{R}{-v}]t\phi(\tilde{y}-b\tilde{z})\\
\quad\ & -\frac{b-1}{b}\tilde{y}^2+\frac{(b-1)(2-b)}{b}\cdot t_0^2\phi^2(\frac{R}{-v})^2\\
\geq & (\tilde{y}-b\tilde{z})+ 16t_0R_{max}(\tilde{y}-b\tilde{z}) + (8n+128)\frac{(p-1)t_0v}{R_0^2}(\tilde{y}-b\tilde{z}) +
\frac{16p}{R_0}(-\tilde{y})^{\frac{1}{2}}(-t_0v)^{\frac{1}{2}}(\tilde{y}-b\tilde{z})\\
\quad\ &-(1-p)t_0\phi R +\frac{1}{b\alpha}(\tilde{y}-b\tilde{z})^2+ \frac{b-1}{b}[\frac{2(b-1)}{bn(1-p)}-1](-\tilde{y})^2-\frac{4(b-1)}{b^2n(1-p)}(-\tilde{y})(\tilde{y}-b\tilde{z})\\
\quad\ &-\frac{(b-2)^2}{2}(1-p)t_0^2\phi^2R^2+[(1-p)R+\frac{2(b-1)}{b}\frac{R}{-v}]t_0\phi(\tilde{y}-b\tilde{z})\\
\quad\ & +\frac{(b-1)(2-b)}{b}\cdot t^2\phi^2(\frac{R}{-v})^2-(b-1)t\phi(\frac{R}{-v}).
%\geq & (\tilde{y}-b\tilde{z})+ 16t_0R_{max}(\tilde{y}-b\tilde{z}) + (8n+128)\frac{(p-1)t_0v}{R_0^2}(\tilde{y}-b\tilde{z}) +
%\delta(-\tilde{y})(\tilde{y}-b\tilde{z})\\
%\quad\ &+\frac{64p^2}{\delta R^2_0}(-t_0v)(\tilde{y}-b\tilde{z})-(1-p)t_0\phi R +\frac{1}{b\alpha}(\tilde{y}-b\tilde{z})^2+ \frac{b-1}{b}[\frac{2(b-1)}{bn(1-p)}-1](-\tilde{y})^2\\
%\quad\ &-\frac{4(b-1)}{b^2n(1-p)}(-\tilde{y})(\tilde{y}-b\tilde{z})-\frac{2(2-b)}{bn(1-p)}(1-p)t_0\phi R(\tilde{y}-b\tilde{z})-\frac{(b-2)^2}{2}(1-p)t_0^2\phi^2R^2\\
%\quad\ &+[(1-p)R+\frac{2(b-1)}{b}\frac{R}{-v}]t_0\phi(\tilde{y}-b\tilde{z})+ \frac{(b-1)(2-b)}{b}\cdot t^2\phi^2(\frac{R}{-v})^2-(b-1)t\phi(\frac{R}{-v}).
\end{align*}
According to the assumption on $p$, we have $\frac{2(b-1)}{bn(1-p)}-1\geq0$. Moreover, if we choose $\beta=\frac{2(b-1)}{b\alpha}$, then we have
\begin{align*}
0\geq & (\tilde{y}-b\tilde{z})+ (16+(1-p))t_0R_{max}(\tilde{y}-b\tilde{z}) + [(8n+128)(1-p)+\frac{64p^2}{\beta}]\frac{t_0\bar{v}_{max}}{R_0^2}(\tilde{y}-b\tilde{z})
\\
\quad\ &-(1-p)t_0\phi R +\frac{1}{b\alpha}(\tilde{y}-b\tilde{z})^2-(\frac{2(b-1)}{b\alpha}-\beta)(-\tilde{y})(\tilde{y}-b\tilde{z})\\
\quad\ &-\frac{(b-2)^2}{2}(1-p)t_0^2\phi^2R^2+\frac{(b-1)^2}{b\alpha}\cdot t_0^2\phi^2(\frac{R}{-v})^2-(b-1)t_0\phi(\frac{R}{-v})\\
\geq & \frac{1}{b\alpha}(\tilde{y}-b\tilde{z})^2+\left[1+(17-p)t_0R_{max}+[(8n+128)(1-p)+\frac{64p^2}{\beta}]\frac{t_0\bar{v}_{max}}{R_0^2}\right](\tilde{y}-b\tilde{z})\\
\quad\ & -\frac{(b-2)^2}{2}(1-p)t_0^2R_{max}^2-(1-p)t_0R_{max}-\frac{b\alpha}{4}.
\end{align*}
This implies that
\begin{align*}
\tilde{y}-b\tilde{z}&\geq -b\alpha\left[1+(17-p)t_0R_{max}+[(8n+128)(1-p)+\frac{64p^2}{\beta}]\frac{t_0\bar{v}_{max}}{R_0^2}\right]\\
&\quad\ -\sqrt{b\alpha\left[\frac{(b-2)^2}{2}(1-p)t_0^2R_{max}^2+(1-p)t_0R_{max}+\frac{b\alpha}{4}\right]}.
\end{align*}
Now Proposition \ref{local harnack p<1 1<b<2} follows easily.
\end{proof}

If $u$ is bounded, then letting $R_0\rightarrow \infty$, we get a global differential Harnack inequality.
\begin{corollary}\label{noncompact harnack p<1 b<2}
Let $(M^n, g_{ij}(t))$, $t\in[0, T]$, be a complete solution to the Ricci flow \eqref{rf} with bounded curvature and nonnegative curvature operator. If $u$ is a smooth positive bounded solution to \eqref{eq FDE} with $1-\frac{2(b-1)}{bn}<p<1$, then for $v=\frac{p}{p-1}u^{p-1}$, any constants $\delta>0$, and $b\in(1,2)$, we have
\begin{align*}
\frac{|\nabla v|^2}{v}-b\frac{v_t}{v}-(b-1)\frac{R}{v}&\geq -\frac{b\alpha}{t}\left[1+\frac{1}{2}\sqrt{\frac{1+\delta}{\delta}}+C_2(\delta)tR_{max}\right]
\end{align*}
on $M\times(0,T]$, where $C_2(\delta)$, $\alpha$, and $R_{max}$ are the same constants as in Proposition \ref{local harnack p<1 1<b<2}.
\end{corollary}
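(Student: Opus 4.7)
The plan is to derive Corollary \ref{noncompact harnack p<1 b<2} directly from the local differential Harnack estimate of Proposition \ref{local harnack p<1 1<b<2} by sending the radius $R_0\to\infty$, exploiting the boundedness hypothesis on $u$ to obtain a uniform bound on $\bar{v}_{max}$ that is independent of the localizing ball.

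First I would observe that with $0<p<1$ and $v=\frac{p}{p-1}u^{p-1}$ we have $-v=\frac{p}{1-p}u^{p-1}>0$, so the hypothesis that $u$ is a bounded positive solution (interpreted in the standard sense that $u$ is in $L^\infty$ and bounded away from zero, equivalently $-v\in L^\infty(M\times[0,T])$) furnishes a finite constant $V_0:=\sup_{M\times[0,T]}(-v)<\infty$. The crucial point is that this bound is global: for any base point $O\in M$ and any $R_0>0$, the local quantity $\bar{v}_{max}=\sup_{\coprod_{t\in[0,T]}B_t(O,2R_0)\times\{t\}}(-v)$ that appears in Proposition \ref{local harnack p<1 1<b<2} satisfies $\bar{v}_{max}\leq V_0$ uniformly in $R_0$.

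Next I would fix an arbitrary point $(x_0,t_0)\in M\times(0,T]$ and apply Proposition \ref{local harnack p<1 1<b<2} with base point $O=x_0$ for a sequence of radii $R_0\to\infty$. Since $(x_0,t_0)\in\coprod_{t\in(0,T]}B_t(x_0,R_0)\times\{t\}$ for every $R_0>0$, the local estimate yields at $(x_0,t_0)$
$$\frac{|\nabla v|^2}{v}-b\frac{v_t}{v}-(b-1)\frac{R}{v}\geq -\frac{b\alpha}{t_0}\left[1+\frac{1}{2}\sqrt{\frac{1+\delta}{\delta}}+\frac{C_1 t_0 V_0}{R_0^2}+C_2(\delta)t_0 R_{max}\right].$$
Passing to the limit $R_0\to\infty$ eliminates the $C_1 t_0 V_0/R_0^2$ term on the right and produces the claimed global inequality at $(x_0,t_0)$; since $(x_0,t_0)$ was arbitrary, the corollary follows.

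There is essentially no genuine obstacle in this deduction, as all of the delicate maximum-principle analysis, the choice of the localizing cutoff $\phi$, and the balancing of cross terms via Cauchy--Schwarz have already been carried out in the proof of Proposition \ref{local harnack p<1 1<b<2}. The only feature worth highlighting is the role played by the global $L^\infty$ bound on $-v$ supplied by the boundedness of $u$: it is precisely this bound that lets the spatial-cutoff error term be dominated by $C_1 t\bar{v}_{max}/R_0^2$ uniformly in $R_0$ and hence vanish in the large-radius limit.
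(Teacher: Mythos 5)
Your proof is correct and follows exactly the route the paper takes, namely applying Proposition \ref{local harnack p<1 1<b<2} at an arbitrary center and letting $R_0\to\infty$, using the global bound on $-v$ furnished by the boundedness hypothesis to kill the $C_1 t\bar{v}_{max}/R_0^2$ term. Your explicit remark that for $0<p<1$ the finiteness of $\bar{v}_{max}=\sup(-v)$ requires $u$ to be bounded \emph{away from zero} is a helpful clarification of the paper's terse hypothesis, but it is the intended reading and not a gap.
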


We shall need the following lemma about some distance-like function constructed by Hamilton (see e.g. \cite{Ham1993jdg} and \cite{CaZh2006}),
\begin{lemma}\label{distance function}
Let $g_{ij}(t)$, $t\in[0, T]$, be a complete solution to the Ricci flow on $M^n$ with bounded curvature tensor. Then, there exists a smooth function $f(x)$ on $M$  and a positive constant $C>0$ such that $f\geq1$,
$f(x)\rightarrow\infty$ as $d_0(x,O)\rightarrow\infty$ (for any fixed point $O\in M$),
$$|\nabla f|_{g(t)}\leq C,\qquad and \qquad |\nabla\nabla f|_{g(t)}\leq C$$
on $M\times[0,T]$.
\end{lemma}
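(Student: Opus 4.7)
The plan is to construct $f$ as a smoothed version of the initial-time distance function $d_{g(0)}(\cdot, O)$, and then propagate its derivative bounds to all $g(t)$ using the uniform equivalence of metrics forced by bounded curvature.

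First, I would establish that all metrics $g(t)$ for $t\in[0,T]$ are uniformly equivalent to $g(0)$. Since $|\mathrm{Rc}|_{g(t)}\leq (n-1)K$ and $\partial_t g = -2\mathrm{Rc}$, integration gives
$$e^{-2(n-1)KT}\, g(0) \;\leq\; g(t) \;\leq\; e^{2(n-1)KT}\, g(0),$$
so tensor norms with respect to $g(t)$ and $g(0)$ are comparable up to a constant depending only on $n$, $K$, $T$. In particular, a bound $|\nabla f|_{g(0)}\leq C_0$ immediately yields $|\nabla f|_{g(t)}\leq C$ for a smooth $f$ on $M$. For the Hessian, I would write
$$(\nabla^2_{g(t)} f)_{ij} - (\nabla^2_{g(0)} f)_{ij} \;=\; -\bigl[\Gamma(t)-\Gamma(0)\bigr]^k_{ij}\,\partial_k f,$$
and use $\partial_s \Gamma^k_{ij} = -g^{kl}(\nabla_i R_{jl}+\nabla_j R_{il}-\nabla_l R_{ij})$ together with Shi's derivative estimates, which under bounded curvature give $|\nabla\mathrm{Rm}|_{g(s)}\leq C(n,K,T)$ on $[0,T]$ (possibly after shifting the construction to start at $g(t_0)$ for a small $t_0>0$ and then handling $[0,t_0]$ separately), to bound the Christoffel difference uniformly. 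Combined with the gradient bound, this gives $|\nabla^2 f|_{g(t)}\leq C$.

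Next, I would construct $f$ on the fixed Riemannian manifold $(M, g(0))$. Set $\rho(x)=d_{g(0)}(x,O)+2$, which is proper, satisfies $\rho\geq 1$, and is $1$-Lipschitz. A standard mollification, for instance a Greene--Wu type smoothing or a convolution of $\rho$ against a fixed smooth bump supported in small geodesic balls, produces a smooth $f$ with $|f-\rho|\leq 1$, so $f\geq 1$ and $f(x)\to\infty$ as $d_0(x,O)\to\infty$, and with $|\nabla f|_{g(0)}\leq C_0$. To upgrade to a uniform Hessian bound I would exploit the bounded curvature of $g(0)$ together with Shi's higher-derivative estimates to obtain locally bounded geometry at a fixed scale --- in particular uniform harmonic-coordinate charts --- so that the mollification at that fixed scale yields $|\nabla^2 f|_{g(0)}\leq C_0$.

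The main obstacle is the Hessian bound for the smoothed distance function: bounded curvature by itself need not give a positive lower bound on the injectivity radius, so a naive convolution in a single exponential chart may fail to produce a Hessian controlled only by the curvature bound. Circumventing this via Shi's estimates and a patching argument using harmonic-coordinate charts of uniform size is the heart of the proof, and the construction is essentially due to Hamilton in \cite{Ham1993jdg}.
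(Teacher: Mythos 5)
The paper does not actually prove this lemma; it is quoted as a known construction of Hamilton, with references to \cite{Ham1993jdg} and \cite{CaZh2006}, so there is no in-paper argument to compare against. Your outline is the standard reconstruction, and its two reductions are sound: uniform equivalence of the metrics $g(t)$ on $[0,T]$ follows from $|\mathrm{Rc}|\leq(n-1)K$ by integrating $\partial_t g=-2\mathrm{Rc}$, and the time-independence of the first and second covariant derivative bounds follows from the Christoffel difference formula together with Shi's estimate $|\nabla\mathrm{Rm}|_{g(s)}\leq C/\sqrt{s}$, which is integrable on $[0,T]$, so no time-shift is even needed. The reduction to constructing a single proper function on $(M,g(0))$ with $f\geq 1$, $|\nabla f|_{g(0)}+|\nabla^2 f|_{g(0)}\leq C_0$ is exactly where the content lies, and you correctly identify it.

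The one step I would push back on is your proposed mechanism for the Hessian bound at time $0$: ``uniform harmonic-coordinate charts'' at a fixed scale are \emph{not} a consequence of a curvature bound alone (even with all of Shi's higher-derivative estimates); a lower bound on the harmonic radius requires in addition a lower bound on the injectivity radius or on volumes of small balls, which you have already observed is unavailable here. So as written, the heart of your argument rests on a tool you cannot invoke. The standard repair is to avoid charts on $M$ altogether: for each $q\in M$ pull the metric back by $\exp_q$ to the ball of radius $1$ in $T_qM$, where Rauch/Jacobi-field comparison under $|\mathrm{Rm}|\leq K$ gives uniform two-sided bounds on the pulled-back metric and its Christoffel symbols \emph{independently of the injectivity radius}, mollify the lifted Lipschitz function $\rho\circ\exp_q$ there, and evaluate at the origin; the resulting function of $q$ is smooth with gradient and Hessian bounded in terms of $n$ and $K$ only (this is the Greene--Wu/Schoen--Yau Riemannian convolution). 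Alternatively, one can smooth $\rho$ by the heat semigroup of $g(0)$ for a fixed short time and use Bernstein-type gradient and Hessian estimates, which again need only bounded curvature. With either substitution your argument closes; without it, the key Hessian bound is unjustified.
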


Now using the function $f$ in the above Lemma and a method of Hamilton, we are able to refine the differential Harnack inequality in Corollary \ref{noncompact harnack p<1 b<2}.

\begin{theorem}\label{refined noncompact harnack p<1 b<2}
Let $(M^n, g_{ij}(t))$, $t\in[0, T]$, be a complete solution to the Ricci flow \eqref{rf} with bounded curvature and nonnegative curvature operator. If $u$ is a smooth positive bounded solution to \eqref{eq2}  with $1-\frac{2(b-1)}{bn}<p<1$, then for $v=\frac{p}{p-1}u^{p-1}$, and $b\in(1,2)$, we have
$$\frac{|\nabla v|^2}{v}-b\frac{v_t}{v}-(b-1)\frac{R}{v}+KR_{max}+\frac{b\alpha}{t}\geq 0$$
on $M\times(0,T]$, where $\alpha=\frac{bn(1-p)}{2-bn(1-p)}$, $K=\max\{\frac{(b\alpha+1)(1-p)}{2}, b\alpha(1-p)+(2-b)\sqrt{\frac{b\alpha(n-1)(1-p)}{2n}}\}$, and $R_{max}=\sup_{M\times[0,T]}R(x,t)$.
\end{theorem}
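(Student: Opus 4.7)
My plan is to run the maximum principle directly on all of $M$, using Hamilton's proper function $f$ from Lemma \ref{distance function} in place of the compactly supported cutoff $\phi$ used in Proposition \ref{local harnack p<1 1<b<2}. Since the first and second covariant derivatives of $f$ are uniformly bounded on $M \times [0, T]$, this eliminates the $R_0$-dependent contributions from $\Delta \phi$ and $|\nabla \phi|$ and, in particular, removes the parameter-$\delta$ Cauchy--Schwarz step that introduced the $\tfrac{1}{2}\sqrt{(1+\delta)/\delta}$ slack in Corollary \ref{noncompact harnack p<1 b<2}.

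Concretely, set $G = t(F + KR_{max}) + b\alpha$; the goal is $G \geq 0$. For each $\epsilon > 0$, I would consider
$$G_\epsilon \;=\; G + \epsilon(1+t)f.$$
Corollary \ref{noncompact harnack p<1 b<2} implies $G$ is bounded below on $M \times [0,T]$, and since $f \to \infty$ at spatial infinity, $G_\epsilon$ attains its infimum at some point $(x_0, t_0) \in M \times [0,T]$. Because $u$ is smooth up to $t = 0$, $F$ is bounded near $t=0$, so $tF \to 0$ and $G_\epsilon|_{t=0} = b\alpha + \epsilon f > 0$. Hence if $G_\epsilon(x_0, t_0) < 0$ then $t_0 > 0$, and at this interior minimum $\nabla G_\epsilon = 0$ and $\mathcal{L}(G_\epsilon) \leq 0$ (noting $(p-1)v > 0$ since $0 < p < 1$ and $v < 0$). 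The gradient identity gives $t_0 \nabla F = -\epsilon(1+t_0)\nabla f$, so $|\nabla F| = O(\epsilon)$ and the term $2p\,\nabla_i F\,\nabla_i v$ from Proposition \ref{evolution F} is $O(\epsilon)$.

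The heart of the argument is then to expand $\mathcal{L}(G_\epsilon) = F + KR_{max} + t\mathcal{L}(F) + O(\epsilon)$ via Proposition \ref{evolution F}, dominate $\partial_t R - 2\nabla_i R\,\nabla_i v + 2R_{ij}\nabla_i v\,\nabla_j v$ by $-R/t$ via Theorem \ref{matrix harnack}, use the trace estimate $|\nabla^2 v + \tfrac{b}{2}Rc|^2 \geq \tfrac{1}{n}(\Delta v + \tfrac{b}{2}R)^2$, and invoke the decomposition $|Rc|^2 = |Rc - \tfrac{R}{n}g|^2 + \tfrac{R^2}{n}$ with $|Rc - \tfrac{R}{n}g|^2 \leq \tfrac{n-1}{n}R^2$ (which holds under nonnegative curvature operator since $|Rc|^2 \leq R^2$). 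Substituting $tF = G - b\alpha - tKR_{max}$ and dropping manifestly nonnegative squares, one reduces the inequality at $(x_0, t_0)$ to a quadratic in $G$ of the shape
$$\frac{1}{b\alpha}\,G^2 + \bigl[1 - c_1(K)\,t R_{max}\bigr]\,G - c_0(K)(t R_{max})^2 - c_0'(K)\,t R_{max} \;\leq\; C\epsilon.$$
With $K$ chosen as in the statement, this quadratic is nonnegative whenever $G \leq 0$, so $G \geq -O(\epsilon)$ at $(x_0, t_0)$, contradicting $G_\epsilon(x_0, t_0) < 0$ once $\epsilon$ is small enough. Letting $\epsilon \to 0$ then yields $G \geq 0$ on $M \times (0, T]$.

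The main obstacle I anticipate is the careful bookkeeping needed to identify $K$ as the maximum of two distinct expressions. The coefficient of $R_{max}$ is fed by two essentially independent mechanisms inside \eqref{eq F}: the linear term $[(1-p)R]\,F$ paired with the $R/t$ output of Theorem \ref{matrix harnack}, which eventually produces $\tfrac{(b\alpha+1)(1-p)}{2}$; and the Ricci-squared term $\tfrac{(b-2)^2}{2}(1-p)|Rc|^2$, which after the traceless split $|Rc - \tfrac{R}{n}g|^2 \leq \tfrac{n-1}{n}R^2$ and completing the square against $\tfrac{1}{n}(\Delta v + \tfrac{b}{2}R)^2$ produces $b\alpha(1-p) + (2-b)\sqrt{\tfrac{b\alpha(n-1)(1-p)}{2n}}$. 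The subtlety, as the authors emphasize just after Theorem 1.1, is that in the FDE regime $p < 1$ several of the terms on the right-hand side of \eqref{eq F} flip sign relative to the PME case, so one must carefully track the signs of $v$, $y$, and the cross terms $R/v$ and $yR/v$ to certify that every unwanted contribution is absorbable into a manifestly nonnegative square.
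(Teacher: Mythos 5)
Your proposal follows essentially the same strategy as the paper's proof: take the global bound from Corollary \ref{noncompact harnack p<1 b<2}, perturb the Harnack quantity by a multiple of Hamilton's proper function $f$, apply the maximum principle on all of $M$, dominate the Harnack-combination $\partial_t R - 2\nabla_i R\nabla_i v + 2R_{ij}\nabla_i v\nabla_j v$ by $-R/t$ via Theorem \ref{matrix harnack}, use the trace inequality $|\nabla^2 v + \tfrac{b}{2}Rc|^2 \geq \tfrac{1}{n}(\Delta v + \tfrac{b}{2}R)^2$ together with $|Rc|^2 \leq R^2$ and the traceless split, and read off $K$ from the two separate mechanisms you identify. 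That part of your plan is correct and matches the paper.

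There is, however, a concrete gap in your choice of perturbation. You take $G_\epsilon = G + \epsilon(1+t)f$, whereas the paper takes $\tilde H = H + \epsilon\psi$ with $\psi = e^{At}f$ and a \emph{tunable} growth rate $A$. That parameter is not cosmetic. At the minimum the gradient identity gives $t_0\nabla F = -\epsilon(1+t_0)\nabla f$, so the term $2pt_0\nabla_i F\nabla_i v$ becomes $-2p\epsilon(1+t_0)\nabla_i f\nabla_i v$, which has magnitude comparable to $\epsilon\psi\,|\nabla v|$ --- this is not $O(\epsilon)$ uniformly, because $|\nabla v|$ is uncontrolled at $(x_0,t_0)$. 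The paper handles it by Cauchy--Schwarz against $\delta\,\epsilon\psi\,t|\nabla v|^2/(-v)$, absorbing the quadratic piece into the genuinely nonnegative squares $\tfrac{1}{b\alpha}(\epsilon\psi)^2$ and $\tfrac{\beta(b-1)}{b}\bigl(t|\nabla v|^2/(-v)\bigr)^2$; the leftover is an order-$\epsilon\psi$ term with coefficient roughly $(C + C/4\delta)\bar v_{max} + (1-p)R_{max}$, which, together with the $(1-p)(-v)\epsilon e^{At}\Delta f$ contribution from $\mathcal{L}(\psi)$, must be beaten by the $\partial_t\psi$ piece. For $\psi = e^{At}f$ this piece is $A\epsilon\psi$ and choosing $A > (C + C/4\delta)\bar v_{max} + (1-p)R_{max}$ closes the contradiction; for $\psi = (1+t)f$ the $\partial_t$ contribution is only $\epsilon f = \epsilon\psi/(1+t)$, which cannot dominate a coefficient depending on $\bar v_{max}$. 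Your remark about ``dropping manifestly nonnegative squares'' compounds the problem, since the square $\bigl(t|\nabla v|^2/(-v)\bigr)^2$ is exactly what must be retained to perform this absorption. Replacing $(1+t)$ by $e^{At}$ and carrying out the absorption carefully repairs the gap and reproduces the paper's argument.
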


\begin{proof}
From \eqref{eq F}, we have
\begin{align*}
\mathcal{L}(F)&=2p\nabla_i F\nabla_iv+[\frac{b-1}{-v}+1-p](\frac{\partial R}{\partial t}-2\nabla_i R\nabla_i v+2R_{ij}\nabla_i v \nabla_j v)\\
&\quad\ +2(1-p)|\nabla^2 v+\frac{b}{2}Rc|^2-\frac{(b-2)^2}{2}(1-p)|Rc|^2-\frac{1}{b}F^2\\
&\quad\ +[(1-p)R+\frac{2(b-1)}{b}\frac{R}{-v}]F -\frac{b-1}{b}y^2+\frac{(b-1)(2-b)}{b}\cdot \frac{R}{v}y.
\end{align*}

Let $H=t(F+K)+d$, then
\begin{align*}
t\mathcal{L}(H)&=t(F+K)+t^2\mathcal{L}(F)\\
&= H-d+2p t\nabla_i H\nabla_iv+t[\frac{b-1}{-v}+1-p]Q-tR[\frac{b-1}{-v}+1-p]\\
&\quad\ +2(1-p)t^2|\nabla^2 v+Rc-\frac{2-b}{2}Rc|^2-\frac{(b-2)^2}{2}(1-p)t^2|Rc|^2-\frac{1}{b}(H-tK-d)^2\\
&\quad\ +[(1-p)R+\frac{2(b-1)}{b}\frac{R}{-v}]t(H-tK-d) -\frac{b-1}{b}(ty)^2+\frac{(b-1)(2-b)}{b}\cdot \frac{t^2R}{v}y\\
&\geq H-d+2p t\nabla_i H\nabla_iv-tR[\frac{b-1}{-v}+1-p]\\
&\quad\ +\frac{2}{b^2n(1-p)}\left[H-tK-d-(b-1)(-ty)-\frac{b(2-b)}{2}(1-p)tR\right]^2\\
&\quad\ -\frac{(b-2)^2}{2}(1-p)t^2R^2-\frac{1}{b}(H-tK-d)^2+[(1-p)R+\frac{2(b-1)}{b}\frac{R}{-v}]t(H-tK-d) \\
&\quad\ -\frac{b-1}{b}(ty)^2+\frac{(b-1)(2-b)}{b}\cdot (\frac{tR}{-v})(-ty)\\
%&\geq H-d+2p t\nabla_i H\nabla_iv-tR[\frac{b-1}{-v}+1-p]+\frac{1}{b\alpha}(H-tK-d)^2\\
%&\quad\ +\frac{b-1}{b}[\frac{2(b-1)}{bn(1-p)}-1](-ty)^2-\frac{(b-2)^2(n-1)}{2n}(1-p)t^2R^2\\
%&\quad\ -\frac{4(b-1)}{b^2n(1-p)}(-ty)(H-tK-d)-\frac{2(2-b)}{bn(1-p)}(1-p)tR(H-tK-d)\\
%&\quad\ +[(1-p)R+\frac{2(b-1)}{b}\frac{R}{-v}]t(H-tK-d)+\frac{(b-1)(2-b)}{b}\cdot (\frac{tR}{-v})(-ty)\\
&\geq H-d+2p t\nabla_i H\nabla_iv+(b-1)(-\frac{2d}{b}-1)\frac{tR}{-v}+[\frac{2(2-b)d}{bn(1-p)}-d-1](1-p)tR\\
&\quad\ +\frac{1}{b\alpha}(H-tK-d)^2+\frac{b-1}{b}[\frac{2(b-1)}{bn(1-p)}-1](-ty)^2-\frac{(b-2)^2(n-1)}{2n}(1-p)t^2R^2\\
&\quad\ -\frac{4(b-1)}{b^2n(1-p)}(-ty)(H-tK-d)+\frac{2(b-1)}{b}\frac{tR}{-v}(H-tK)\\
&\quad\ -[\frac{2(2-b)}{bn(1-p)}-1](1-p)tR(H-tK).
\end{align*}

Let $\tilde{H}=H+\epsilon\psi$, where $\psi=e^{At}f$ for some constant $A>0$ to be determined, and $f$ being the function in Lemma \ref{distance function}. Then
\begin{align*}
& t\mathcal{L}(\tilde{H})= t\mathcal{L}(H) + tA\epsilon\psi -(p-1)vt\epsilon e^{At}\Delta f.
\end{align*}

From Corollary \ref{noncompact harnack p<1 b<2}, we know that $\tilde{H}>0$ at $t=0$ and outside a fixed compact subset of $M$ for $t\in(0,T]$. Suppose that $\tilde{H}$ reaches $0$ for the first time at some point $x_0\in M$ when $t=t_0>0$. Then we have at $(x_0, t_0)$,
$$H=-\epsilon\psi < 0,\qquad \nabla\tilde{H}=0,\qquad\textrm{and}\qquad \mathcal{L}(\tilde{H})\leq 0.$$
Setting $d=b\alpha$, and $\beta=\frac{2(b-1)}{bn(1-p)}-1>0$ we have
\begin{align*}
0\geq & t_0\mathcal{L}(\tilde{H})\\
\geq &  -\epsilon\psi-b\alpha-2p t_0\epsilon e^{At}\nabla_i f\nabla_iv+[\frac{2(2-b)b\alpha}{bn(1-p)}-b\alpha-1](1-p)t_0R+\frac{1}{b\alpha}(\epsilon\psi+t_0K+b\alpha)^2\\
\quad\ &+\frac{\beta(b-1)}{b}(-t_0y)^2-\frac{(b-2)^2(n-1)}{2n}(1-p)t_0^2R^2\\
\quad\ &+[\frac{2(2-b)}{bn(1-p)}-1](1-p)t_0R(\epsilon\psi+t_0K) + t_0A\epsilon\psi -(p-1)vt_0\epsilon e^{At_0}\Delta f\\
\end{align*}
\begin{align*}
\geq& -Ct_0\epsilon\psi|\nabla v|+[\frac{2(2-b)b\alpha}{bn(1-p)}-b\alpha-1](1-p)t_0R+\frac{1}{b\alpha}(\epsilon\psi)^2+\frac{1}{b\alpha}t_0^2K^2+\epsilon\psi+\frac{2}{b\alpha}t_0K\epsilon\psi\\
\quad\ &+2t_0K+\frac{\beta(b-1)}{b}(\frac{t_0|\nabla v|^2}{-v})^2-\frac{(b-2)^2(n-1)}{2n}(1-p)t_0^2R^2\\
\quad\ &+[\frac{2(2-b)}{bn(1-p)}-1](1-p)t_0R(\epsilon\psi+t_0K) + t_0A\epsilon\psi -C\bar{v}_{max}t_0\epsilon\psi\\
%\end{align*}
%\begin{align*}
\geq& -\delta\epsilon\psi\frac{t_0|\nabla v|^2}{-v}-\frac{C}{4\delta}t_0(-v)\epsilon\psi+\frac{1}{b\alpha}(\epsilon\psi)^2+ \frac{\beta(b-1)}{b}(\frac{t_0|\nabla v|}{-v})^2\\
\quad\ &-[b\alpha+1](1-p)t_0R+2t_0K + \frac{1}{b\alpha}t_0^2K^2-\frac{(b-2)^2(n-1)}{2n}(1-p)t_0^2R^2\\
\quad\ &-(1-p)t_0^2RK + \left(A+\frac{2}{b\alpha}K+[\frac{2(2-b)}{bn(1-p)}-1](1-p)R-C\bar{v}_{max}\right)t_0\epsilon\psi.
\end{align*}

We may choose a suitable $\delta$ so that $$-\delta\epsilon\psi\frac{t_0|\nabla v|^2}{-v}+\frac{1}{b\alpha}(\epsilon\psi)^2+ \frac{\beta(b-1)}{b}(\frac{t_0|\nabla v|^2}{-v})^2\geq0.$$
Notice that by choosing $K=\max\{\frac{(b\alpha+1)(1-p)}{2}, b\alpha(1-p)+(2-b)\sqrt{\frac{b\alpha(n-1)(1-p)}{2n}}\}R_{max}$, we can make both
$$-[b\alpha+1](1-p)R+2K\geq0,$$
and
$$\frac{1}{b\alpha}K^2-\frac{(b-2)^2(n-1)}{2n}(1-p)R^2-(1-p)RK\geq0.$$

Hence if we choose $A>(\frac{C}{4\delta}+C)\bar{v}_{max}+(1-p)R_{max}$, then we have
\begin{align*}
0\geq & t\mathcal{L}(\tilde{H})\\
\geq& \left[A+\frac{2}{b\alpha}K-(1-p)R-(C+\frac{C}{4\delta})\bar{v}_{max}\right]t_0\epsilon\psi\\
>&0,
\end{align*}
a contradiction.
Therefore, $\tilde{H}>0$ for all $t\in[0,T]$. Letting $\epsilon\rightarrow 0$, we get $$H\geq0.$$
\end{proof}

As in  \cite{CaZh2014}, consequently we have the following Harnack inequalities.
\begin{corollary}\label{cor5}
Suppose that $(M, g_{ij}(t))$, $t\in[0, T]$, is a complete solution of the Ricci flow \eqref{rf} with bounded curvature and nonnegative curvature operator, and $u$ is a bounded smooth positive solution to \eqref{eq2} for $1-\frac{2(b-1)}{bn}<p<1$ and $b\in(1,2)$. Let $\bar{v}=-v=\frac{p}{1-p}u^{p-1}$, if $\bar{v}_{min}=\inf_{M\times[0,T]}\bar{v}>0$, then for any points $x_1$,$ x_2\in M$ and $0<t_1<t_2$, we have
\begin{equation*}\label{eq cor5}
\bar{v}(x_2, t_2)\leq \bar{v}(x_1, t_1)\cdot (\frac{t_2}{t_1})^{\alpha}exp\left( \frac{\Gamma}{\bar{v}_{min}}+\frac{KR_{max}}{b}(t_2-t_1)\right),
\end{equation*}
where $\alpha$, $K$ and $R_{max}$ are the same constants as in Theorem \ref{refined noncompact harnack p<1 b<2}, and $\Gamma=\inf_{\gamma}\int_{t_1}^{t_2}(\frac{b-1}{b}R+\frac{b}{4}\left|\frac{d\gamma}{d\tau}\right|^2_{g_{ij}(\tau)})d\tau$ with the infimum taking over all smooth curves $\gamma(\tau)$ in $M$, $\tau\in[t_1, t_2]$, with $\gamma(t_1)=x_1$ and $\gamma(t_2)=x_2$.

\end{corollary}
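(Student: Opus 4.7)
The plan is to integrate the differential Harnack inequality of Theorem \ref{refined noncompact harnack p<1 b<2} along a smooth space-time curve joining $(x_1,t_1)$ and $(x_2,t_2)$, exactly as Hamilton's method converts a Li--Yau--Hamilton estimate into a classical Harnack inequality. Since the statement is phrased in terms of $\bar v=-v>0$, the first move is to rewrite Theorem \ref{refined noncompact harnack p<1 b<2} in those variables; using $\nabla v=-\nabla\bar v$, $v_t=-\bar v_t$, and $v=-\bar v$, the inequality becomes
\begin{equation*}
b\,\partial_t\log\bar v \;\le\; -\frac{|\nabla\bar v|^2}{\bar v}+(b-1)\frac{R}{\bar v}+KR_{\max}+\frac{b\alpha}{t}.
\end{equation*}

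Next, let $\gamma:[t_1,t_2]\to M$ be any smooth curve with $\gamma(t_1)=x_1$ and $\gamma(t_2)=x_2$, and compute
\begin{equation*}
b\,\frac{d}{d\tau}\log\bar v(\gamma(\tau),\tau) \;=\; b\,\partial_t\log\bar v + \frac{b\,\nabla\bar v\cdot\gamma'}{\bar v}.
\end{equation*}
Inserting the previous estimate and completing the square in the gradient terms via
\begin{equation*}
-\frac{|\nabla\bar v|^2}{\bar v}+\frac{b\,\nabla\bar v\cdot\gamma'}{\bar v}
=-\frac{1}{\bar v}\Bigl|\nabla\bar v-\tfrac{b}{2}\gamma'\Bigr|^2+\frac{1}{\bar v}\cdot\frac{b^2}{4}|\gamma'|_{g(\tau)}^2
\;\le\;\frac{b^2}{4\bar v}|\gamma'|_{g(\tau)}^2,
\end{equation*}
we obtain
\begin{equation*}
b\,\frac{d}{d\tau}\log\bar v(\gamma(\tau),\tau)\;\le\;\frac{1}{\bar v}\!\left(\frac{b^2}{4}|\gamma'|_{g(\tau)}^2+(b-1)R\right)+KR_{\max}+\frac{b\alpha}{t}.
\end{equation*}

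Because $R\ge0$ and $|\gamma'|^2\ge0$, and the running value of $\bar v$ along the curve is bounded below by $\bar v_{\min}>0$, we may replace $1/\bar v$ by $1/\bar v_{\min}$ without reversing the inequality. Dividing by $b$ gives
\begin{equation*}
\frac{d}{d\tau}\log\bar v(\gamma(\tau),\tau)\;\le\;\frac{1}{\bar v_{\min}}\!\left(\frac{b-1}{b}R+\frac{b}{4}|\gamma'|_{g(\tau)}^2\right)+\frac{KR_{\max}}{b}+\frac{\alpha}{\tau}.
\end{equation*}
Integrating from $t_1$ to $t_2$, exponentiating, and then taking the infimum over all admissible curves $\gamma$ yields precisely the claimed inequality, with the $\alpha$-logarithmic factor producing $(t_2/t_1)^{\alpha}$ and the remaining terms forming the exponential factor involving $\Gamma/\bar v_{\min}$ and $KR_{\max}(t_2-t_1)/b$.

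The only non-routine point is the sign/orientation bookkeeping in the passage from $v<0$ to $\bar v>0$ together with the completion of the square, which requires the coefficient of $|\gamma'|^2$ to match the $b/4$ appearing in the definition of $\Gamma$; the hypothesis $\bar v_{\min}>0$ is essential here precisely so that the $1/\bar v$ factor in the gradient and curvature terms can be bounded uniformly. Everything else is a direct integration of the differential Harnack estimate already established in Theorem \ref{refined noncompact harnack p<1 b<2}.
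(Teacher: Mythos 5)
Your proposal is correct and follows exactly the approach the paper intends: rewrite Theorem \ref{refined noncompact harnack p<1 b<2} in terms of $\bar{v}=-v>0$, integrate $\frac{d}{d\tau}\log\bar v$ along a space-time curve $\gamma$, complete the square in the gradient/velocity cross-term (which produces the $\frac{b}{4}|\gamma'|^2$ coefficient matching $\Gamma$), bound $1/\bar v$ by $1/\bar v_{\min}$ using $R\geq 0$ and $|\gamma'|^2\geq0$, exponentiate, and take the infimum over paths. This is the standard Hamilton integration argument the paper invokes by reference to \cite{CaZh2014}, and your sign bookkeeping and use of the hypothesis $\bar v_{\min}>0$ are both handled correctly.
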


\begin{corollary}\label{cor6}
Suppose that $(M, g_{ij}(t))$, $t\in[0, T]$, is a complete solution of the Ricci flow \eqref{rf} with bounded curvature and nonnegative curvature operator, and $u$ is a bounded smooth positive solution to \eqref{eq2} for $1-\frac{2(b-1)}{bn}<p<1$ and $b\in(1,2)$. Let $\bar{v}=-v=\frac{p}{1-p}u^{p-1}$, then for any points $x_1$,$ x_2\in M$ and $0<t_1<t_2$, we have
\begin{equation*}\label{eq cor6}
\bar{v}(x_2, t_2)-\bar{v}(x_1, t_1)\leq \alpha \bar{v}_{max}\ln \frac{t_2}{t_1} + [\frac{b-1}{b}+\frac{K}{b}\bar{v}_{max}]R_{max}(t_2-t_1)+\frac{b}{4}\frac{d_{t_1}^2(x_1,x_2)}{t_2-t_1}
\end{equation*}
where $\alpha$, $K$ and $R_{max}$ are the same constants as in Theorem \ref{refined noncompact harnack p<1 b<2}, and $\bar{v}_{max}=\sup_{M\times[0,T]}\bar{v}$.
\end{corollary}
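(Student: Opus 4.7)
The plan is to convert the pointwise differential Harnack estimate of Theorem \ref{refined noncompact harnack p<1 b<2} into an integrated inequality by integrating along a space-time path from $(x_1,t_1)$ to $(x_2,t_2)$, following the standard route from Li-Yau type estimates to classical Harnack inequalities. First I would rewrite the theorem in terms of $\bar v=-v>0$. Since $v<0$, multiplying through by $v$ reverses the inequality; together with $v_t=-\bar v_t$ and $|\nabla v|^2=|\nabla\bar v|^2$ this yields
$$\bar v_t\leq -\frac{|\nabla\bar v|^2}{b}+\frac{b-1}{b}R+\frac{K}{b}R_{max}\bar v+\frac{\alpha}{t}\bar v.$$

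Next, for a smooth curve $\gamma:[t_1,t_2]\to M$ joining $x_1$ to $x_2$, the chain rule gives $\frac{d}{d\tau}\bar v(\gamma(\tau),\tau)=\bar v_t+\langle\nabla\bar v,\dot\gamma\rangle_{g(\tau)}$. I would apply the weighted AM-GM inequality $\langle\nabla\bar v,\dot\gamma\rangle\leq\frac{|\nabla\bar v|^2}{b}+\frac{b|\dot\gamma|^2}{4}$, which is designed precisely to absorb the $-|\nabla\bar v|^2/b$ term above. Using $\bar v\leq\bar v_{max}$ and $R\leq R_{max}$ on the remaining terms (each of which has the right sign) then gives
$$\frac{d}{d\tau}\bar v(\gamma(\tau),\tau)\leq\frac{b-1}{b}R_{max}+\frac{K}{b}R_{max}\bar v_{max}+\frac{\alpha\bar v_{max}}{\tau}+\frac{b|\dot\gamma|^2_{g(\tau)}}{4}.$$

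The last step is to integrate from $t_1$ to $t_2$. The constant terms contribute $[\frac{b-1}{b}+\frac{K}{b}\bar v_{max}]R_{max}(t_2-t_1)$, while $\int_{t_1}^{t_2}\frac{d\tau}{\tau}=\ln(t_2/t_1)$ produces the $\alpha\bar v_{max}\ln(t_2/t_1)$ piece. For the kinetic term I would use that nonnegative curvature operator forces $Rc\geq 0$, hence $\partial_t g=-2Rc\leq 0$, so the metric is pointwise nonincreasing and $|\dot\gamma|^2_{g(\tau)}\leq|\dot\gamma|^2_{g(t_1)}$ for $\tau\geq t_1$. Choosing $\gamma$ to be a minimizing $g(t_1)$-geodesic reparametrized with constant $g(t_1)$-speed then yields $\int_{t_1}^{t_2}|\dot\gamma|^2_{g(\tau)}d\tau\leq d_{t_1}^2(x_1,x_2)/(t_2-t_1)$, and assembling the pieces gives the claimed inequality. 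There is no genuine obstacle here: the substantive work lives entirely in Theorem \ref{refined noncompact harnack p<1 b<2}, and the only points requiring care are the sign flip when multiplying by $v<0$ and the elementary metric-monotonicity comparison enabled by $Rc\geq 0$.
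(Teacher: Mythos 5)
Your proof is correct and follows the standard Li-Yau integration route, which is precisely the argument the paper intends (it simply refers to \cite{CaZh2014} rather than spelling it out). The sign flip after multiplying the inequality of Theorem \ref{refined noncompact harnack p<1 b<2} by $v<0$, the weighted Cauchy--Schwarz step that absorbs $-|\nabla\bar v|^2/b$, the crude bounds $R\le R_{max}$, $\bar v\le\bar v_{max}$, and the metric monotonicity $\partial_t g=-2Rc\le 0$ (so a constant-speed $g(t_1)$-geodesic gives $\int_{t_1}^{t_2}|\dot\gamma|^2_{g(\tau)}\,d\tau\le d_{t_1}^2(x_1,x_2)/(t_2-t_1)$) are all exactly the points the authors rely on.
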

\bigskip

\subsection{The case of $b\geq 2$}
In this case, we first have

\begin{proposition}\label{local harnack p<1 b>2}
Let $(M^n,  g_{ij} (t))$, $t\in[0, T]$, be a complete solution to the Ricci flow \eqref{rf} with bounded curvature and nonnegative curvature operator. If $u$ is a smooth positive solution to \eqref{eq2} with $p\in(1-\frac{2}{bn},1)$ and $b\geq2$, then for $v=\frac{p}{p-1}u^{p-1}$, any point $O\in M$, any constants $R_0>0$, $\delta_1\in(0,1)$ and $\delta_2>0$, we have
\begin{align*}
\frac{|\nabla v|^2}{v}-b\frac{v_t}{v}-(b-1)\frac{R}{v}&\geq -\frac{b\alpha}{t}\left[1+\frac{1}{2}\sqrt{\frac{1}{1-\delta_1}+\delta_2}+C_1(\delta_1,\delta_2)R_{max}+C_2\frac{t\bar{v}_{max}}{R_0^2}\right]
\end{align*}
on $\coprod_{t\in(0, T]}B_t(O, R_0)\times\{t\}$, where $$C_1(\delta_1,\delta_2)=17-p+\frac{2(b-2)}{bn}+\sqrt{b\alpha\left[\frac{(2-b)^2(1-p)}{2}+\frac{\alpha(b-2)^2}{bn^2\delta_1}\right]+\frac{(1-p)^2}{\delta_2}},$$ $$C_2=(8n+128)(1-p)+\frac{32b\alpha p^2}{b-1},$$
$\alpha=\frac{bn(1-p)}{2-bn(1-p)}$, $\bar{v}_{max}=\max_{\coprod_{t\in[0, T]}B_t(O, 2R_0)\times\{t\}}\{-v\}$ and $R_{max}=\sup_{M\times[0,T]}R$.
\end{proposition}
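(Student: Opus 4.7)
The plan is to adapt the proof of Proposition~\ref{local harnack p<1 1<b<2} to the case $b\ge 2$, tracking two new difficulties. Starting from \eqref{eq F} with $0<p<1$, I multiply through by $t\phi^2$, where $\phi$ is the cutoff from \eqref{eq phi}, and form $t\phi\,\mathcal{L}(t\phi F)$ in order to apply the maximum principle on $\coprod_{t\in[0,T]}B_t(O,2R_0)\times\{t\}$. The Hamilton trace Harnack inequality (Theorem~\ref{matrix harnack}), applied with $V_i=-\nabla_i v$, guarantees that the forcing $Q$ in \eqref{eq Q} is nonnegative. The trace inequality $|\nabla^2v+\tfrac{b}{2}Rc|^2\ge \tfrac{1}{n}(\Delta v+\tfrac{b}{2}R)^2$, together with the identity $(1-p)\Delta v = y-z-(1-p)R$ coming from \eqref{eqv}, converts the Hessian-square into
\begin{equation*}
2(1-p)t^2\phi^2\bigl|\nabla^2v+\tfrac{b}{2}Rc\bigr|^2\ \ge\ \tfrac{2}{b^2n(1-p)}\bigl[\,b(\tilde y-\tilde z)+\tfrac{b(b-2)(1-p)}{2}t\phi R\bigr]^2,
\end{equation*}
which, combined with the $-\tfrac{1}{b}(\tilde y-b\tilde z)^2-\tfrac{b-1}{b}\tilde y^2$ pieces in \eqref{eq F}, yields the leading good term $\tfrac{1}{b\alpha}(\tilde y-b\tilde z)^2$ and a $(-\tilde y)^2$ reserve with coefficient $\tfrac{b-1}{b}\bigl[\tfrac{2(b-1)}{bn(1-p)}-1\bigr]$.

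The genuinely new feature for $b\ge 2$ is that the term $-\tfrac{(b-1)(b-2)}{b}y\tfrac{R}{v}$ in \eqref{eq F} flips sign at $b=2$ and is now \emph{bad}. I plan to control it by a Cauchy--Schwarz split with parameter $\delta_1\in(0,1)$,
\begin{equation*}
-\tfrac{(b-1)(b-2)}{b}(-\tilde y)\bigl(t\phi\tfrac{R}{-v}\bigr)\ \ge\ -\tfrac{(b-1)(b-2)\delta_1}{b}(-\tilde y)^2 -\tfrac{(b-1)(b-2)}{4b\delta_1}\bigl(t\phi\tfrac{R}{-v}\bigr)^2,
\end{equation*}
and then absorb the $(-\tilde y)^2$ contribution into the reserve above. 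For this absorption to leave a nonnegative coefficient, one needs $\tfrac{2(b-1)}{bn(1-p)}-1\ge (b-2)\delta_1$; since $\delta_1<1$, this reduces to $\tfrac{2(b-1)}{bn(1-p)}\ge b-1$, i.e.\ exactly $p>1-\tfrac{2}{bn}$. The residual $(t\phi\tfrac{R}{-v})^2$ piece, estimated using $t\phi\tfrac{R}{-v}\le -\tilde y$ on one factor and $R\le R_{max}$ on the other, contributes the $\tfrac{\alpha(b-2)^2}{bn^2\delta_1}$ summand inside the square root of $C_1(\delta_1,\delta_2)$, while the linear-in-$R$ remainder produced after rearranging accounts for the extra $\tfrac{2(b-2)}{bn}$ that appears outside the square root.

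The rest of the argument then follows the $1<b<2$ template. The gradient cross term $2pt^2\phi^2\nabla_iF\nabla_iv$ is handled at the interior minimum by $F\nabla\phi=-\phi\nabla F$ together with the cutoff bounds $|\nabla\phi|\le 8R_0^{-1}\phi^{1/2}$, $|\phi_t|\le 16R_{max}$, and $\Delta\phi\ge -8nR_0^{-2}$, producing the $\tfrac{t\bar v_{max}}{R_0^2}$ contribution with coefficient $C_2$. The surviving linear piece $(1-p)t_0R_{max}$ is then split by a weighted AM--GM with parameter $\delta_2$, which after combining with the $\tfrac{b\alpha}{4}$ constant yields both the $\tfrac{(1-p)^2}{\delta_2}$ term inside the square root and the free constant $\tfrac{1}{2}\sqrt{\tfrac{1}{1-\delta_1}+\delta_2}$. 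Collecting everything at $(x_0,t_0)$ reduces $0\ge t_0\phi\,\mathcal{L}(t_0\phi F)(x_0,t_0)$ to a quadratic inequality of the form $0\ge \tfrac{1}{b\alpha}(\tilde y-b\tilde z)^2+B(\tilde y-b\tilde z)+C$, which I resolve by the quadratic formula together with $\sqrt{A+B}\le\sqrt A+\sqrt B$, then divide by $t_0\phi\le 1$ and restrict to $\phi\equiv 1$.

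The main obstacle, I anticipate, lies in the bookkeeping of the second paragraph: the Cauchy--Schwarz split of $-\tfrac{(b-1)(b-2)}{b}(-\tilde y)\bigl(t\phi\tfrac{R}{-v}\bigr)$, the $(-\tilde y)^2$ reserve from the trace inequality, and the residual terms in $R$ and $R^2$ must be organised so that (i) the final $(-\tilde y)^2$ coefficient stays nonnegative under the hypothesis $p>1-\tfrac{2}{bn}$, which is exactly the reason this range is sharper than in Proposition~\ref{local harnack p<1 1<b<2}, and (ii) the leftover contributions assemble precisely into the explicit constants $C_1(\delta_1,\delta_2)$ and $C_2$ stated in the proposition.
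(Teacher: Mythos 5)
Your plan correctly identifies the qualitative new difficulty for $b\geq 2$ -- the term $-\tfrac{(b-1)(b-2)}{b}\,y\,\tfrac{R}{v}$ changes sign -- but the proposed way of handling it does not go through, and it is also different from the paper's route in a way that introduces a genuine gap.

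The paper does \emph{not} apply the $\delta_1$--Cauchy--Schwarz split to the bad cross term $-\tfrac{(b-1)(b-2)}{b}(-\tilde{y})\bigl(t\phi\tfrac{R}{-v}\bigr)$. Instead it first bounds $t\phi\tfrac{R}{-v}\leq -\tilde{y}$ so that this term becomes $\geq -\tfrac{(b-1)(b-2)}{b}(-\tilde{y})^2$, merges it with $-\tfrac{b-1}{b}\tilde{y}^2$ and the $(-\tilde{y})^2$ piece coming from the square $\tfrac{2}{b^2n(1-p)}[\cdots]^2$ to get a single reserve $\tfrac{(b-1)^2}{b\alpha}(-\tilde{y})^2$, and only \emph{then} applies the $\delta_1$--Cauchy--Schwarz to the separate cross term $-\tfrac{2(b-1)(b-2)}{bn}\,t_0\phi R\,(-\tilde{y})$ that appears on expanding the square. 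This is a split between $(-\tilde{y})^2$ and $(t_0\phi R)^2$, so the residual $-\tfrac{\alpha(b-2)^2}{bn^2\delta_1}(t_0\phi R)^2$ is controlled directly by $R_{max}^2$; the leftover $(1-\delta_1)\tfrac{(b-1)^2}{b\alpha}(-\tilde{y})^2$, combined with $(t\phi\tfrac{R}{-v})^2\leq(-\tilde{y})^2$ and the linear term $-(b-1)t\phi\tfrac{R}{-v}$, is bounded below by $-\tfrac{b\alpha}{4(1-\delta_1)}$, which is where the $\tfrac{1}{1-\delta_1}$ in the statement comes from.

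Your version instead produces the residual $-\tfrac{(b-1)(b-2)}{4b\delta_1}\bigl(t\phi\tfrac{R}{-v}\bigr)^2$, which is a \emph{negatively}-signed quadratic in the quantity $\tfrac{R}{-v}$, and that quantity has no a priori bound: the factor $\tfrac{1}{-v}$ is unbounded, so there is no way to estimate this residual by $R_{max}^2$. The claim that it can be handled ``using $t\phi\tfrac{R}{-v}\leq -\tilde{y}$ on one factor and $R\leq R_{max}$ on the other'' does not close the loop: the first substitution merely recreates the cross term, and the second still leaves the uncontrolled $\tfrac{1}{-v}$. Moreover, this estimate leaves the other cross term $-\tfrac{2(b-1)(b-2)}{bn}\,t_0\phi R\,(-\tilde{y})$ unaccounted for; absorbing both of these into the single $(-\tilde{y})^2$ reserve (via $(t\phi\tfrac{R}{-v})^2\leq (-\tilde{y})^2$ plus another Cauchy--Schwarz) can be checked to demand a strictly smaller $\alpha$, i.e.\ a strictly narrower range of $p$ than the stated $p>1-\tfrac{2}{bn}$, so this version of the argument does not recover the proposition. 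The fix is to postpone the Cauchy--Schwarz and first convert the bad term to $-\tfrac{(b-1)(b-2)}{b}(-\tilde{y})^2$, as the paper does, so that the quadratic-in-$\tfrac{R}{-v}$ term that ultimately survives has a \emph{positive} coefficient and can be used in a completion of the square against $-(b-1)t\phi\tfrac{R}{-v}$.
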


\begin{proof}
Let $\phi(x,t)$ be the same cut-off function as in \eqref{eq phi}. It follows from \eqref{eq F} that
\begin{align*}
t\phi\mathcal{L}(t\phi F)&= \phi(\tilde{y}-b\tilde{z})+t\phi_t(\tilde{y}-b\tilde{z})-(p-1)tv\Delta\phi(\tilde{y}-b\tilde{z}) - 2(p-1)t^2v\phi\nabla_i\phi\nabla_i F\\
&\quad\ +2p t^2\phi^2\nabla_i F\nabla_iv+t\phi^2[\frac{b-1}{-v}+1-p]Q - t\phi^2R[\frac{b-1}{-v}+1-p]\\
&\quad\ +2(1-p)t^2\phi^2|\nabla^2 v+Rc+\frac{b-2}{2}Rc|^2-\frac{(b-2)^2}{2}(1-p)t^2\phi^2|Rc|^2-\frac{1}{b}(\tilde{y}-b\tilde{z})^2\\
&\quad\ +[(1-p)R+\frac{2(b-1)}{b}\frac{R}{-v}]t\phi(\tilde{y}-b\tilde{z}) -\frac{b-1}{b}\tilde{y}^2-\frac{(b-1)(b-2)}{b}\cdot \frac{R}{v}t\phi\tilde{y},
\end{align*}
where $\tilde{y}=t\phi y$, $\tilde{z}=t\phi z$, and $Q\geq 0$ is the quantity in \eqref{eq Q}.

If $t\phi F\geq 0$ in $\coprod_{t\in[0, T]}B_t(O, 2R_0)\times\{t\}$, then we are done. Otherwise, since $t\phi F=0$ on the parabolic boundary of $\coprod_{t\in[0, T]}B_t(O, 2R_0)\times\{t\}$, we may assume that $t\phi F$ achieves a negative minimum for the first time at $t_0>0$ and some interior point $x_0$. Thus, at $(x_0, t_0)$, we have
$$\tilde{y}-b\tilde{z}=t_0\phi F<0, \qquad \nabla\phi=-\phi\nabla F,\qquad \textrm{and}\qquad \mathcal{L}(t\phi F)(x_0,t_0)\leq0.$$
Moreover, since
$$- 2p t_0^2\phi F \nabla_i\phi\nabla_i v\geq 2p t_0^2\phi F|\nabla\phi||\nabla v|\geq t_0\phi F\frac{16p}{R_0}(-\tilde{y})^{\frac{1}{2}}(-t_0v)^{\frac{1}{2}},$$
%and
%$$-\frac{b-1}{b}\tilde{y}^2+\frac{(b-1)(2-b)}{b}\cdot t\phi\frac{R}{v}\tilde{y}\leq -\frac{(b-1)^2}{b}\tilde{y}^2\leq0,$$
we get
\begin{align*}
0\geq &t_0\phi\mathcal{L}(t\phi F)\\
\geq & (\tilde{y}-b\tilde{z})+ 16t_0R_{max}(\tilde{y}-b\tilde{z}) + (8n+128)\frac{(p-1)t_0v}{R_0^2}(\tilde{y}-b\tilde{z})+
\frac{16p}{R_0}(-\tilde{y})^{\frac{1}{2}}(-t_0v)^{\frac{1}{2}}(\tilde{y}-b\tilde{z})\\
\quad\ &-(1-p)t_0\phi R-(b-1)t_0\phi(\frac{R}{-v})+\frac{2}{b^2n(1-p)}[\tilde{y}-b\tilde{z}-(b-1)(-\tilde{y})+\frac{b(b-2)}{2}(1-p)t_0\phi R]^2\\
\quad\ & -\frac{(b-2)^2}{2}(1-p)t_0^2\phi^2R^2-\frac{1}{b}(\tilde{y}-b\tilde{z})^2+[(1-p)R+\frac{2(b-1)}{b}\frac{R}{-v}]t_0\phi(\tilde{y}-b\tilde{z})\\
\quad\ &-\frac{b-1}{b}\tilde{y}^2-\frac{(b-1)(b-2)}{b}\cdot (-\tilde{y})^2\\
\geq & (\tilde{y}-b\tilde{z})+ 16t_0R_{max}(\tilde{y}-b\tilde{z}) + (8n+128)\frac{(p-1)t_0v}{R_0^2}(\tilde{y}-b\tilde{z}) +
\beta(-\tilde{y})(\tilde{y}-b\tilde{z})\\
\quad\ &+\frac{64p^2}{\beta R^2_0}(-t_0v)(\tilde{y}-b\tilde{z})-(1-p)t_0\phi R +\frac{1}{b\alpha}(\tilde{y}-b\tilde{z})^2 +\frac{(b-1)^2}{b\alpha}(-\tilde{y})^2-(b-1)t\phi(\frac{R}{-v})\\
\quad\ &-\frac{2(b-1)(b-2)}{bn}t_0\phi R(-\tilde{y})-\frac{4(b-1)}{b^2n(1-p)}(-\tilde{y})(\tilde{y}-b\tilde{z})+\frac{2(b-2)}{bn(1-p)}(1-p)t_0\phi R(\tilde{y}-b\tilde{z})\\
\quad\ &-\frac{(b-2)^2}{2}(1-p)t_0^2\phi^2R^2+[(1-p)R+\frac{2(b-1)}{b}\frac{R}{-v}]t_0\phi(\tilde{y}-b\tilde{z}).
\end{align*}
If we choose $\beta=\frac{2(b-1)}{b\alpha}$ and $0<\delta_1<1$, then we have
\begin{align*}
0\geq & (\tilde{y}-b\tilde{z})+ (17-p+\frac{2(b-2)}{bn})t_0R_{max}(\tilde{y}-b\tilde{z}) + [(8n+128)(1-p)+\frac{64p^2}{\beta}]\frac{t_0\bar{v}_{max}}{R_0^2}(\tilde{y}-b\tilde{z})
\\
\quad\ &-(1-p)t_0\phi R +\frac{1}{b\alpha}(\tilde{y}-b\tilde{z})^2-(\frac{2(b-1)}{b\alpha}-\beta)(-\tilde{y})(\tilde{y}-b\tilde{z})+\frac{(b-1)^2}{b\alpha}(1-\delta_1)\cdot t^2\phi^2(\frac{R}{-v})^2\\
\quad\ &-(b-1)t\phi(\frac{R}{-v})-\left[\frac{(b-2)^2}{2}(1-p)+\frac{\alpha(b-2)^2}{bn^2\delta_1}\right]t_0^2\phi^2R^2\\
\geq & \frac{1}{b\alpha}(\tilde{y}-b\tilde{z})^2+\left[1+(17-p+\frac{2(b-2)}{bn})t_0R_{max}+[(8n+128)(1-p)+\frac{64p^2}{\beta}]\frac{t_0\bar{v}_{max}}{R_0^2}\right](\tilde{y}-b\tilde{z})\\
\quad\ & -\left[\frac{(b-2)^2}{2}(1-p)+\frac{\alpha(b-2)^2}{bn^2\delta_1}\right]t_0^2R_{max}^2-(1-p)t_0R_{max}-\frac{b\alpha}{4(1-\delta_1)}.
\end{align*}

Therefore, we have
\begin{align*}
\tilde{y}-b\tilde{z}&\geq -b\alpha\left[1+(17-p+\frac{2(b-2)}{bn})t_0R_{max}+[(8n+128)(1-p)+\frac{64p^2}{\beta}]\frac{t_0\bar{v}_{max}}{R_0^2}\right]\\
&\quad\ -\sqrt{b\alpha\left(\left[\frac{(b-2)^2}{2}(1-p)+\frac{\alpha(b-2)^2}{bn^2\delta_1}\right]t_0^2R_{max}^2+(1-p)t_0R_{max}+\frac{b\alpha}{4(1-\delta_1)}\right)}.
\end{align*}
This finishes the proof.
\end{proof}

Assuming further that $u$ is bounded and letting $R_0\rightarrow \infty$, we have

\begin{corollary}\label{noncompact harnack p<1 b>2}
Let $(M^n, g_{ij}(t))$, $t\in[0, T]$, be a complete solution to the Ricci flow \eqref{rf} with bounded curvature and nonnegative curvature operator. If $u$ is a bounded smooth positive solution to \eqref{eq2} with $p\in(1-\frac{2}{bn},1)$ and $b\geq2$, then for $v=\frac{p}{p-1}u^{p-1}$ , and any constants $\delta_1\in(0,1)$ and $\delta_2>0$, we have
\begin{align*}
\frac{|\nabla v|^2}{v}-b\frac{v_t}{v}-(b-1)\frac{R}{v}&\geq -\frac{b\alpha}{t}\left[1+\frac{1}{2}\sqrt{\frac{1}{1-\delta_1}+\delta_2}+C_1(\delta_1,\delta_2)R_{max}\right]
\end{align*}
on $M\times(0,T]$, where $\alpha$, $C_1(\delta_1,\delta_2)$ and $R_{max}$ are the same constants as in Proposition \ref{local harnack p<1 b>2}.
\end{corollary}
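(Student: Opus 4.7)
The plan is to derive Corollary \ref{noncompact harnack p<1 b>2} as an immediate consequence of the local estimate in Proposition \ref{local harnack p<1 b>2} by sending the cut-off radius $R_0 \to \infty$. This is entirely analogous to the way Corollary \ref{noncompact harnack p<1 b<2} was obtained from Proposition \ref{local harnack p<1 b<2} in the case $1 < b < 2$.

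First, inspect the right-hand side of the inequality in Proposition \ref{local harnack p<1 b>2}. All of the constants $\alpha$, $C_1(\delta_1, \delta_2)$ and $R_{max}$ are independent of $R_0$. The unique term that carries any dependence on $R_0$ is
\[
\frac{b\alpha}{t}\, C_2\,\frac{t\,\bar{v}_{max}}{R_0^2},
\]
which, as long as $\bar{v}_{max}$ remains uniformly bounded as $R_0 \to \infty$, tends to $0$ at the rate $1/R_0^2$. Hence the local estimate will converge, pointwise, to the claimed global estimate.

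Second, observe that the hypothesis that $u$ is a \emph{bounded} positive solution is exactly what guarantees $\bar{v}_{max}$ is uniformly controlled. Indeed, since $0 < p < 1$, the quantity $\bar{v} = -v = \frac{p}{1-p} u^{p-1}$ is a bounded function on $M \times [0, T]$ under the standing boundedness assumption (the same convention used for Corollary \ref{noncompact harnack p<1 b<2}), so in particular
\[
\sup_{\coprod_{t \in [0,T]} B_t(O, 2R_0) \times \{t\}} (-v) \;\leq\; \sup_{M \times [0,T]} \bar{v} \;<\; \infty
\]
uniformly in $R_0$. Fix a basepoint $O \in M$. Since $(M, g(t))$ is complete, the balls $B_t(O, R_0)$ exhaust $M$ for each fixed $t \in (0, T]$, so any prescribed point $(x, t) \in M \times (0, T]$ lies in $\coprod_{t \in (0,T]} B_t(O, R_0) \times \{t\}$ for all sufficiently large $R_0$.

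Third, apply Proposition \ref{local harnack p<1 b>2} at $(x, t)$ for each such $R_0$, with the parameters $\delta_1 \in (0, 1)$ and $\delta_2 > 0$ fixed throughout, and pass to the limit $R_0 \to \infty$. The $R_0$-dependent term vanishes and the remaining right-hand side is precisely the one stated in the corollary. Since $(x, t) \in M \times (0, T]$ was arbitrary, this establishes the global differential Harnack inequality. There is no serious obstacle here beyond what has already been set up; the only point that requires care is confirming that $\bar{v}_{max}$ can be bounded independently of $R_0$, which is exactly the role played by the boundedness hypothesis on $u$.
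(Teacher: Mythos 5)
Your overall strategy---sending $R_0 \to \infty$ in Proposition \ref{local harnack p<1 b>2} with $\delta_1,\delta_2$ fixed---is exactly the paper's own (one-line) justification, and the skeleton of the argument is sound: the only $R_0$-dependent term on the right side of the local estimate is $C_2\,t\,\bar v_{max}/R_0^2$, which vanishes in the limit provided $\bar v_{max}$ is uniformly controlled.

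However, the justification in your middle paragraph has the logic backwards. For $0<p<1$ we have $\bar v = -v = \frac{p}{1-p}\,u^{p-1} = \frac{p}{1-p}\,u^{-(1-p)}$, which \emph{blows up} as $u\to 0^+$. So the fact that $0<p<1$ and $u$ is bounded from above does \emph{not} imply that $\bar v$ is bounded---quite the opposite of what you assert with ``since $0<p<1$, the quantity $\bar v$ is a bounded function \ldots under the standing boundedness assumption.'' What is actually needed in the FDE regime is that $u$ is uniformly bounded \emph{away from zero}, so that $u^{p-1}$ stays bounded above; that, not the upper bound on $u$, is the content of the hypothesis that the convention attaches to ``bounded solution'' here (and is what makes $\bar v_{max}<\infty$, uniformly in $R_0$). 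Once this is stated correctly, the rest of your argument goes through unchanged and recovers the paper's reasoning.
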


Using Hamilton's method, again we can obtain the following refined estimate.

\begin{theorem}\label{refined noncompact harnack p<1 b>2}
Let $(M^n, g_{ij}(t))$, $t\in[0, T]$, be a complete solution to the Ricci flow \eqref{rf} with bounded curvature and nonnegative curvature operator. If $u$ is a bounded smooth positive solution to \eqref{eq2} with $p\in(1-\frac{2}{bn},1)$ and $b\geq2$, then for $v=\frac{p}{p-1}u^{p-1}$, we have
$$\frac{|\nabla v|^2}{v}-b\frac{v_t}{v}-(b-1)\frac{R}{v}+KR_{max}+\frac{b\alpha}{t}\geq 0$$
on $M\times(0,T]$, where $\alpha$ and $R_{max}$ are the same constants as in Proposition \ref{local harnack p<1 b>2}, and $$K=\max\{[\frac{2(b-2)\alpha}{n(1-p)}+b\alpha+1]\frac{(1-p)}{2},\,  b\alpha[\frac{2(b-2)}{bn(1-p)}+1](1-p)+(b-2)\sqrt{\frac{b\alpha(n-1)(1-p)}{2n}}\}.$$
\end{theorem}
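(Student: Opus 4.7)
The plan is to mirror the proof of Theorem \ref{refined noncompact harnack p<1 b<2}, adapting the sign-sensitive estimates to the present case $b \geq 2$. First, starting from \eqref{eq F}, set $H = t(F + K) + d$ with $d = b\alpha$, and compute $t\mathcal{L}(H)$. Combining the identity $y - z = \tfrac{F - (b-1)(-y)}{b}$ with the Cauchy--Schwarz inequality $|\nabla^2 v + \tfrac{b}{2} Rc|^2 \geq \tfrac{1}{n}(\Delta v + \tfrac{b}{2} R)^2$, the trace Harnack $Q \geq 0$ from Theorem \ref{matrix harnack}, and the pointwise inequality $-ty \geq t\tfrac{R}{-v}$, we bring $t\mathcal{L}(H)$ into the form $H - d + 2pt\nabla_i H \nabla_i v$ plus the coercive squares $\tfrac{1}{b\alpha}(H - tK - d)^2$ and a nonnegative multiple of $(-ty)^2$ (whose coefficient is positive by the hypothesis $p > 1 - \tfrac{2}{bn}$), together with error terms linear and quadratic in $R$.

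The essential new feature of the $b \geq 2$ case, compared with Theorem \ref{refined noncompact harnack p<1 b<2}, is that two cross terms come out with the \emph{wrong} sign: $-\tfrac{2(b-1)(b-2)}{bn}(-y) R$, arising from expanding the Cauchy--Schwarz square, and $-\tfrac{(b-1)(b-2)}{b}\tfrac{R}{v} y$ coming directly from \eqref{eq F} (which was a \emph{good} term when $b<2$). Following the strategy already used in Proposition \ref{local harnack p<1 b>2}, we split the $(-ty)^2$-coefficient as $(1-\delta_1) + \delta_1$ for some $\delta_1 \in (0,1)$ and apply AM--GM between the $\delta_1$-piece and a suitable multiple of $R^2$ to absorb both bad terms; this introduces an additional $R^2$ penalty of magnitude $\tfrac{\alpha(b-2)^2}{bn^2 \delta_1}$ that must later be compensated by the $K R_{\max}$ term.

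Next, introduce the barrier $\tilde H = H + \epsilon \psi$, where $\psi = e^{At} f$ with $f$ Hamilton's distance-like function from Lemma \ref{distance function} and $A > 0$ to be chosen large. By Corollary \ref{noncompact harnack p<1 b>2}, $\tilde H$ is positive at $t = 0$ and outside a compact subset of $M$ for each $t \in (0, T]$. Suppose, for contradiction, that $\tilde H$ first vanishes at an interior point $(x_0, t_0)$ with $t_0 > 0$; at that point $\nabla \tilde H = 0$, $\mathcal{L}(\tilde H) \leq 0$, and $H = -\epsilon \psi < 0$. Substituting and applying Cauchy--Schwarz to the drift term $-2pt_0 \epsilon e^{At_0}\nabla_i f \nabla_i v$ via $|\nabla f|, |\nabla^2 f| \leq C$, we are reduced to verifying the two pointwise algebraic inequalities
\begin{align*}
&-\Bigl[b\alpha + 1 + \tfrac{2(b-2)\alpha}{n(1-p)}\Bigr](1-p) R + 2K \geq 0,\\
&\tfrac{1}{b\alpha} K^2 - \Bigl[\tfrac{(b-2)^2(n-1)}{2n}(1-p) + \tfrac{\alpha(b-2)^2}{bn^2 \delta_1}\Bigr] R^2 - \Bigl[\tfrac{2(b-2)}{bn(1-p)} + 1\Bigr](1-p) RK \geq 0,
\end{align*}
valid for every $R \leq R_{\max}$. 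The two branches of the max defining $K$ correspond exactly to the thresholds yielded by these two inequalities. Once $K$ is so chosen, taking $A$ sufficiently large (of order $\bar v_{\max} + (1-p) R_{\max}$) forces $t_0 \mathcal{L}(\tilde H) > 0$, a contradiction. Letting $\epsilon \to 0$ then yields $H \geq 0$ on $M \times (0, T]$, which is the stated inequality.

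The main obstacle will be tuning $\delta_1 \in (0, 1)$ so that the discriminant of the quadratic-in-$K$ second inequality reproduces precisely the second branch $b\alpha\bigl[\tfrac{2(b-2)}{bn(1-p)} + 1\bigr](1-p) + (b-2)\sqrt{\tfrac{b\alpha(n-1)(1-p)}{2n}}$ of the max in the theorem's statement. Apart from this bookkeeping and the sign switch in the $\tfrac{R}{v} y$ term noted above, the argument runs parallel to the $1 < b < 2$ case.
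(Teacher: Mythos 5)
Your overall framework---passing from \eqref{eq F} to $H=t(F+K)+b\alpha$, the Cauchy--Schwarz completion of the square, the barrier $\tilde H=H+\epsilon\psi$ with Hamilton's $f$, and the first-zero contradiction argument---matches the paper's proof. But you handle the bad cross term $-\tfrac{2(b-1)(b-2)}{bn}\,tR\,(-ty)$ differently, and this is where the proposal falls short of the stated result.

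You propose to absorb both $(b-2)$-weighted cross terms by splitting the coefficient of $(-ty)^2$ as $(1-\delta_1)+\delta_1$ and applying AM--GM against $R^2$, exactly as in the \emph{local} Proposition \ref{local harnack p<1 b>2}. That is not what the paper does in the refined theorem. The paper keeps the cross term in the form $\bigl[\tfrac{4(b-1)}{b^2n(1-p)}K-\tfrac{2(b-1)(b-2)}{bn}R\bigr](-t_0^2y)$, where the first summand comes from expanding $-\tfrac{4(b-1)}{b^2n(1-p)}(-ty)(H-tK-d)$ at the touching point $H=-\epsilon\psi<0$, and imposes a \emph{third} constraint
\[
\tfrac{4(b-1)}{b^2n(1-p)}K-\tfrac{2(b-1)(b-2)}{bn}R\ge 0
\]
on $K$ alongside the two you list (and this constraint is then checked to be implied by the stated $K$). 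This lets the $\delta_1$-penalty $\tfrac{\alpha(b-2)^2}{bn^2\delta_1}R^2$ disappear entirely from the global argument, which is why the paper's final quadratic inequality in $K$ has no $\delta_1$ in it. By contrast, your modified quadratic constraint
\[
\tfrac{1}{b\alpha}K^2-\Bigl[\tfrac{(b-2)^2(n-1)}{2n}(1-p)+\tfrac{\alpha(b-2)^2}{bn^2\delta_1}\Bigr]R^2-\Bigl[\tfrac{2(b-2)}{bn(1-p)}+1\Bigr](1-p)RK\ge 0
\]
carries a strictly positive extra penalty for $b>2$ no matter how you choose $\delta_1\in(0,1)$, so the optimal $K$ it forces is strictly larger than the theorem's constant unless $b=2$. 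You flag the $\delta_1$-tuning as the ``main obstacle'' but leave it unresolved; in fact it cannot be resolved within that scheme---the obstacle is an artifact of reusing the local-estimate device where it is not needed. The missing idea is to let the linear-in-$(-ty)$ contribution generated by $K$ itself, rather than the quadratic $(-ty)^2$ coefficient, neutralize the bad cross term.
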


\begin{proof}
Let $H=t(F+K)+d$ with $d=b\alpha$ and $K>0$ to be determined. From \eqref{eq F}, one has
\begin{align*}
t\mathcal{L}(H)\geq &H-d+2p t\nabla_i H\nabla_iv-tR[\frac{b-1}{-v}+1-p]-\frac{b-1}{b}(ty)^2-\frac{(b-1)(b-2)}{b}\cdot(ty)^2\\
\quad\ &+\frac{2}{b^2n(1-p)}\left[H-tK-d-(b-1)(-ty)+\frac{b(b-2)}{2}(1-p)tR\right]^2\\
\quad\ &-\frac{(b-2)^2}{2}(1-p)t^2R^2-\frac{1}{b}(H-tK-d)^2+[(1-p)R+\frac{2(b-1)}{b}\frac{R}{-v}]t(H-tK-d) \\
\geq &H-d+2p t\nabla_i H\nabla_iv+(b-1)(-\frac{2d}{b}-1)\frac{tR}{-v}-[\frac{2(b-2)d}{bn(1-p)}+d+1](1-p)tR\\
\quad\ &+\frac{1}{b\alpha}(H-tK-d)^2+\frac{(b-1)^2}{b\alpha}(-ty)^2-\frac{(b-2)^2(n-1)}{2n}(1-p)t^2R^2\\
\quad\ &-\frac{4(b-1)}{b^2n(1-p)}(-ty)(H-tK-d)+\frac{2(b-1)}{b}\frac{tR}{-v}(H-tK)\\
\quad\ &+[\frac{2(b-2)}{bn(1-p)}+1](1-p)tR(H-tK)-\frac{2(b-1)(b-2)}{bn}tR(-ty).
\end{align*}

Let $\tilde{H}=H+\epsilon\psi$, where $\psi=e^{At}f$ for some constant $A>0$ to be determined, and $f$ being the function in Lemma \ref{distance function}. From Corollary \ref{noncompact harnack p<1 b>2}, we know that $\tilde{H}>0$ at $t=0$ and outside a fixed compact subset of $M$ for $t\in(0,T]$. Suppose that $\tilde{H}$ reaches $0$ for the first time at some point $x_0\in M$ when $t=t_0>0$. Then we have at $(x_0, t_0)$,
$$H=-\epsilon\psi<0,\qquad \nabla\tilde{H}=0,\qquad\textrm{and}\qquad \mathcal{L}(\tilde{H})\leq 0.$$
Hence, we get
\begin{align*}
0\geq& t_0\mathcal{L}(\tilde{H})\\\geq& -Ct_0\epsilon\psi|\nabla v|-[\frac{2(b-2)b\alpha}{bn(1-p)}+b\alpha+1](1-p)t_0R+\frac{1}{b\alpha}(\epsilon\psi)^2+\frac{1}{b\alpha}t_0^2K^2+\epsilon\psi\\
\quad\ &+\frac{2}{b\alpha}t_0K\epsilon\psi+2t_0K+\frac{(b-1)^2}{b\alpha}(\frac{t_0|\nabla v|^2}{-v})^2-\frac{(b-2)^2(n-1)}{2n}(1-p)t_0^2R^2\\
\quad\ &-[\frac{2(b-2)}{bn(1-p)}+1](1-p)t_0R(\epsilon\psi+t_0K) + t_0A\epsilon\psi -C\bar{v}_{max}t_0\epsilon\psi\\
\quad\ & +[\frac{4(b-1)}{b^2n(1-p)}K-\frac{2(b-1)(b-2)}{bn}R](-t_0^2y)\\
\geq& -\delta\epsilon\psi\frac{t_0|\nabla v|^2}{-v}-\frac{C}{4\delta}t_0(-v)\epsilon\psi+\frac{1}{b\alpha}(\epsilon\psi)^2+ \frac{(b-1)^2}{b\alpha}(\frac{t_0|\nabla v|^2}{-v})^2\\
\quad\ &-[\frac{2(b-2)\alpha}{n(1-p)}+b\alpha+1](1-p)t_0R+2t_0K + \frac{1}{b\alpha}t_0^2K^2-\frac{(b-2)^2(n-1)}{2n}(1-p)t_0^2R^2\\
\quad\ &-[\frac{2(b-2)}{bn(1-p)}+1](1-p)t_0^2RK + \left(A+\frac{2}{b\alpha}K-[\frac{2(2-b)}{bn(1-p)}+1](1-p)R-C\bar{v}_{max}\right)t_0\epsilon\psi\\
\quad\ &+[\frac{4(b-1)}{b^2n(1-p)}K-\frac{2(b-1)(b-2)}{bn}R](-t_0^2y).
\end{align*}

We may choose appropriate $\delta$ so that $$-\delta\epsilon\psi\frac{t_0|\nabla v|^2}{-v}+\frac{1}{b\alpha}(\epsilon\psi)^2+ \frac{(b-1)^2}{b\alpha}(\frac{t_0|\nabla v|^2}{-v})^2\geq0.$$
Notice that by choosing $$K=\max\{[\frac{2(b-2)\alpha}{n(1-p)}+b\alpha+1]\frac{(1-p)}{2}, b\alpha[\frac{2(b-2)}{bn(1-p)}+1](1-p)+(b-2)\sqrt{\frac{b\alpha(n-1)(1-p)}{2n}}\}R_{max},$$ we can make
$$\frac{4(b-1)}{b^2n(1-p)}K-\frac{2(b-1)(b-2)}{bn}R\geq0,$$
$$-[\frac{2(b-2)\alpha}{n(1-p)}+b\alpha+1](1-p)R+2K\geq0,$$
and
$$\frac{1}{b\alpha}K^2-\frac{(b-2)^2(n-1)}{2n}(1-p)R^2-[\frac{2(b-2)}{bn(1-p)}+1](1-p)RK\geq0.$$

Hence if we furthermore choose $A>(\frac{C}{4\delta}+C)\bar{v}_{max}+(1-p)R_{max}$, then we get
\begin{align*}
0\geq  t_0\mathcal{L}(\tilde{H})\geq \left[A-(1-p)R-(C+\frac{C}{4\delta})\bar{v}_{max}\right]t_0\epsilon\psi>0,
\end{align*}
a contradiction.
Therefore, $\tilde{H}>0$ for all $t\in[0,T]$. Letting $\epsilon\rightarrow 0$, it follows that $$H\geq0.$$
\end{proof}

Integrating the differential Harnack inequality in the above theorem along space-time paths, we obtain

\begin{corollary}\label{cor5}
Suppose that $(M, g_{ij}(t))$, $t\in[0, T]$, is a complete solution of the Ricci flow with bounded curvature and nonnegative curvature operator, and $u$ is a bounded smooth positive solution to \eqref{eq2} for $1-\frac{2}{bn}<p<1$ and $b\geq2$. Let $\bar{v}=-v=\frac{p}{1-p}u^{p-1}$, if $\bar{v}_{min}=\inf_{M\times[0,T]}\bar{v}>0$, then for any points $x_1$,$ x_2\in M$ and $0<t_1<t_2$, we have
\begin{equation}\label{eq cor5}
\bar{v}(x_2, t_2)\leq \bar{v}(x_1, t_1)\cdot (\frac{t_2}{t_1})^{\alpha}exp\left( \frac{\Gamma}{\bar{v}_{min}}+\frac{KR_{max}}{b}(t_2-t_1)\right),
\end{equation}
where $\alpha$, $K$ and $R_{max}$ are the same constants as in Theorem \ref{refined noncompact harnack p<1 b>2}, and $\Gamma=\inf_{\gamma}\int_{t_1}^{t_2}(\frac{b-1}{b}R+\frac{b}{4}\left|\frac{d\gamma}{d\tau}\right|^2_{g_{ij}(\tau)})d\tau$ with the infimum taking over all smooth curves $\gamma(\tau)$ in $M$, $\tau\in[t_1, t_2]$, with $\gamma(t_1)=x_1$ and $\gamma(t_2)=x_2$.

\end{corollary}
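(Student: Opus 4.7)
The plan is to derive the stated inequality by integrating the pointwise differential Harnack estimate of Theorem \ref{refined noncompact harnack p<1 b>2} along space-time curves connecting $(x_1,t_1)$ to $(x_2,t_2)$, and then optimizing over the choice of curve. Since $0<p<1$ gives $v=\frac{p}{p-1}u^{p-1}<0$, the substitution $\bar v=-v>0$ is natural, and all occurrences of $v$, $v_t$, $|\nabla v|^2$ in the Harnack inequality translate cleanly: $\frac{|\nabla v|^2}{v}=-\frac{|\nabla \bar v|^2}{\bar v}$, $\frac{v_t}{v}=\frac{\bar v_t}{\bar v}$, $\frac{R}{v}=-\frac{R}{\bar v}$.

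First I would rewrite the inequality from Theorem \ref{refined noncompact harnack p<1 b>2} in terms of $\bar v$ and then multiply through by $-\bar v/b<0$ (flipping the inequality) to obtain the upper bound
\[
\bar v_t \;\le\; -\frac{|\nabla \bar v|^2}{b} + \frac{b-1}{b}R + \frac{KR_{max}}{b}\,\bar v + \frac{\alpha}{t}\,\bar v.
\]
Next, for any smooth curve $\gamma(\tau)$ on $M$ with $\gamma(t_1)=x_1$, $\gamma(t_2)=x_2$, I would compute the total time derivative
\[
\frac{d}{d\tau}\bar v(\gamma(\tau),\tau) = \bar v_t + \langle \nabla \bar v,\dot\gamma\rangle,
\]
and estimate the cross term by the weighted Cauchy--Schwarz / AM--GM inequality
\[
\langle \nabla \bar v,\dot\gamma\rangle \;\le\; \frac{|\nabla \bar v|^2}{b} + \frac{b}{4}|\dot\gamma|_{g(\tau)}^{2},
\]
which is precisely the split chosen so that the $|\nabla \bar v|^2/b$ term cancels the negative gradient contribution above.

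This leaves
\[
\frac{d}{d\tau}\bar v(\gamma(\tau),\tau) \;\le\; \frac{b-1}{b}R + \frac{b}{4}|\dot\gamma|_{g(\tau)}^{2} + \frac{KR_{max}}{b}\,\bar v + \frac{\alpha}{t}\,\bar v.
\]
Dividing by $\bar v>0$ converts the left-hand side into $\frac{d}{d\tau}\log \bar v(\gamma(\tau),\tau)$, and using the hypothesis $\bar v\ge \bar v_{min}>0$ to bound $\bar v^{-1}\le \bar v_{min}^{-1}$ on the first two terms yields
\[
\frac{d}{d\tau}\log \bar v(\gamma(\tau),\tau) \;\le\; \frac{1}{\bar v_{min}}\!\left[\frac{b-1}{b}R+\frac{b}{4}|\dot\gamma|_{g(\tau)}^{2}\right] + \frac{KR_{max}}{b} + \frac{\alpha}{\tau}.
\]
Integrating from $t_1$ to $t_2$, taking the infimum over admissible curves $\gamma$ to produce $\Gamma$, and exponentiating then gives the stated inequality.

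The only subtle point is ensuring the sign of the gradient term works out so that the $|\nabla\bar v|^2/b$ introduced by AM--GM exactly cancels the $-|\nabla\bar v|^2/b$ coming from the Harnack bound; this is what forces the coefficient $b/4$ in front of $|\dot\gamma|^2$ in the definition of $\Gamma$. The positivity assumption $\bar v_{min}>0$ is essential here in order to move the $\bar v^{-1}$ factor outside the first two terms without losing control; without it one would only obtain the weaker additive form given in Corollary \ref{cor6}. All other estimates are routine manipulations with the chain rule and the elementary inequality $ab\le \frac{a^2}{b}+\frac{b\,\,\cdot}{4}\cdot\frac{}{}$ applied in its weighted form.
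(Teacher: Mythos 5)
Your proposal is correct and is exactly the argument the paper intends (the paper states only ``Integrating the differential Harnack inequality in the above theorem along space-time paths, we obtain'' and omits the details): rewrite Theorem~\ref{refined noncompact harnack p<1 b>2} in terms of $\bar v=-v>0$, differentiate $\log\bar v$ along a space-time curve, absorb the cross term $\langle\nabla\bar v,\dot\gamma\rangle$ via the weighted inequality $xy\le\frac{x^2}{b}+\frac{b}{4}y^2$ so that the $\frac{1}{b}|\nabla\bar v|^2$ terms cancel, bound $\bar v^{-1}\le\bar v_{\min}^{-1}$ on the remaining nonnegative $R$ and $|\dot\gamma|^2$ terms (using $R\ge 0$ from nonnegative curvature operator), integrate over $[t_1,t_2]$, take the infimum over curves, and exponentiate. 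The only cosmetic issue is the garbled final display of the AM--GM inequality, which should read $xy\le\frac{x^2}{b}+\frac{b}{4}y^2$; the mathematics is sound.
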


\begin{corollary}\label{cor6}
Suppose that $(M, g_{ij}(t))$, $t\in[0, T]$, is a complete solution of the Ricci flow \eqref{rf} with bounded curvature and nonnegative curvature operator, and $u$ is a bounded smooth positive solution to \eqref{eq2} for $1-\frac{2}{bn}<p<1$ and $b\geq2$. Let $\bar{v}=-v=\frac{p}{1-p}u^{p-1}$, then for any points $x_1$,$ x_2\in M$ and $0<t_1<t_2$, we have
\begin{equation}\label{eq cor6}
\bar{v}(x_2, t_2)-\bar{v}(x_1, t_1)\leq \alpha \bar{v}_{max}\ln \frac{t_2}{t_1} + [\frac{b-1}{b}+\frac{K}{b}\bar{v}_{max}]R_{max}(t_2-t_1)+\frac{b}{4}\frac{d_{t_1}^2(x_1,x_2)}{t_2-t_1}
\end{equation}
where $\alpha$, $K$ and $R_{max}$ are the same constants as in Theorem \ref{refined noncompact harnack p<1 b>2}, and $\bar{v}_{max}=\sup_{M\times[0,T]}\bar{v}$.
\end{corollary}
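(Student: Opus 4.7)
My plan is to integrate the pointwise differential Harnack inequality of Theorem \ref{refined noncompact harnack p<1 b>2} along a suitable space-time curve, in complete analogy with the $1<b<2$ case (Corollary \ref{cor6} of Section 2.1).

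First I would rewrite Theorem \ref{refined noncompact harnack p<1 b>2} in terms of $\bar v = -v > 0$. Multiplying through by $v<0$ (which flips the inequality) and dividing by $b$ yields
\begin{equation*}
\bar v_t \;\leq\; -\frac{|\nabla \bar v|^2}{b} \;+\; \frac{b-1}{b}\,R \;+\; \frac{K\,\bar v\,R_{\max}}{b} \;+\; \frac{\alpha\,\bar v}{t}.
\end{equation*}
This is the pointwise differential inequality that will be integrated.

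Next I would pick an arbitrary smooth curve $\gamma:[t_1,t_2]\to M$ with $\gamma(t_1)=x_1$, $\gamma(t_2)=x_2$ and compute
\begin{equation*}
\frac{d}{d\tau}\bar v(\gamma(\tau),\tau) \;=\; \bar v_t + \langle \nabla\bar v,\dot\gamma\rangle_{g(\tau)}.
\end{equation*}
Applying Cauchy--Schwarz with the weight $b$ gives $\langle\nabla\bar v,\dot\gamma\rangle \leq \frac{|\nabla\bar v|^2}{b} + \frac{b}{4}|\dot\gamma|^2_{g(\tau)}$, which exactly cancels the bad $-|\nabla\bar v|^2/b$ term produced above. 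After estimating $\bar v\leq\bar v_{\max}$ and $R\leq R_{\max}$, one obtains
\begin{equation*}
\frac{d}{d\tau}\bar v(\gamma(\tau),\tau) \;\leq\; \left[\tfrac{b-1}{b}+\tfrac{K\bar v_{\max}}{b}\right] R_{\max} \;+\; \frac{\alpha\,\bar v_{\max}}{\tau} \;+\; \frac{b}{4}|\dot\gamma|^2_{g(\tau)}.
\end{equation*}
Integrating from $t_1$ to $t_2$ produces all three terms on the right-hand side of \eqref{eq cor6} except that the path integral is measured with the evolving metric $g(\tau)$.

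Finally, to obtain the $d_{t_1}^2$ factor as in the statement, I would specialize $\gamma$ to be a constant-speed parametrization on $[t_1,t_2]$ of a minimizing $g(t_1)$-geodesic from $x_1$ to $x_2$. Since the Ricci flow with nonnegative Ricci curvature is metric-non-increasing, $|\dot\gamma|^2_{g(\tau)}\leq |\dot\gamma|^2_{g(t_1)}$ for all $\tau\in[t_1,t_2]$, and therefore
\begin{equation*}
\int_{t_1}^{t_2}|\dot\gamma|^2_{g(\tau)}\,d\tau \;\leq\; \int_{t_1}^{t_2}|\dot\gamma|^2_{g(t_1)}\,d\tau \;=\; \frac{d_{t_1}^2(x_1,x_2)}{t_2-t_1}.
\end{equation*}
This yields \eqref{eq cor6}. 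The only subtle point is keeping track of the time-dependence of the metric when passing from an arbitrary curve to the distance term; this is handled cleanly by monotonicity of $g(t)$ under $Rc\geq 0$. Everything else reduces to the algebraic rearrangement of Theorem \ref{refined noncompact harnack p<1 b>2} and a single Cauchy--Schwarz with the right weight $b$.
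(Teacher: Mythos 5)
Your proof is correct and follows the standard path-integration argument that the paper alludes to (the paper just says ``integrating the differential Harnack inequality... along space-time paths'' without writing out the details). The key steps are all present and right: the algebraic rearrangement of Theorem \ref{refined noncompact harnack p<1 b>2} into $\bar v_t \leq -\frac{|\nabla\bar v|^2}{b}+\frac{b-1}{b}R+\frac{K\bar v R_{\max}}{b}+\frac{\alpha\bar v}{t}$, the weighted Cauchy--Schwarz $\langle\nabla\bar v,\dot\gamma\rangle\leq\frac{|\nabla\bar v|^2}{b}+\frac{b}{4}|\dot\gamma|^2$ that cancels the gradient term, and the use of $\frac{\partial g}{\partial t}=-2Rc\leq 0$ (from nonnegative curvature operator) to replace $|\dot\gamma|^2_{g(\tau)}$ by $|\dot\gamma|^2_{g(t_1)}$ so that a constant-speed $g(t_1)$-geodesic gives $\int_{t_1}^{t_2}|\dot\gamma|^2_{g(\tau)}\,d\tau\leq d_{t_1}^2(x_1,x_2)/(t_2-t_1)$. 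This is exactly the intended argument.
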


\bigskip

\section{Li-Yau-Hamilton Estimate for the conjugate Heat equation}
In this section, we consider the conjugate heat equation
\begin{equation}\label{conjugate heat equation}
\frac{\partial u}{\partial t}=-\Delta u+Ru
\end{equation}
under the Ricci flow \eqref{rf} and derive Li-Yau type estimates for positive solutions to \eqref{conjugate heat equation}. When the underlying manifold is compact, this was first studied indepedently  by X. Cao \cite{xCao2008} and Kuang-Zhang \cite{KuZh2008}.

Suppose that $u$ is a positive solution of \eqref{conjugate heat equation}. Let $u=(4\pi\tau)^{-\frac{n}{2}}e^{-v}$, $\tau=T-t$, Then we have
$$v=-\ln u-\frac{n}{2}\ln(4\pi\tau),$$
and the following evolution equations:
$$\frac{\partial g_{ij}}{\partial \tau}=2R_{ij},$$
$$\frac{\partial u}{\partial \tau}=\Delta u-Ru,$$
and
$$\frac{\partial v}{\partial \tau}=\Delta v-|\nabla v|^2+R-\frac{n}{2\tau}.$$

Define
\begin{align*}
F&=|\nabla v|^2-bv_{\tau}+cR+d\frac{v}{\tau}\\
&=|\nabla v|^2-b\Delta v+b|\nabla v|^2-bR+\frac{bn}{2\tau}+cR+d\frac{v}{\tau}\\
&=(b+1)|\nabla v|^2-b\Delta v+(c-b)R+d\frac{v}{\tau}+\frac{bn}{2\tau}.
\end{align*}

\begin{lemma}
For the operator $\mathcal{L}=\frac{\partial}{\partial \tau}-\Delta$, one has
\begin{align}\label{evolution conjugate F}
\mathcal{L}F&=-2\nabla_i F\nabla_i v-(2c-b)\Delta R+2(c+1)\nabla_i R\nabla_i v+\frac{d}{\tau}|\nabla f|^2-2(b+2)R_{ij}\nabla_iv\nabla_j v\nonumber\\
&\quad -2|\nabla^2 v|^2+2bR_{ij}\nabla_i\nabla_j v-2(c-b)|Rc|^2+\frac{d}{\tau}R-\frac{d}{\tau^2}v-\frac{(b+d)n}{2\tau^2}.
\end{align}
\end{lemma}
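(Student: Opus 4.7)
The plan is to compute $\mathcal{L}F$ piece by piece after rewriting $F$ in the displayed form
\[
F=(b+1)|\nabla v|^{2}-b\,\Delta v+(c-b)R+\tfrac{d\,v}{\tau}+\tfrac{bn}{2\tau},
\]
which is obtained by using the PDE $v_{\tau}=\Delta v-|\nabla v|^{2}+R-\tfrac{n}{2\tau}$ to eliminate the $v_\tau$ in the definition of $F$. Working with this form is convenient because it lets us apply $\mathcal{L}=\partial_\tau-\Delta$ only to the quantities $|\nabla v|^{2}$, $\Delta v$, $R$, $v/\tau$, and $1/\tau$, all of whose $\mathcal{L}$-evolutions are standard under the backward Ricci flow $\partial_\tau g_{ij}=2R_{ij}$.

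First I would assemble the building blocks. The Bochner formula together with $\partial_\tau g^{ij}=-2R^{ij}$ and $\nabla v_\tau-\nabla\Delta v=-\nabla|\nabla v|^{2}+\nabla R$ gives
\[
\mathcal{L}|\nabla v|^{2}=-2|\nabla^{2}v|^{2}-4R_{ij}\nabla_i v\nabla_j v-4\nabla_iv\nabla_jv\nabla_i\nabla_j v+2\nabla_iR\nabla_iv.
\]
The commutator $[\partial_\tau,\Delta]=-2R_{ij}\nabla_i\nabla_j$ under backward Ricci flow yields
\[
\mathcal{L}(\Delta v)=\Delta(v_\tau-\Delta v)-2R_{ij}\nabla_i\nabla_j v=-\Delta|\nabla v|^{2}+\Delta R-2R_{ij}\nabla_i\nabla_j v.
\]
For the scalar curvature, $\partial_\tau R=-\Delta R-2|Rc|^{2}$, so $\mathcal{L}R=-2\Delta R-2|Rc|^{2}$. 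Finally, direct computation gives
\[
\mathcal{L}\!\Bigl(\tfrac{v}{\tau}\Bigr)=\tfrac{1}{\tau}\bigl(-|\nabla v|^{2}+R-\tfrac{n}{2\tau}\bigr)-\tfrac{v}{\tau^{2}},\qquad \mathcal{L}\!\Bigl(\tfrac{1}{\tau}\Bigr)=-\tfrac{1}{\tau^{2}}.
\]

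Next I would add the five contributions with weights $b+1,-b,c-b,d,\tfrac{bn}{2}$. The $|\nabla^{2}v|^{2}$ terms combine as $-2(b+1)+2b=-2$ (using $b\Delta|\nabla v|^{2}=2b|\nabla^{2}v|^{2}+2b\nabla_iv\nabla_i\Delta v+2bR_{ij}\nabla_iv\nabla_jv$ via Bochner again); the $R_{ij}\nabla_iv\nabla_jv$ coefficient becomes $-4(b+1)+2b=-2(b+2)$; the $\Delta R$ coefficient becomes $-b-2(c-b)=-(2c-b)$; the $R_{ij}\nabla_i\nabla_jv$ and $|Rc|^{2}$ terms appear with the stated coefficients $2b$ and $-2(c-b)$; and the $1/\tau^{2}$ terms collect as $-\tfrac{(b+d)n}{2\tau^{2}}-\tfrac{d\,v}{\tau^{2}}$. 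At this stage the only remaining terms are the drift-type pieces $-4(b+1)\nabla_iv\nabla_jv\nabla_i\nabla_jv=-2(b+1)\nabla_iv\nabla_i|\nabla v|^{2}$, the $2b\nabla_iv\nabla_i\Delta v$ coming from $b\Delta|\nabla v|^{2}$, the $\nabla_i R\nabla_i v$ pieces, and $-\tfrac{d}{\tau}|\nabla v|^{2}$.

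The key algebraic step, which is also the only mildly nontrivial part, is to recognize that precisely these leftover first-order pieces repackage as $-2\nabla_iF\nabla_iv$ plus the advertised remainder. Indeed
\[
-2\nabla_iF\nabla_iv=-2(b+1)\nabla_iv\nabla_i|\nabla v|^{2}+2b\nabla_iv\nabla_i\Delta v-2(c-b)\nabla_iR\nabla_iv-\tfrac{2d}{\tau}|\nabla v|^{2},
\]
so substituting this identity into the sum converts the $\nabla_i v\nabla_i|\nabla v|^{2}$ and $\nabla_iv\nabla_i\Delta v$ contributions into $-2\nabla_iF\nabla_iv$, produces an extra $+2(c-b)\nabla_iR\nabla_iv$ that combines with the already-present $2(b+1)\nabla_iR\nabla_iv$ to give the coefficient $2(c+1)$, and flips the sign of $\tfrac{d}{\tau}|\nabla v|^{2}$ to match the stated formula. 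I expect this bookkeeping — specifically, tracking the sign conventions dictated by the backward time variable $\tau$ and checking that the $\nabla_iR\nabla_iv$ and $|\nabla v|^{2}/\tau$ coefficients land correctly after absorbing the drift term — to be the only place where care is needed; every other step is a direct application of the standard evolution identities.
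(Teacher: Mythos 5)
Your computation is correct and follows essentially the same route as the paper: decompose $F$ into $|\nabla v|^2$, $\Delta v$, $R$, $v/\tau$, $1/\tau$, evolve each piece under $\mathcal{L}=\partial_\tau-\Delta$ using the backward-Ricci-flow commutator and Bochner's formula, and then repackage the first-order leftovers as $-2\nabla_iF\nabla_iv$. (One small observation: your bookkeeping yields $\tfrac{d}{\tau}|\nabla v|^2$ for the mid-formula term, which confirms that the $|\nabla f|^2$ appearing in the paper's displayed formula is a typo for $|\nabla v|^2$.)
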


\begin{proof} It is straightforward to check that
\begin{equation}
\begin{aligned}
\frac{\partial |\nabla v|^2}{\partial \tau}&=-2R_{ij}\nabla_iv\nabla_jv+2\nabla_iv_{\tau}\nabla_i v\\
&=-2R_{ij}\nabla_iv\nabla_jv+2\nabla_i(\Delta v-|\nabla v|^2+R-\frac{n}{2\tau})\nabla_iv\\
%&=-2R_{ij}\nabla_iv\nabla_jv+2\nabla_i\Delta v\nabla_i v -2\nabla_i|\nabla v|^2\nabla_iv+2\nabla_i R\nabla_iv\\
&=\Delta|\nabla v|^2-4R_{ij}\nabla_iv\nabla_jv-2|\nabla^2 v|^2-2\nabla_i|\nabla v|^2\nabla_iv+2\nabla_i R\nabla_iv,
\end{aligned}
\end{equation}
\begin{equation}
\begin{aligned}
\frac{\partial \Delta v}{\partial \tau}&=-2R_{ij}\nabla_i\nabla_j v+\Delta v_{\tau}\\
&=\Delta^2v-2R_{ij}\nabla_i\nabla_j v-2\nabla_i\Delta v\nabla_i v-2|\nabla^2 v|^2-2R_{ij}\nabla_iv\nabla_j v +\Delta R,
\end{aligned}
\end{equation}
\begin{equation}\frac{\partial R}{\partial \tau}=-\Delta R-2|Rc|^2,
\end{equation}
and
\begin{equation}\frac{\partial}{\partial \tau}(\frac{v}{\tau})=\Delta (\frac{v}{\tau})-\frac{1}{\tau}|\nabla v|^2+\frac{R}{\tau}-\frac{n}{2\tau^2}-\frac{v}{\tau^2}.\end{equation}
From the formulas above, the lemma follows immediately.
\end{proof}

By setting $b=-2$, $c=-1$ and $d=0$, it follows from \eqref{evolution conjugate F} that
$$\mathcal{L}F=-2\nabla_iF\nabla_i v - 2|R_{ij}+\nabla_i\nabla_j v|^2+\frac{n}{\tau^2}.$$
Let $H=-|\nabla v|^2+2\Delta v+R$, then
\begin{equation}\label{evolution conjugate H}
\mathcal{L}H=-2\nabla_iH\nabla_i v - 2|R_{ij}+\nabla_i\nabla_j v|^2.
\end{equation}

Moreover, we may write $H=2z-y,$ where $y=|\nabla v|^2+R$ and $z=\Delta v+R$.

\begin{proposition}
Let $M^n$ be a complete manifold, and $g(t)$, $t\in[0, T]$, be a complete solution to the Ricci flow on $M$ with bounded Ricci curvature and nonnegative scalar curvature. If $u$ is a positive solution to \eqref{conjugate heat equation}, then for $v=-\ln u-\frac{n}{2}\ln(4\pi\tau)$, $\tau=T-t$, any point $O\in M$ and any constant $R_0>0$, we have
$$-|\nabla v|^2+2\Delta v+R\leq 2n[\frac{1}{\tau}+16 R_{max}+\frac{(72n+128)}{R_0^2}+\frac{8\sqrt{(n-1)R_{max}}}{R_0}]$$
on $\coprod_{\tau\in(0, T]} B_{\tau}(O, R_0)\times\{\tau\}$, where $R_{max}=\max_{[0,T]}|Rc|$.
\end{proposition}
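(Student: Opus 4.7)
The plan is to parallel the local Harnack arguments of Section 2: apply the parabolic maximum principle to the localized quantity $G(x,\tau):=\tau\,\phi(x,\tau)\,H(x,\tau)$ on the cylinder $\coprod_{\tau\in[0,T]}B_\tau(O,2R_0)\times\{\tau\}$, where $\phi(x,\tau)=\eta(r(x,\tau)/(2R_0))$ is the standard cutoff from \eqref{eq phi}. Since $G\equiv 0$ on the parabolic boundary, either $G\leq 0$ throughout (whence $H\leq 0$ on the inner ball and the conclusion is trivial), or $G$ attains a positive maximum at an interior point $(x_0,\tau_0)$ with $\tau_0>0$, where $\nabla G=0$ (so that $\phi\nabla H=-H\nabla\phi$) and $\mathcal{L}(G)\geq 0$. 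Expanding $\mathcal{L}(\tau\phi H)$ via the product rule for $\mathcal{L}=\partial_\tau-\Delta$, substituting \eqref{evolution conjugate H}, eliminating $\nabla H$ by the critical-point relation, and multiplying through by $\phi$, one obtains (with $G_0:=G(x_0,\tau_0)$)
\begin{align*}
2\tau_0\phi^2\,|R_{ij}+\nabla_i\nabla_j v|^2 \;\leq\; \tfrac{\phi G_0}{\tau_0}+\phi_\tau\,G_0-G_0\Delta\phi+\tfrac{2G_0|\nabla\phi|^2}{\phi}+2G_0\,\nabla\phi\cdot\nabla v.
\end{align*}

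The crucial algebraic input is the trace Cauchy--Schwarz bound $|R_{ij}+\nabla_i\nabla_j v|^2\geq \tfrac{1}{n}(R+\Delta v)^2=\tfrac{1}{4n}(H+y)^2$, where $y=|\nabla v|^2+R$. Because $R\geq 0$ we have $y\geq 0$, so the identity $(H+y)^2=H^2+2Hy+y^2$ splits the left-hand side into three good terms $\tfrac{G_0^2}{2n\tau_0}+\tfrac{G_0\phi y}{n}+\tfrac{\tau_0\phi^2 y^2}{2n}$. For the cross term on the right, $|\nabla\phi|\leq 8\phi^{1/2}/R_0$ together with $|\nabla v|^2\leq y$ (again using $R\geq 0$) gives $|2G_0\nabla\phi\cdot\nabla v|\leq 16G_0\sqrt{\phi y}/R_0$, and the elementary bound $\sqrt{\phi y}\leq \phi y/(2a)+a/2$ with the optimal choice $a=8n/R_0$ yields
\begin{align*}
|2G_0\nabla\phi\cdot\nabla v|\;\leq\;\tfrac{G_0\phi y}{n}+\tfrac{64nG_0}{R_0^2},
\end{align*}
whose first summand is absorbed by the mixed good term $G_0\phi y/n$.

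What remains is an inequality in $G_0$ whose perturbative coefficients are controlled by the standard cutoff estimates $\phi\leq 1$, $|\Delta\phi|\leq 8n/R_0^2$, $|\nabla\phi|^2/\phi\leq 64/R_0^2$, together with Hamilton's evolution estimate for the distance function under Ricci flow with bounded Ricci, which supplies $|\phi_\tau|\leq 16R_{\max}+8\sqrt{(n-1)R_{\max}}/R_0$. Dividing by $G_0$, multiplying by $2n\tau_0$, and applying the argument on $[0,\tau^*]$ with $\tau^*\in(0,T]$ arbitrary (so that the maximum time $\tau_0\leq\tau^*$), one arrives at $\tau^*H(x,\tau^*)\leq G_0\leq 2n+2n\tau^*\bigl[16R_{\max}+8\sqrt{(n-1)R_{\max}}/R_0+(72n+128)/R_0^2\bigr]$ on $B_{\tau^*}(O,R_0)$, which is the stated bound. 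The central subtlety is the cross term $\nabla\phi\cdot\nabla v$: since $H$ itself offers no a priori control on $|\nabla v|$, a direct Cauchy--Schwarz against $G_0^2/\tau_0$ would fail, and it is precisely the hypothesis $R\geq 0$ that produces the extra good terms $G_0\phi y/n$ and $\tau_0\phi^2y^2/(2n)$ used to absorb $|\nabla v|$.
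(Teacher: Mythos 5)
Your proof is correct and follows essentially the same strategy as the paper: apply the maximum principle to $\tau\phi H$ with Hamilton's cutoff, use the trace Cauchy--Schwarz lower bound $|Rc+\nabla^2 v|^2\geq\frac{1}{4n}(H+y)^2$, and absorb the cross term $\nabla\phi\cdot\nabla v$ into the mixed good term $G_0\phi y/n$, all hinging on $R\geq 0$ to guarantee $y\geq 0$. One bookkeeping slip worth noting: the term $8\sqrt{(n-1)R_{\max}}/R_0$ in the final constant actually arises from the Laplacian comparison estimate $\Delta\phi\geq -\tfrac{8n}{R_0^2}-\tfrac{8\sqrt{(n-1)R_{\max}}}{R_0}$ (this is where the Ricci \emph{lower} bound enters, since the hypotheses do not give $Rc\geq 0$ and so your stated $|\Delta\phi|\leq 8n/R_0^2$ is unjustified), not from the distance-distortion bound on $\phi_\tau$, which gives simply $\phi_\tau\leq 16R_{\max}$; since the two appear additively with the same coefficient $G_0$, the end result is nevertheless identical to the paper's.
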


\begin{proof}
Let
$\displaystyle \phi(x,\tau)=\eta\left(\frac{r(x,\tau)}{2R_0}\right)$
on $\coprod_{\tau\in[0, T]} B_{\tau}(O, 2R_0)\times\{\tau\}$, where $\eta(s)$ is the same function as in the proof of Proposition \ref{local harnack p<1 1<b<2}. From \eqref{evolution conjugate H}, we have
\begin{align*}
\tau\phi\mathcal{L}(\tau\phi H)%&=\tau\phi^2H+\tau^2\phi H\phi_\tau-\tau^2\phi H\Delta\phi - 2\tau^2\phi\nabla_i\phi\nabla_i H + \tau^2\phi^2\mathcal{L}(H)\\
&=\tau\phi^2H+\tau^2\phi H\phi_\tau-\tau^2\phi H\Delta\phi - 2\tau^2\phi\nabla_i\phi\nabla_i H -2\tau^2\phi^2\nabla_iH\nabla_i v \\
&\quad - 2\tau^2\phi^2|R_{ij}+\nabla_i\nabla_j v|^2.
\end{align*}

Let $\tilde{H}=\tau\phi H=2\tilde{z}-\tilde{y}$, $\tilde{y}=\tau\phi y$, and $\tilde{z}=\tau\phi z$. Since
$$|\nabla \phi|\leq \frac{8}{R_0}\phi^{\frac{1}{2}}, \quad \Delta\phi \geq-\frac{8n}{R^2_0}-\frac{8\sqrt{(n-1)R_{max}}}{R_0},\quad \frac{\partial \phi}{\partial \tau}\leq 16R_{max},
$$
and
$$-2\tau_0^2\phi^2\nabla_iH\nabla_i v=2\tau^2_0\phi H\nabla_i\phi\nabla_i v\leq\frac{16}{R_0}\tau_0^{\frac{1}{2}}\tilde{y}^{\frac{1}{2}}(2\tilde{z}-\tilde{y}),$$
if $\tilde{H}$ reaches a positive maximum at $(x_0,\tau_0)$, we have
\begin{align*}
0&\leq \tau_0\phi\mathcal{L}(\tau\phi H)\\
&\leq [1+16\tau_0 R_{max}+\frac{(8n+128)\tau_0}{R_0^2}+\frac{8\tau_0\sqrt{(n-1)R_{max}}}{R_0}](2\tilde{z}-\tilde{y})\\
&\quad +\frac{16}{R_0}\tau_0^{\frac{1}{2}}\tilde{y}^{\frac{1}{2}}(2\tilde{z}-\tilde{y})-\frac{2}{n}|\tilde{z}|^2\\
&= [1+16\tau_0 R_{max}+\frac{(8n+128)\tau_0}{R_0^2}+\frac{8\tau_0\sqrt{(n-1)R_{max}}}{R_0}](2\tilde{z}-\tilde{y})\\
&\quad +\frac{16}{R_0}\tau_0^{\frac{1}{2}}\tilde{y}^{\frac{1}{2}}(2\tilde{z}-\tilde{y})-\frac{1}{2n}(2\tilde{z}-\tilde{y}+\tilde{y})^2\\
%&=[1+16\tau_0 R_{max}+\frac{(8n+128)\tau_0}{R_0^2}+\frac{8\tau_0\sqrt{(n-1)R_{max}}}{R_0}](2\tilde{z}-\tilde{y})\\
%&\quad +\frac{16}{R_0}\tau_0^{\frac{1}{2}}\tilde{y}^{\frac{1}{2}}(2\tilde{z}-\tilde{y}) -\frac{1}{2n}(2\tilde{z}-\tilde{y})^2-\frac{1}{n}\tilde{y}(2\tilde{z}-\tilde{y})+\frac{R}{n}(2\tilde{z}-\tilde{y})-\frac{1}{2n}\tilde{y}^2\\
&\leq -\frac{1}{2n}(2\tilde{z}-\tilde{y})^2+[1+16\tau_0 R_{max}+\frac{(72n+128)\tau_0}{R_0^2}+\frac{8\tau_0\sqrt{(n-1)R_{max}}}{R_0}](2\tilde{z}-\tilde{y}).
\end{align*}

Therefore, at $(x_0,\tau_0)$, we have
$$\tilde{H}\leq 2n[1+16\tau_0 R_{max}+\frac{(72n+128)\tau_0}{R_0^2}+\frac{8\tau_0\sqrt{(n-1)R_{max}}}{R_0}].$$

Hence for any $(x,\tau)$, one has
$$\tau H\leq 2n[1+16\tau R_{max}+\frac{(72n+128)\tau}{R_0^2}+\frac{8\tau\sqrt{(n-1)R_{max}}}{R_0}].$$

\end{proof}

Letting $R_0\rightarrow \infty$ in the proposition above, we have

\begin{corollary}
Let $M^n$ be a complete manifold, and $g(t)$, $t\in[0, T]$, be a complete solution to the Ricci flow on $M$ with bounded Ricci curvature and nonnegative scalar curvature. If $u$ is a positive solution to \eqref{conjugate heat equation}, then for $v=-\ln u-\frac{n}{2}\ln(4\pi\tau)$, $\tau=T-t$, we have
$$-|\nabla v|^2+2\Delta v+R\leq 2n[\frac{1}{\tau}+16 R_{max}],$$
on $M\times[0,T)$, where $R_{max}=\max_{M\times[0,T]}|Rc|$.
\end{corollary}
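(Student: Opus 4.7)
The plan is to obtain this as a direct pointwise consequence of the preceding Proposition by sending the radius $R_0$ to infinity. Fix an arbitrary point $(x_0,\tau_0)\in M\times(0,T]$. Since $g(t)$ is a complete metric with bounded Ricci curvature on $[0,T]$, the distance function $d_{\tau_0}(O,x_0)$ to the fixed reference point $O$ is a finite number. So for every $R_0 > d_{\tau_0}(O,x_0)$ we have $x_0\in B_{\tau_0}(O,R_0)$, and the preceding Proposition applies at $(x_0,\tau_0)$ to give
\[
-|\nabla v|^2+2\Delta v+R\Big|_{(x_0,\tau_0)}\;\le\; 2n\left[\frac{1}{\tau_0}+16R_{max}+\frac{72n+128}{R_0^{2}}+\frac{8\sqrt{(n-1)R_{max}}}{R_0}\right].
\]

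Now I let $R_0\to\infty$. The left-hand side is independent of $R_0$, while the two last terms on the right vanish. Note the relevant quantities on the right hand side ($R_{max}$ and the overall prefactor $2n$) are determined by the geometry on $M\times[0,T]$ and do not depend on $R_0$; in particular $R_{max}=\max_{M\times[0,T]}|Rc|$ is finite by the bounded curvature hypothesis. Passing to the limit yields
\[
-|\nabla v|^2+2\Delta v+R\Big|_{(x_0,\tau_0)}\;\le\; 2n\left[\frac{1}{\tau_0}+16R_{max}\right],
\]
and since $(x_0,\tau_0)$ was arbitrary this is precisely the claimed estimate on $M\times[0,T)$.

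The only subtle point to verify is that the Proposition really applies to any $(x_0,\tau_0)$ for all sufficiently large $R_0$ — i.e., that the cutoff-based localization does not degenerate. This is guaranteed by the completeness of $g(\tau)$ for each $\tau$ and the uniform boundedness of Ricci curvature, which make $d_\tau(O,x_0)$ a well-defined finite quantity for each fixed $\tau$. No new estimate or further maximum-principle argument is needed; the corollary is a clean pointwise limit of the already-established localized bound. Accordingly I expect no real obstacle — the nontrivial work was done in proving the Proposition itself, where the cutoff function $\phi$ had to absorb the extra gradient and Laplacian error terms arising from the evolving metric.
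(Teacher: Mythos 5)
Your argument is exactly the paper's own: the corollary is obtained by letting $R_0\to\infty$ in the preceding localized proposition, and your elaboration (fixing $(x_0,\tau_0)$, taking $R_0$ large enough that $x_0\in B_{\tau_0}(O,R_0)$, and noting the $1/R_0$ and $1/R_0^2$ terms vanish) correctly fills in the details the paper leaves implicit. The proof is correct and coincides with the intended one.
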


%Again, the above differential Harnack inequality can be refined as follows:
%\begin{theorem}\label{harnack conjugate heat equation}
%Let $g(t)$, $t\in[0, T]$, be a complete solution to the Ricci flow on $M^n$ with bounded curvature and nonnegative scalar curvature. If $u$ is a positive solution to \eqref{conjugate heat equation}, then for $v=-\ln u-\frac{n}{2}\ln(4\pi\tau)$, $\tau=T-t$, we have
%$$-|\nabla v|^2+2\Delta v+R\leq \frac{2n}{\tau},$$
%on $M\times[0,T)$.

%\end{theorem}
Now we are ready to proveTheorem \ref{theorem conjugate heat equation}.
\begin{proof}[Proof of Theorem \ref{theorem conjugate heat equation}:]
Let $\hat{H}=\tau H-k$, then \eqref{evolution conjugate H} implies that
$$\tau \mathcal{L}\hat{H}=\hat{H}+k - 2\tau\nabla_i\hat{H}\nabla_i v - 2\tau^2|R_{ij}+\nabla_i\nabla_j v|^2.$$

Let $\check{H}=\hat{H}-\epsilon\psi$, where $\psi=e^{A\tau}f$ for some constant $A$ to be determined and $f$ is the same function as in Lemma \ref{distance function}, then
\begin{align*}
\tau\mathcal{L}(\check{H})&=\tau\mathcal{L}(\hat{H})-A\tau\epsilon\psi+\tau\epsilon e^{A\tau}\Delta f\\
&\leq \check{H}+k+\epsilon\psi - 2\tau\nabla_i\check{H}\nabla_i v-2\tau\epsilon e^{A\tau}\nabla_i f\nabla_i v-\frac{1}{2n}(\check{H}+k+\epsilon\psi+\tau y)^2-(A-C)\tau\epsilon\psi\\
&\leq \check{H}+k+\epsilon\psi- 2\tau\nabla_i\check{H}\nabla_i v+n\tau\epsilon e^{A\tau}|\nabla f|^2 + \frac{1}{n}\tau\epsilon e^{A\tau}|\nabla v|^2\\
&\quad  -\frac{1}{2n}(\check{H}+k+\epsilon\psi+\tau y)^2-(A-C)\tau\epsilon\psi.
\end{align*}

Assume that at time $\tau_0$ and some point $x_0$, $\tilde{H}$ reaches $0$ for the first time, then
\begin{align*}
0\leq\tau_0\mathcal{L}(\check{H})&\leq k+\epsilon\psi+n\tau_0\epsilon e^{A\tau_0}|\nabla f|^2 + \frac{1}{n}\tau_0\epsilon e^{A\tau_0}|\nabla v|^2-\frac{1}{2n}(k+\epsilon\psi+\tau_0 y)^2-(A-C)\tau_0\epsilon\psi\\
&\leq k-\frac{k^2}{2n}+\epsilon\psi-\frac{k}{n}\epsilon\psi+\frac{1}{n}\tau_0\epsilon\psi y - \frac{1}{n}(k+\epsilon\psi)\tau_0y-\frac{1}{2n}\tau_0^2y^2-[A-(n+1)C]\tau_0\epsilon\psi.
\end{align*}

Thus, if we pick $k=2n$ and $A>(n+1)C$, there is a contradiction. It means that
$$\hat{H}-\epsilon\psi<0$$
for any $\epsilon>0$.

Letting $\epsilon\rightarrow 0$ yields $\hat{H}\leq0$, i.e.,
$$2\Delta v-|\nabla v|^2+R\leq \frac{2n}{\tau}.$$
\end{proof}

\begin{remark}
Theorem \ref{theorem conjugate heat equation} generalizes the Li-Yau type estimate \eqref{kuang-zhang} obtained by X. Cao \cite{xCao2008} and Kuang-Zhang \cite{KuZh2008} to the complete noncompact setting.
\end{remark}

\bigskip

\section{Li-Yau-Hamilton estimates for the heat equation}

In this last section, we apply our arguments in Section 2 to the heat equation with potential under the Ricci flow and derive an Li-Yau type estimate for positive solutions on complete noncompact manifolds, extening the previous work of  X. Cao and R. Hamilton \cite{CaHa2009} in the compact case.
 More precisely, we consider the following heat equation with a linear forcing term,
\begin{equation}\label{heat equation}
\frac{\partial u}{\partial t}=\Delta u + Ru,
\end{equation}
coupled with the Ricci flow (1.5).

For any positive solution $u$ to \eqref{heat equation}, let  $v=-\ln u$. Then, it is easy to check that
%$$\frac{\partial v}{\partial t}=-\frac{1}{u}\frac{\partial u}{\partial t},$$
%$$\nabla v=-\frac{1}{u}\nabla u,$$
%$$\Delta v=-\frac{1}{u}\Delta u+\frac{1}{u^2}|\nabla u|^2$$
$$\frac{\partial v}{\partial t}=\Delta v-|\nabla v|^2-R.$$

Let
\begin{align*}
F&=|\nabla v|^2 + bv_t+ cR\\
&=b\Delta v+(1-b)|\nabla v|^2+(c-b)R
\end{align*}

and
$$\mathcal{L}=\frac{\partial}{\partial t}-\Delta.$$

Similarly to Proposition 2.1 in \cite{CaZh2014}, one can show
\begin{proposition}
\begin{align*}
\mathcal{L}(F)&= -2\nabla_i F\nabla_i v - 2|\nabla^2 v|^2 +2bR_{ij}\nabla_i\nabla_j v+2c|Rc|^2\\
&\quad\ -b\frac{\partial R}{\partial t}+2(c-1)\nabla_i R\nabla_i v - 2bR_{ij}\nabla_i v\nabla_j v.
\end{align*}
\end{proposition}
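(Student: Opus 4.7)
The plan is to compute $\mathcal{L}(F)=\partial_tF-\Delta F$ directly using the evolution equations under the Ricci flow. First I would note that $u=e^{-v}$ together with \eqref{heat equation} gives $v_t=\Delta v-|\nabla v|^2-R$, which in particular justifies the alternative form $F=b\Delta v+(1-b)|\nabla v|^2+(c-b)R$. This second representation is the convenient one for differentiating, since each summand is a purely geometric quantity whose time and space derivatives can be computed individually and then reassembled.

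For the time derivative I would apply the Ricci flow identities $\partial_tg^{ij}=2R^{ij}$ and $\partial_tR=\Delta R+2|Rc|^2$, yielding
$$\partial_t|\nabla v|^2=2R_{ij}\nabla_iv\nabla_jv+2\nabla_iv\nabla_iv_t,\qquad \partial_t(\Delta v)=2R_{ij}\nabla_i\nabla_jv+\Delta v_t,$$
where the second identity uses that $g^{ij}\partial_t\Gamma^k_{ij}=0$ by the contracted second Bianchi identity. For the Laplacian I would invoke Bochner's formula
$$\Delta|\nabla v|^2=2|\nabla^2v|^2+2\nabla_iv\nabla_i(\Delta v)+2R_{ij}\nabla_iv\nabla_jv,$$
and spatially differentiate the PDE to obtain $\nabla_iv_t=\nabla_i\Delta v-\nabla_i|\nabla v|^2-\nabla_iR$. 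When these are substituted into $\partial_tF-\Delta F$, the $b\Delta^2v$ and $b\Delta v_t$ pieces merge, via $\Delta(v_t-\Delta v)=-\Delta|\nabla v|^2-\Delta R$, into further Bochner and curvature contributions; after one more application of Bochner, the $|\nabla^2v|^2$ coefficients collapse to $-2$ and the Ricci--Hessian cross term becomes $+2bR_{ij}\nabla_i\nabla_jv$, while the $R_{ij}\nabla_iv\nabla_jv$ coefficients collapse to $-2b$.

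The decisive step is the reorganisation of the remaining transport-type terms $\nabla_iv\nabla_i\Delta v$, $\nabla_iv\nabla_i|\nabla v|^2$ and $\nabla_iv\nabla_iR$, which end up with coefficients $-2b$, $-2(1-b)$ and $-2(1-b)$ respectively. Against the identity $\nabla_iF=b\nabla_i\Delta v+(1-b)\nabla_i|\nabla v|^2+(c-b)\nabla_iR$, these recombine as $-2\nabla_iF\nabla_iv+2(c-1)\nabla_iR\nabla_iv$, giving the advection term together with the residual gradient of scalar curvature that appears in the statement. The pure scalar-curvature pieces $-b\Delta R+(c-b)\partial_tR-c\Delta R$ collapse, via $\partial_tR=\Delta R+2|Rc|^2$, to $-b\partial_tR+2c|Rc|^2$, producing the last two summands in the claim.

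The whole computation is essentially the analogue, for the linear forcing heat equation, of Proposition~2.1 of \cite{CaZh2014}; no new ideas beyond Bochner's formula and the standard Ricci flow evolution identities are required. The only real obstacle is bookkeeping: one must keep the signs straight in the commutator $\nabla_i\Delta v$ vs.\ $\Delta\nabla_iv$ (implicitly used through Bochner), must track which $\nabla_iv\nabla_i\Delta v$ contributions arise from $\partial_t|\nabla v|^2$ versus from $\Delta|\nabla v|^2$, and must avoid double-substituting $\partial_tR$, so that precisely the coefficients $-b$ on $\partial_tR$ and $2c$ on $|Rc|^2$ survive in the final expression.
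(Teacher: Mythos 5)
Your approach is exactly the one the paper intends (the authors simply write that the identity follows ``similarly to Proposition~2.1 in \cite{CaZh2014}'' and omit the computation), and the computation goes through: the use of $\partial_t g^{ij}=2R^{ij}$, the vanishing of $g^{ij}\partial_t\Gamma^k_{ij}$ via the contracted second Bianchi identity, the Bochner formula, and the regrouping against $\nabla_iF = b\nabla_i\Delta v+(1-b)\nabla_i|\nabla v|^2+(c-b)\nabla_iR$ all match what a direct verification gives, and the coefficients $-2$, $+2b$, $-2b$, and $2(c-1)$ that you report are correct.

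The one slip is in your tally of the pure scalar-curvature terms. You wrote that these are $-b\Delta R+(c-b)\partial_tR-c\Delta R$; substituting $\Delta R=\partial_tR-2|Rc|^2$ into that expression gives $-2b\,\partial_tR+2(b+c)|Rc|^2$, not $-b\,\partial_tR+2c|Rc|^2$. The correct collection is: $-b\Delta R$ from expanding $b\Delta v_t$, $(c-b)\partial_tR$ from $\partial_tF$, and $-(c-b)\Delta R$ from $-\Delta F$, which totals $(c-b)\partial_tR-c\Delta R$; your expression double-counts the $-b\Delta R$ (equivalently, the last term should carry coefficient $-(c-b)$, not $-c$). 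With the correct tally one indeed gets $-b\,\partial_tR+2c|Rc|^2$, so the stated conclusion is right; the issue is only a transcription error in the intermediate bookkeeping, not a gap in the method.
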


By choosing appropriate $c$ and a cut-off function, it is not hard to drive the following local differential Harnack estimate.

%Now we consider positive solutions to\eqref{heat equation a} for $a=1$, i.e.,

\begin{proposition}\label{local harnack p=1}
Let $(M^n, g_{ij}(t))$, $t\in[0, T]$, be a complete solution to the Ricci flow with bounded curvature and nonnegative curvature operator. If $u$ is a positive solution to \eqref{heat equation}, then for $v=-\ln u$, $b>1$, any point $O\in M$ and any constant $R_0>0$, we have
\begin{align}\label{eq local harnack p=1 b<2}
&|\nabla v|^2+bv_t+(1-b)R\leq \frac{b^2n}{2}\left[\frac{1}{t}+16R_{max}+\frac{C_0}{R_0^2}+ C_1(R_{max},t)\right]
\end{align}
on $\coprod_{t\in(0, T]}B_t(O, R_0)\times\{t\}$ when $1<b\leq 2$, and
\begin{align}\label{eq local harnack p=1 b>2}
&|\nabla v|^2+bv_t+(1-b)R
\leq \frac{b^2n}{2}\left[\frac{1}{t}+(16+\frac{2(b-2)}{bn})R_{max}+\frac{C_0}{R_0^2}+C_1(R_{max},t)\right]
\end{align}
on $\coprod_{t\in(0, T]}B_t(O, R_0)\times\{t\}$ when $b\geq 2$. Here $C_0=8n+128+\frac{16b^2n}{b-1}$, $C_1(R_{max},t)=\sqrt{\frac{2}{b^2n}[\frac{(b-2)^2}{2}R_{max}^2+\frac{bR_{max}}{t}]}$, and $R_{max}=\sup_{M\times[0,T]}R$.
\end{proposition}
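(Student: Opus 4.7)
The plan is a cutoff maximum-principle argument on $\tilde F = t\phi F$, where $F = |\nabla v|^2 + bv_t + (1-b)R$ corresponds to choosing $c = 1-b$ in Proposition 4.1, and $\phi(x,t) = \eta(r(x,t)/(2R_0))$ is Hamilton's space-time cutoff with $|\phi_t| \leq 16R_{\max}$, $|\nabla\phi| \leq 8\phi^{1/2}/R_0$, and $\Delta\phi \geq -8n/R_0^2$. Since $\tilde F$ vanishes on the parabolic boundary of $\coprod_{t\in[0,T]}B_t(O,2R_0)\times\{t\}$, any positive value of $\tilde F$ in the interior must be attained at some interior maximum $(x_0,t_0)$, where $\phi\nabla F = -F\nabla\phi$ and $\mathcal{L}(\tilde F) \geq 0$.

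First I would simplify $\mathcal{L}(F)$ from Proposition 4.1 with $c = 1-b$. Hamilton's trace matrix Harnack (Theorem \ref{matrix harnack}), applied with $V = \nabla v$, bounds $-b[\partial_t R + 2\nabla R\cdot\nabla v + 2R_{ij}\nabla_iv\nabla_jv] \leq bR/t$. Completing the square in the Hessian yields $-2|\nabla^2 v - \tfrac{b}{2}Rc|^2 + \tfrac{(b-2)^2}{2}|Rc|^2$, and combined with $|Rc|^2 \leq R^2$ (nonnegative curvature operator) plus Newton's inequality $|\nabla^2 v - \tfrac{b}{2}Rc|^2 \geq \tfrac{1}{n}(\Delta v - \tfrac{b}{2}R)^2$, one obtains
$$\mathcal{L}(F) \leq -2\nabla F\cdot\nabla v - \tfrac{2}{n}\bigl(\Delta v - \tfrac{b}{2}R\bigr)^2 + \tfrac{(b-2)^2}{2}R_{\max}^2 + \tfrac{bR_{\max}}{t}.$$
The central algebraic identity
$$b\bigl(\Delta v - \tfrac{b}{2}R\bigr) = F + (b-1)|\nabla v|^2 - \tfrac{b^2-4b+2}{2}R$$
follows directly from $F = b\Delta v + (1-b)|\nabla v|^2 + (1-2b)R$; for $1 < b \leq 2$ all three summands on the right share the favorable sign once $F \geq 0$.

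Second, at the interior maximum, multiplying $\mathcal{L}(\tilde F) \geq 0$ by $t\phi$, substituting the above bounds, and writing $s = t\phi|\nabla v|^2 \geq 0$, I obtain
$$0 \leq \tilde F\,M_0 + \tfrac{16\tilde F\sqrt{ts}}{R_0} - \tfrac{2}{nb^2}\bigl[\tilde F + (b-1)s\bigr]^2 + N,$$
with $M_0 = 1 + 16R_{\max}t + \tfrac{(8n+128)t}{R_0^2}$ and $N = \tfrac{(b-2)^2 t^2 R_{\max}^2}{2} + btR_{\max}$. Applying $\sqrt{ts} \leq \tfrac{t}{2c} + \tfrac{cs}{2}$ with the calibrated choice $c = \tfrac{(b-1)R_0}{2nb^2}$, the $s$-piece of the gradient term exactly cancels the cross term $\tfrac{4(b-1)\tilde Fs}{nb^2}$ from expanding the square, while the $t$-piece contributes $\tfrac{16nb^2 t}{(b-1)R_0^2}\tilde F$ to $M_0$. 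Dropping the remaining nonpositive $-\tfrac{2(b-1)^2 s^2}{nb^2}$ and completing the square in the resulting quadratic $\tfrac{2\tilde F^2}{nb^2} \leq M'\tilde F + N$ with $M' = M_0 + \tfrac{16nb^2 t}{(b-1)R_0^2}$, one finds $\tilde F \leq \tfrac{nb^2 M'}{2} + \sqrt{\tfrac{nb^2 N}{2}}$; restricting to $B_t(O,R_0)$ (where $\phi \equiv 1$, so $\tilde F = tF$) gives \eqref{eq local harnack p=1 b<2} with $C_0 = 8n + 128 + \tfrac{16nb^2}{b-1}$.

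For $b \geq 2$, past $b = 2+\sqrt 2$ the coefficient $-(b^2-4b+2)/2$ switches sign, so the Newton expansion of $\bigl[F + (b-1)|\nabla v|^2 - \tfrac{b^2-4b+2}{2}R\bigr]^2$ retains a cross term $-F(b^2-4b+2)R$ that must be handled rather than discarded. After localization and the estimate $R \leq R_{\max}$, this contributes an additional $\tfrac{2(b-2)}{bn}R_{\max}\tilde F$ to the linear-in-$\tilde F$ coefficient $M'$, yielding the modified bound \eqref{eq local harnack p=1 b>2}. The remainder of the argument — gradient absorption and the final completion of the square — is identical to the $1<b\le 2$ case.

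The main obstacle is selecting the Young's-inequality parameter $c = \tfrac{(b-1)R_0}{2nb^2}$ so that the nontrivial $\tfrac{16nb^2}{b-1}$ portion of $C_0$ arises by exact cancellation with the $(b-1)s$ piece coming out of Newton's inequality; this is precisely what makes the estimate degenerate as $b \to 1$. Tracking the sign of $b^2-4b+2$ in the $b \geq 2$ regime is the only subtlety beyond the core mechanism.
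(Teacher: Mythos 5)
Your strategy is the same cutoff maximum-principle argument as the paper: form $\tilde F = t\phi F$ with Hamilton's space--time cutoff, control the Harnack remainder via Theorem \ref{matrix harnack}, apply Newton's inequality to the completed square in the Hessian, absorb the gradient term by Young's inequality calibrated against the $(b-1)$ cross term, and read off the quadratic bound. The one organizational difference is your choice of expansion variable: you write $b(\Delta v - \tfrac{b}{2}R) = F + (b-1)|\nabla v|^2 - \tfrac{b^2-4b+2}{2}R$, whereas the paper writes the same quantity as $F + (b-1)y + \tfrac{b(2-b)}{2}R$ with $y = |\nabla v|^2 + R$. Since the cross term with $F$ then carries coefficient $b(2-b)$, the paper's sign split happens exactly at $b=2$ (matching the two regimes in the Proposition), while in your decomposition the $R$-coefficient $-\tfrac{b^2-4b+2}{2}$ does not switch sign until $b = 2 + \sqrt 2$, and the $F\cdot R$ cross term then contributes $\tfrac{2(b^2-4b+2)}{nb^2}R_{\max}$ to $M'$ rather than the $\tfrac{2(b-2)}{bn}R_{\max}$ you claim. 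Since $\tfrac{2(b^2-4b+2)}{nb^2} < \tfrac{2(b-2)}{bn}$ for all $b>1$, your version actually yields a slightly sharper intermediate estimate, so \eqref{eq local harnack p=1 b>2} still follows; but the coefficient you quote is the paper's, not the one your decomposition produces, and you should correct the bookkeeping (or, more simply, switch to $y = |\nabla v|^2 + R$ as the paper does, which makes the regime boundary $b=2$ and the stated constants come out exactly). For $2 \le b < 2+\sqrt 2$ your argument already gives \eqref{eq local harnack p=1 b<2}, which implies \eqref{eq local harnack p=1 b>2} a fortiori, so there is no logical gap in that range either. Everything else — the identity $b(\Delta v - \tfrac{b}{2}R) = F + (b-1)|\nabla v|^2 - \tfrac{b^2-4b+2}{2}R$, the Young cancellation with $c = \tfrac{(b-1)R_0}{2nb^2}$ producing $C_0 = 8n+128+\tfrac{16nb^2}{b-1}$, and the AM-GM pairing of the $|\nabla v|^2 R$ cross term with the $|\nabla v|^4$ term giving the $\tfrac{(b-2)^2}{2}R_{\max}^2$ in $N$ — checks out.
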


\begin{proof} By taking $c=1-b$, we have
$$F=|\nabla v|^2+bv_t+(1-b)R=y+bz,$$
where $y=|\nabla v|^2 + R$ and $z=v_t-R$. Hence, by Proposition 4.1,
\begin{align}\label{eq5}
\mathcal{L}(F)%&=-2\nabla_i F\nabla_i v -2|\nabla^2 v|^2+2bR_{ij}\nabla_i\nabla_j v + 2(1-b)|Rc|^2\\
%&\quad\ -b[R_t + 2\nabla_i R\nabla_i v + 2R_{ij}\nabla_i v\nabla_j v]\\
&= -2\nabla_i F\nabla_i v - 2|\nabla^2 v-\frac{b}{2}Rc|^2+\frac{(b-2)^2}{2}|Rc|^2\\
&\quad\ -b[R_t + 2\nabla_i R\nabla_i v + 2R_{ij}\nabla_i v\nabla_j v].
\end{align}
Let $\phi(x,t)$ be the cut-off function in \eqref{eq phi}. Then, it follows that
\begin{align*}
t\phi\mathcal{L}(t\phi F)%&=t\phi^2F+t^2\phi F\phi_t-t^2\phi F\Delta\phi - 2t^2\phi\nabla_i\phi\nabla_i F + t^2\phi^2\mathcal{L}(F)\\
&=\phi(\tilde{y}+b\tilde{z})+t\phi_t(\tilde{y}+b\tilde{z})-t\Delta\phi(\tilde{y}+b\tilde{z}) -2t^2\phi\nabla_i\phi\nabla_i F -2t^2\phi^2\nabla_i F\nabla_i v\\
&\quad\ - 2t^2\phi^2|\nabla^2 v-\frac{b}{2}Rc|^2+\frac{(b-2)^2}{2}t^2\phi^2|Rc|^2-bt\phi^2Q+bt\phi^2R,
\end{align*}
where $Q$ is the quantity in \eqref{eq Q}, $\tilde{y}=t\phi y$, and $\tilde{z}=t\phi z$.

Recall that
$$|\frac{\partial \phi}{\partial t}|\leq 16R_{max},\qquad \Delta\phi\geq-\frac{8n}{R^2_0}, \qquad |\nabla \phi|\leq \frac{8}{R_0}\phi^{\frac{1}{2}}.$$

If $t\phi F\leq 0$ in $\coprod_{t\in[0, T]}B_t(O, 2R_0)\times\{t\}$, then we are done. Otherwise, since $t\phi F=0$ on the parabolic boundary of $\coprod_{t\in[0, T]}B_t(O, 2R_0)\times\{t\}$, we may assume that $t\phi F$ achieves a positive maximum for the first time at some $t_0>0$ and some interior point $x_0$. Thus at $(x_0, t_0)$, we have
$$\tilde{y}+b\tilde{z}=t_0\phi F>0, \qquad
F\nabla\phi=-\phi\nabla F,\qquad \textrm{and}\qquad \mathcal{L}(t\phi F)(x_0,t_0)\geq0.$$

Since $$-2t_0^2\phi^2\nabla_i F\nabla_i v=2t_0^2\phi F\nabla_i\phi\nabla_iv\leq \frac{16}{R_0}t_0^{\frac{1}{2}}\tilde{y}^{\frac{1}{2}}(\tilde{y}+b\tilde{z}),$$
we have
\begin{align*}
0&\leq t_0\phi\mathcal{L}(t\phi F)\\
&\leq (\tilde{y}+b\tilde{z})+16t_0R_{max}(\tilde{y}+b\tilde{z})+\frac{(8n+128)t_0}{R_0^2}(\tilde{y}+b\tilde{z}) +\frac{16}{R_0}t_0^{\frac{1}{2}}\tilde{y}^{\frac{1}{2}}(\tilde{y}+b\tilde{z})\\
&\quad\ - \frac{2}{n}t_0^2\phi^2(y+z+\frac{2-b}{2}R)^2+\frac{(b-2)^2}{2}t_0^2\phi^2R^2+bt_0\phi^2R\\
%&\leq (\tilde{y}+b\tilde{z})+16t_0R_{max}(\tilde{y}+b\tilde{z})+\frac{(8n+128)t_0}{R_0^2}(\tilde{y}+b\tilde{z}) +\frac{16}{R_0}t_0^{\frac{1}{2}}\tilde{y}^{\frac{1}{2}}(\tilde{y}+b\tilde{z})\\
%&\quad\ - \frac{2}{b^2n}(\tilde{y}+b\tilde{z}+(b-1)\tilde{y}+\frac{b(2-b)}{2}t_0\phi R)^2+\frac{(b-2)^2}{2}t_0^2\phi^2R^2+bt_0\phi R\\
&\leq (\tilde{y}+b\tilde{z}) \left[1+16t_0R_{max}-\frac{4(b-1)}{b^2n}\tilde{y}+\frac{16}{R_0}t_0^{\frac{1}{2}}\tilde{y}^{\frac{1}{2}}+\frac{(8n+128)t_0}{R_0^2}\right] -\frac{2(b-1)^2}{b^2n}\tilde{y}^2\\
&\quad\ - \frac{2}{b^2n}(\tilde{y}+b\tilde{z})^2 +\frac{(b-2)^2(n-1)}{2n}t_0^2\phi^2R^2 -\frac{2(b-1)(2-b)}{bn}\tilde{y}t_0\phi R\\
&\quad\ - \frac{2(2-b)}{bn}t_0\phi R(\tilde{y}+b\tilde{z})+bt_0\phi R.
\end{align*}

If $1<b\leq 2$, we have
\begin{align*}
0&\leq -\frac{2}{b^2n}(\tilde{y}+b\tilde{z})^2+\left[1+16t_0R_{max}-\frac{4(b-1)}{b^2n}\tilde{y}+\frac{16}{R_0}t_0^{\frac{1}{2}}\tilde{y}^{\frac{1}{2}}+\frac{(8n+128)t_0}{R_0^2}\right](\tilde{y}+b\tilde{z})\\
&\quad\ +\frac{(b-2)^2(n-1)}{2n}t_0^2R_{max}^2+bt_0R_{max}\\
&\leq -\frac{2}{b^2n}(\tilde{y}+b\tilde{z})^2+\left[1+16t_0R_{max}+(8n+128+\frac{16b^2n}{b-1})\frac{t_0}{R_0^2}\right](\tilde{y}+b\tilde{z})\\
&\quad\ +\frac{(b-2)^2(n-1)}{2n}t_0^2R_{max}^2+bt_0R_{max},
\end{align*}
which implies \eqref{eq local harnack p=1 b<2}.

If $b\geq 2$, then
\begin{align*}
0&\leq (\tilde{y}+b\tilde{z})+16t_0R_{max}(\tilde{y}+b\tilde{z})+\frac{(8n+128)t_0}{R_0^2}(\tilde{y}+b\tilde{z}) +\frac{16}{R_0}t_0^{\frac{1}{2}}\tilde{y}^{\frac{1}{2}}(\tilde{y}+b\tilde{z})-\frac{2(b-1)^2}{b^2n}\tilde{y}^2\\
&\quad\ - \frac{2}{b^2n}(\tilde{y}+b\tilde{z})^2 +\frac{(b-2)^2(n-1)}{2n}t_0^2\phi^2R^2 -\frac{4(b-1)}{b^2n}\tilde{y}(\tilde{y}+b\tilde{z})+\frac{2(b-1)(b-2)}{bn}\tilde{y}t_0\phi R\\
&\quad\ + \frac{2(b-2)}{bn}t_0\phi R(\tilde{y}+b\tilde{z})+bt_0\phi R\\
%&\leq -\frac{2}{b^2n}(\tilde{y}+b\tilde{z})^2+\left[1+(16+\frac{2(b-2)}{bn})t_0R_{max}-\frac{4(b-1)}{b^2n}\tilde{y}+\frac{16}{R_0}t_0^{\frac{1}{2}}\tilde{y}^{\frac{1}{2}}+\frac{(8n+128)t_0}{R_0^2}\right](\tilde{y}+b\tilde{z})\\
%&\quad\ +\frac{(b-2)^2}{2}t_0^2R_{max}^2+bt_0R_{max}\\
&\leq -\frac{2}{b^2n}(\tilde{y}+b\tilde{z})^2+\left[1+(16+\frac{2(b-2)}{bn})t_0R_{max}+(8n+128+\frac{16b^2n}{b-1})\frac{t_0}{R_0^2}\right](\tilde{y}+b\tilde{z})\\
&\quad\ +\frac{(b-2)^2}{2}t_0^2R_{max}^2+bt_0R_{max},
\end{align*}
which leads to \eqref{eq local harnack p=1 b>2}. This finishes the proof.
\end{proof}

Letting $R_0\rightarrow \infty$, we have
\begin{corollary}\label{noncompact harnack p=1}
Let $(M^n, g_{ij}(t))$, $t\in[0, T]$, be a complete solution to the Ricci flow with bounded curvature and nonnegative curvature operator. If $u$ is a positive solution to \eqref{heat equation}, then for $v=-\ln u$, we have
\begin{align*}
&|\nabla v|^2+bv_t+(1-b)R\leq \frac{b^2n}{2}\left[\frac{1}{t}+16R_{max}+C_1(R_{max},t)\right]
\end{align*}
on $M\times(0, T]$ when $1<b\leq 2$, and
\begin{align*}
&|\nabla v|^2+bv_t+(1-b)R
\leq \frac{b^2n}{2}\left[\frac{1}{t}+(16+\frac{2(b-2)}{bn})R_{max}+C_1(R_{max},t)\right]
\end{align*}
on $M\times(0,T]$ when $b\geq 2$. Here $R_{max}$ and $C_1(R_{max},t)$ are the same constants as in Proposition \ref{local harnack p=1}.
\end{corollary}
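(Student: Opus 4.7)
The plan is to obtain Corollary 4.4 as a direct consequence of Proposition 4.3 by passing to the limit $R_0 \to \infty$. Fix an arbitrary point $(x_0, t_0) \in M \times (0, T]$; the goal is to establish the stated pointwise bound there, which is $R_0$-free.

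First I would argue that for sufficiently large $R_0$, the point $x_0$ lies well inside the cutoff's support for every $t \in [0, T]$. Since $(M, g(t))$ has bounded curvature on $M \times [0, T]$ (say $|Rc| \le K$), Hamilton's standard distance-distortion estimate under the Ricci flow gives a uniform bound
\[
\sup_{t \in [0, T]} d_t(O, x_0) \le D(x_0) < \infty.
\]
Consequently, if $R_0 > D(x_0)$, then $r(x_0, t) \le R_0$ for all $t \in [0, T]$, so the cutoff $\phi$ constructed in \eqref{eq phi} satisfies $\phi(x_0, t) = 1$ throughout $[0, T]$.

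Next, with $R_0$ chosen this way, Proposition \ref{local harnack p=1} applies at the point $(x_0, t_0)$ and yields exactly the stated inequality but with the additional error term $\frac{b^2 n}{2} \cdot \frac{C_0}{R_0^2}$ on the right-hand side. Crucially, the constants $R_{\max}$, $C_1(R_{\max}, t)$, and $C_0$ depend only on the geometric data and on $n, b, p$, but are independent of $R_0$. Therefore I can let $R_0 \to \infty$, whereupon the $C_0/R_0^2$ contribution vanishes and we obtain the desired inequality at $(x_0, t_0)$. Since $(x_0, t_0) \in M \times (0, T]$ was arbitrary, the bound holds globally, separately in the two regimes $1 < b \le 2$ and $b \ge 2$.

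There is really no conceptual obstacle here: all the work has already been done in Proposition \ref{local harnack p=1}, and the corollary amounts to a routine limit argument. The only subtlety worth spelling out is the distance-distortion estimate, which guarantees that the fixed cutoff construction eventually covers any chosen point with $\phi = 1$; this is where the bounded curvature hypothesis of the corollary is used in the limiting step, exactly as in the parallel Corollary \ref{noncompact harnack p<1 b<2} and Corollary \ref{noncompact harnack p<1 b>2} of Section 2.
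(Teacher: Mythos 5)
Your proposal is correct and matches the paper's own (one-line) argument, which is simply ``Letting $R_0\rightarrow \infty$ in Proposition~\ref{local harnack p=1}''; you have just spelled out the routine justification (fixing a point, using bounded curvature so the cutoff eventually equals $1$ there, and observing that the only $R_0$-dependent term $C_0/R_0^2$ vanishes in the limit).
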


Again, the above result can be refined as

\begin{theorem}
Let $(M^n, g_{ij}(t))$, $t\in[0, T]$, be a complete solution to the Ricci flow with bounded curvature and nonnegative curvature operator. If $u$ is a positive solution to \eqref{heat equation}, then for $v=-\ln u$, we have
\begin{align*}
&|\nabla v|^2+bv_t+(1-b)R-\frac{b(2-b)\sqrt{n-1}}{2}R_{max}-\frac{b^2n}{2t}\leq 0
\end{align*}
on $M\times(0, T]$ when $1<b\leq 2$, and
\begin{align*}
&|\nabla v|^2+bv_t+(1-b)R-b(b-2)[1+\frac{\sqrt{n-1}}{2}]R_{max}-\frac{b^2n}{2t}
\leq0
\end{align*}
on $M\times(0,T]$ when $b\geq 2$. Here  $R_{max}=\sup_{M\times[0,T]}R$.
\end{theorem}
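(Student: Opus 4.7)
The plan is to follow the same refinement strategy as in Theorem \ref{refined noncompact harnack p<1 b<2} and Theorem \ref{refined noncompact harnack p<1 b>2}, taking Corollary \ref{noncompact harnack p=1} as the ``crude'' global Harnack input and sharpening it via Hamilton's distance-like function (Lemma \ref{distance function}) inside a noncompact maximum principle. With $F=|\nabla v|^{2}+bv_{t}+(1-b)R$, I set
\[
H\;:=\;tF-tKR_{\max}-\tfrac{b^{2}n}{2},
\]
where $K$ is the constant appearing in the statement, so that the goal becomes $H\leq 0$ on $M\times(0,T]$. As a sanity check, $b=2$ gives $K=0$ and the bound reduces to the Cao--Hamilton estimate \eqref{Cao-Hamilton}.

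\smallskip

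First I would compute $\mathcal{L}(H)=F+t\mathcal{L}(F)-KR_{\max}$ via Proposition 4.1 with $c=1-b$, identifying $-2|\nabla^{2}v|^{2}+2bR_{ij}\nabla_{i}\nabla_{j}v=-2|\nabla^{2}v-\tfrac{b}{2}Rc|^{2}+\tfrac{b^{2}}{2}|Rc|^{2}$ and combining the remaining $|Rc|^{2}$ contributions into the familiar coefficient $\tfrac{(b-2)^{2}}{2}$. Hamilton's trace matrix Harnack (Theorem \ref{matrix harnack}) applied with $V_{i}=\nabla_{i}v$ bounds $R_{t}+2\nabla_{i}R\nabla_{i}v+2R_{ij}\nabla_{i}v\nabla_{j}v\geq -R/t$, so the unfavourable $-b[\,\cdot\,]$ term produces only a harmless $bR$ after multiplication by $t$. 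The Cauchy--Schwarz bound
\[
\bigl|\nabla^{2}v-\tfrac{b}{2}Rc\bigr|^{2}\;\geq\;\tfrac{1}{nb^{2}}\Bigl(F+(b-1)y+\tfrac{b(2-b)}{2}R\Bigr)^{2}, \qquad y:=|\nabla v|^{2}+R,
\]
is exactly the one carried out in the proof of Proposition \ref{local harnack p=1} and supplies the dominant negative square.

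\smallskip

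Next I would insert the spatial barrier $\tilde{H}:=H-\epsilon\psi$ with $\psi=e^{At}f$, $f$ as in Lemma \ref{distance function} and $A>0$ to be chosen. By Corollary \ref{noncompact harnack p=1}, $\tilde{H}<0$ near $t=0$ and outside a fixed compact set for every $t\in(0,T]$, so if $\tilde{H}$ ever reaches $0$ there is a first-contact interior point $(x_{0},t_{0})$ with $t_{0}>0$, where $H=\epsilon\psi>0$, $\nabla\tilde{H}=0$, and $\mathcal{L}(\tilde{H})(x_{0},t_{0})\leq 0$. Substituting $tF=H+tKR_{\max}+\tfrac{b^{2}n}{2}$ together with $\nabla F=\epsilon\nabla\psi/t$ into the evolution inequality, then using $|\nabla f|,\,|\nabla\nabla f|\leq C$ with Young's inequality to absorb the gradient-error term $-2t\epsilon\nabla_{i}\psi\,\nabla_{i}v$ into $\epsilon\psi$-contributions, I arrive at a pointwise algebraic relation in which the only $A$-dependence sits in a single term $At_{0}\epsilon\psi$.

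\smallskip

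The main obstacle is choosing $K$ so that the $R_{\max}$-content of this relation is nonnegative. The essential quadratic balance is
\[
\tfrac{2}{b^{2}n}\,K^{2}\;\geq\;\tfrac{(b-2)^{2}(n-1)}{2n}\,R_{\max}^{2},
\]
which forces the minimal choice $K\geq \tfrac{b|b-2|\sqrt{n-1}}{2}R_{\max}$; the factor $\tfrac{n-1}{n}$ emerges exactly as in Proposition \ref{local harnack p=1} from the cancellation between $-\tfrac{(2-b)^{2}}{2n}R^{2}$ (coming from expanding the Cauchy--Schwarz square) and $\tfrac{(b-2)^{2}}{2}R^{2}$. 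For $1<b\leq 2$ the accompanying linear cross-term $\tfrac{2(b-2)}{bn}R$ has a sign compatible with the inequality and can be dropped, so the minimal $K$ together with the linear constraint from the $bR$ piece already closes the argument, recovering $K=\tfrac{b(2-b)\sqrt{n-1}}{2}R_{\max}$. For $b\geq 2$ the same cross-term flips sign and must be absorbed into an additional linear-in-$R_{\max}$ part of $K$, inflating the constant to $K=b(b-2)\bigl[1+\tfrac{\sqrt{n-1}}{2}\bigr]R_{\max}$. Once $K$ is so fixed, taking $A$ larger than a suitable multiple of $\bar v_{\max}+R_{\max}$ makes the residual $(A-\mathrm{const})t_{0}\epsilon\psi$ strictly positive, contradicting $\mathcal{L}(\tilde{H})(x_{0},t_{0})\leq 0$. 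Hence $\tilde{H}\leq 0$ throughout $M\times(0,T]$; sending $\epsilon\to 0$ yields $H\leq 0$, which is the theorem.
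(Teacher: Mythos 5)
Your proposal is correct and follows essentially the same route as the paper's own proof: the same substitution $H=t(F-KR_{\max})-\tfrac{b^2n}{2}$, the same completion of the square $-2|\nabla^2 v-\tfrac{b}{2}Rc|^2+\tfrac{(b-2)^2}{2}|Rc|^2$ followed by Hamilton's trace Harnack and Cauchy--Schwarz, the same barrier $\tilde H=H-\epsilon e^{At}f$ with first-contact contradiction, and the same algebraic constraints on $K$ (the quadratic balance $\tfrac{2}{b^2n}K^2\geq\tfrac{(b-2)^2(n-1)}{2n}R_{\max}^2$ plus the linear ones, with the sign of the cross-term $\tfrac{2(b-2)}{bn}RK$ dictating the split between $1<b\leq 2$ and $b\geq 2$). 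The constants and the case distinction are reproduced exactly.
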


\begin{proof} Let $H=t(F-K)-d$ with $d$ and $K$ to be determined, from \eqref{eq5} one has
\begin{align*}
t\mathcal{L}(H)\leq &H+d-2t\nabla_iH\nabla_iv- \frac{2}{b^2n}\left[H+tK+d+(b-1)ty+\frac{b(2-b)}{2}tR\right]^2\\
\quad\ &+\frac{(b-2)^2}{2}t^2R^2+btR\\
= &H+d-2t\nabla_iH\nabla_iv-\frac{2}{b^2n}(H+tK+d)^2-\frac{2(b-1)^2}{b^2n}t^2y^2+\frac{(b-2)^2(n-1)}{2n}t^2R^2\\
\quad\ &-\frac{4(b-1)}{b^2n}ty(H+tK+d)-\frac{2(2-b)}{bn}tR(H+tK+d)-\frac{2(b-1)(2-b)}{bn}t^2Ry+btR.
\end{align*}

Let $\tilde{H}=H-\epsilon\psi$, where $\psi=e^{At}f$ with some constant $A$ to be determined and $f$ being the same function as in Lemma \ref{distance function}. From Corollary \ref{noncompact harnack p=1}, we know that $\tilde{H}<0$ at $t=0$ and outside a fixed compact subset of $M$ for $t\in(0,T]$. Suppose that $\tilde{H}=0$ for the first time at $t=t_0>0$ and some point $x_0\in M$. Then,  at $(x_0, t_0)$,  we have
$$H=\epsilon\psi>0,\qquad \nabla\tilde{H}=0,\qquad\textrm{and}\qquad \mathcal{L}(\tilde{H})\geq 0.$$

If $1<b\leq 2$ and choose $d=\frac{b^2n}{2}$, then
\begin{align*}
0&\leq t_0\mathcal{L}(\tilde{H})\\
&\leq  \epsilon\psi+d+2t_0\epsilon e^{At_0}\nabla_if\nabla_iv-\frac{2}{b^2n}(\epsilon\psi+t_0K+d)^2-\frac{2(b-1)^2}{b^2n}t_0^2y^2+\frac{(b-2)^2(n-1)}{2n}t_0^2R^2\\
&\quad\ -\frac{4(b-1)}{b^2n}t_0y(\epsilon\psi+t_0K+d)-\frac{2(2-b)}{bn}t_0R(\epsilon\psi+t_0K+d)-\frac{2(b-1)(2-b)}{bn}t^2_0Ry\\
&\quad\ +bt_0R-At_0\epsilon\psi+t_0\epsilon e^{At_0}\Delta f\\
%&\leq -(\frac{4d}{b^2n}-1)\epsilon\psi-(\frac{2d^2}{b^2n}-d)-\frac{2}{b^2n}(\epsilon\psi)^2-\frac{2}{b^2n}t_0^2K^2+Ct_0\epsilon\psi|\nabla v|-\frac{2(b-1)^2}{b^2n}t_0^2|\nabla v|^4\\
%&\quad\ + \frac{(b-2)^2(n-1)}{2n}t_0^2R_{max}^2-\left[\frac{4(b-1)d}{b^2n}+\frac{2(2-b)d}{bn}-b\right]t_0R-At_0\epsilon\psi+Ct_0\epsilon\psi\\
&\leq -\epsilon\psi-\frac{2}{b^2n}(\epsilon\psi)^2+\delta t_0\epsilon\psi|\nabla v|^2 + \frac{C}{4\delta} t_0\epsilon\psi-\frac{2(b-1)^2}{b^2n}t_0^2|\nabla v|^4\\
&\quad\ + \frac{(b-2)^2(n-1)}{2n}t_0^2R_{max}^2-\frac{2}{b^2n}t_0^2K^2-(2-b)(b-1)t_0R-At_0\epsilon\psi+Ct_0\epsilon\psi.
\end{align*}
Now if we choose $K=\frac{b(2-b)\sqrt{n-1}}{2}R_{max}$, and some $\delta>0$ such that

$$-\frac{2}{b^2n}(\epsilon\psi)^2+\delta t_0\epsilon\psi|\nabla v|^2 -\frac{2(b-1)^2}{b^2n}t_0^2|\nabla v|^4\leq 0,$$
then we deduce
\begin{align*}
0\leq [(1+\frac{1}{4\delta})C-A]t_0\epsilon\psi.
\end{align*}
By further choosing $A>(1+\frac{1}{4\delta})C$, we get a contradiction.

If $b\geq2$ and $d=\frac{b^2n}{2}$, then
\begin{align*}
0&\leq t_0\mathcal{L}(\tilde{H})\\
&\leq  \epsilon\psi+d+2t_0\epsilon e^{At_0}\nabla_if\nabla_iv-\frac{2}{b^2n}(\epsilon\psi+t_0K+d)^2-\frac{2(b-1)^2}{b^2n}t_0^2y^2+\frac{(b-2)^2(n-1)}{2n}t_0^2R^2\\
&\quad\  -\frac{4(b-1)}{b^2n}t_0y(\epsilon\psi+t_0K+d)+\frac{2(b-2)}{bn}t_0R(\epsilon\psi+t_0K+d)+\frac{2(b-1)(b-2)}{bn}t^2_0Ry\\
&\quad\ +bt_0R-At_0\epsilon\psi+t_0\epsilon e^{At_0}\Delta f\\
%&\leq -(\frac{4d}{b^2n}-1)\epsilon\psi-(\frac{2d^2}{b^2n}-d)-\frac{2}{b^2n}(\epsilon\psi)^2-\frac{2}{b^2n}t_0^2K^2-(\frac{4}{b^2n}K-\frac{2(b-2)}{bn}R)t_0\epsilon\psi-\frac{4d}{b^2n}t_0K\\
%&\quad\ +Ct_0\epsilon\psi|\nabla v|-\frac{2(b-1)^2}{b^2n}t_0^2|\nabla v|^4+ \frac{(b-2)^2(n-1)}{2n}t_0^2R^2+\frac{2(b-2)}{bn}t_0^2RK\\
%&\quad\ -\left[\frac{4(b-1)d}{b^2n}-\frac{2(b-2)d}{bn}-b\right]t_0R-At_0\epsilon\psi+Ct_0\epsilon\psi\\
%&\quad\ -[\frac{4(b-1)}{b^2n}K-\frac{2(b-1)(b-2)}{bn}R]t_0^2y\\
&\leq -\epsilon\psi-(\frac{4}{b^2n}K-\frac{2(b-2)}{bn}R)t_0\epsilon\psi-\frac{2}{b^2n}(\epsilon\psi)^2+\delta t\epsilon\psi|\nabla v|^2+ \frac{C}{4\delta}t_0\epsilon\psi-\frac{2(b-1)^2}{b^2n}t_0^2|\nabla v|^4\\
&\quad\ +\frac{(b-2)^2(n-1)}{2n}t_0^2R^2+\frac{2(b-2)}{bn}t_0^2RK-\frac{2}{b^2n}t_0^2K^2+(b-2)(b-1)t_0R-2t_0K\\
&\quad\ -At_0\epsilon\psi+Ct_0\epsilon\psi-\frac{2(b-1)}{bn}[\frac{2}{b}K-(b-2)R]t_0^2y.
\end{align*}

Let $K=b(b-2)(1+\frac{\sqrt{n-1}}{2})R_{max}$, then
$$\frac{2}{b}K-(b-2)R\geq0,$$
$$(b-2)(b-1)R-2K\leq 0,$$
and
$$\frac{(b-2)^2(n-1)}{2n}R^2+\frac{2(b-2)}{bn}RK-\frac{2}{b^2n}K^2\leq0.$$
If we further choose some appropriate $\delta>0$ so that
$$-\frac{2}{b^2n}(\epsilon\psi)^2+\delta t\epsilon\psi|\nabla v|^2-\frac{2(b-1)^2}{b^2n}t_0^2|\nabla v|^4\leq0,$$
then
\begin{align*}
0\leq t_0\mathcal{L}(\tilde{H})\leq (-A+C+\frac{C}{4\delta})t_0\epsilon\psi.
\end{align*}
By choosing $A>C+\frac{C}{4\delta}$, we also get a contradiction.

Therefore, in either case, it must be true that $\tilde{H}<0$. Letting $\epsilon\rightarrow 0$, we conclude that $H\leq 0$. This finishes the proof of Theorem 3.4.
\end{proof}

In the special case when $b=2$, we obtain Theorem \ref{theorem heat equation}.

%\begin{theorem}\label{noncompact harnack p=1 b=2}
%Let $(M^n, g_{ij}(t))$, $t\in[0, T)$, be a complete solution to the Ricci flow with bounded curvature and nonnegative curvature operator. If $u$ is a positive solution to the heat equation\eqref{heat equation}, then for $v=-\ln u$, we have
%\begin{align}\label{eq noncompact harnack2 p=1}
%&|\nabla v|^2+2v_t-R-\frac{2n}{t}\leq 0
%\end{align}
%on $M\times(0, T)$.
%\end{theorem}
\begin{remark}
When $M$ is compact, Theorem \ref{theorem heat equation} was first proved by X. Cao and R. Hamilton in \cite{CaHa2009}.
\end{remark}

\bibliography{FDE}
\bibliographystyle{acm}

\end{document}